\numberwithin{equation}{section}
\begin{document}

\newtheorem{theorem}{Theorem}[section]
\newtheorem{lemma}[theorem]{Lemma}
\newtheorem{proposition}[theorem]{Proposition}
\newtheorem{corollary}[theorem]{Corollary}

\theoremstyle{definition}
\newtheorem{definition}[theorem]{Definition}
\newtheorem{example}[theorem]{Example}

\theoremstyle{remark}
\newtheorem{remark}[theorem]{Remark}
\newtheorem*{ack}{Acknowledgments}

\newenvironment{magarray}[1]
{\renewcommand\arraystretch{#1}}
{\renewcommand\arraystretch{1}}

\newcommand{\mapor}[1]{\smash{\mathop{\longrightarrow}\limits^{#1}}}
\newcommand{\mapin}[1]{\smash{\mathop{\hookrightarrow}\limits^{#1}}}
\newcommand{\mapver}[1]{\Big\downarrow
\rlap{$\vcenter{\hbox{$\scriptstyle#1$}}$}}
\newcommand{\liminv}{\smash{\mathop{\lim}\limits_{\leftarrow}\,}}

\newcommand{\Set}{\mathbf{Set}}
\newcommand{\Art}{\mathbf{Art}}
\newcommand{\solose}{\Rightarrow}

\newcommand{\specif}[2]{\left\{#1\,\left|\, #2\right. \,\right\}}

\renewcommand{\bar}{\overline}
\newcommand{\de}{\partial}
\newcommand{\debar}{{\overline{\partial}}}
\newcommand{\per}{\!\cdot\!}
\newcommand{\Oh}{\mathcal{O}}
\newcommand{\sA}{\mathcal{A}}
\newcommand{\sB}{\mathcal{B}}
\newcommand{\sC}{\mathcal{C}}
\newcommand{\sD}{\mathcal{D}}
\newcommand{\sE}{\mathcal{E}}
\newcommand{\sF}{\mathcal{F}}\newcommand{\sG}{\mathcal{G}}
\newcommand{\sH}{\mathcal{H}}
\newcommand{\sI}{\mathcal{I}}
\newcommand{\sJ}{\mathcal{J}}
\newcommand{\sL}{\mathcal{L}}
\newcommand{\sM}{\mathcal{M}}
\newcommand{\sP}{\mathcal{P}}
\newcommand{\sU}{\mathcal{U}}
\newcommand{\sV}{\mathcal{V}}
\newcommand{\sX}{\mathcal{X}}
\newcommand{\sY}{\mathcal{Y}}
\newcommand{\sN}{\mathcal{N}}
\newcommand{\sZ}{\mathcal{Z}}

\newcommand{\Aut}{\operatorname{Aut}}
\newcommand{\Mor}{\operatorname{Mor}}
\newcommand{\Def}{\operatorname{Def}}
\newcommand{\Hom}{\operatorname{Hom}}
\newcommand{\Hilb}{\operatorname{Hilb}}
\newcommand{\HOM}{\operatorname{\mathcal H}\!\!om}
\newcommand{\DER}{\operatorname{\mathcal D}\!er}
\newcommand{\Spec}{\operatorname{Spec}}
\newcommand{\Der}{\operatorname{Der}}
\newcommand{\End}{{\operatorname{End}}}
\newcommand{\END}{\operatorname{\mathcal E}\!\!nd}
\newcommand{\Image}{\operatorname{Im}}
\newcommand{\coker}{\operatorname{coker}}
\newcommand{\tot}{\operatorname{tot}}
\newcommand{\Diff}{\operatorname{Diff}}
\newcommand{\ten}{\otimes}
\newcommand{\mA}{\mathfrak{m}_{A}}

\renewcommand{\Hat}[1]{\widehat{#1}}
\newcommand{\dual}{^{\vee}}
\newcommand{\desude}[2]{\dfrac{\de #1}{\de #2}}
\newcommand{\sK}{\mathcal{K}}
\newcommand{\A}{\mathbb{A}}
\newcommand{\N}{\mathbb{N}}
\newcommand{\R}{\mathbb{R}}
\newcommand{\Z}{\mathbb{Z}}
\renewcommand{\H}{\mathbb{H}}
\renewcommand{\L}{\mathbb{L}}
\newcommand{\proj}{\mathbb{P}}
\newcommand{\K}{\mathbb{K}\,}
\newcommand\C{\mathbb{C}}
\newcommand\T{\mathbb{T}}
\newcommand\Del{\operatorname{Del}}
\newcommand\Tot{\operatorname{Tot}}
\newcommand\Grpd{\mbox{\bf Grpd}}
\newcommand\rif{~\ref}

\newcommand\vr{``}
\newcommand{\rh}{\rightarrow}
\newcommand{\contr}{{\mspace{1mu}\lrcorner\mspace{1.5mu}}}

\newcommand{\bi}{\boldsymbol{i}}
\newcommand{\bl}{\boldsymbol{l}}

\newcommand{\MC}{\operatorname{MC}}
\newcommand{\Coder}{\operatorname{Coder}}
\newcommand{\TW}{\operatorname{TW}}
\newcommand{\id}{\operatorname{id}}
\newcommand{\ad}{\operatorname{ad}}
\newcommand{\cone}{\operatorname{C}}
\newcommand{\cylinder}{\operatorname{Cyl}}

\title{Nonabelian higher derived brackets}
\author{Ruggero Bandiera}
\address{\newline
Universit\`a degli studi di Roma La Sapienza,\hfill\newline
Dipartimento di Matematica \lq\lq Guido
Castelnuovo\rq\rq,\hfill\newline
P.le Aldo Moro 5,
I-00185 Roma, Italy.}
\email{bandiera@mat.uniroma1.it}

\date{September 9, 2013.}

\begin{abstract} Let $M$ be a graded Lie algebra, together with graded Lie subalgebras $L$ and $A$ such that as a graded space $M$ is the direct sum of $L$ and $A$, and $A$ is abelian. Let $D$ be a degree one derivation of $M$ squaring to zero and sending $L$ into itself, then Voronov's construction of higher derived brackets associates to $D$ a $L_\infty$ structure on $A[-1]$. It is known, and it follows from the results of this paper, that the resulting $L_\infty$ algebra is a weak model for the homotopy fiber of the inclusion of differential graded Lie algebras $i:(L,D,[\cdot,\cdot])\rh (M,D,[\cdot,\cdot])$. We prove this fact using homotopical transfer of $L_\infty$ structures, in this way we also extend Voronov's construction when the assumption $A$ abelian is dropped: the resulting formulas involve Bernoulli numbers. In the last section we consider some example and some further application.
\end{abstract}

\maketitle

\section{Introduction}

Let $i:(L,D,[\cdot,\cdot])\rh(M,D,[\cdot,\cdot])$ be the inclusion of a differential graded Lie subalgebra, recall that its homotopy fiber is the differential graded Lie algebra (dgla)
\[K_i=\{(l,m(t,dt))\in L\times M[t,dt]\,\,\,\mbox{s.t.}\,\,\,m(t,dt)_{|t=0}=0,\,m(t,dt)_{|t=1}=l\},\]
where $M[t,dt]$ is the dgla of polynomial forms on the line with coefficients in $M$. Let $A\subset M$ be a complement to $L$ in $M$ and $P:M\rh A$ the projection with kernel $L$, then $(A[-1],-PD)$ is a homotopy retract of $K_i$: via homotopy transfer of $L_\infty$ structures there is an induced $L_\infty$ structure on $A[-1]$, together with a homotopy fiber sequence $A[-1]\rh L\xrightarrow{i}M$ of $L_\infty$ algebras. In this paper we find explicit formulas under the additional assumption that $A\subset M$ is a graded Lie subalgebra of $M$, then the $L_\infty$ structure on $A[-1]$ is given (after d\'ecalage) by the family of degree one symmetric brackets $\Phi(D)_i:A^{\odot i}\rh A$, for $i\geq1$,
\[ \Phi(D)_i(a_1\odot\cdots\odot a_i)=\sum_{\sigma\in S_i}\varepsilon(\sigma)\sum_{k=1}^{i}\frac{B_{i-k}}{k!(i-k)!}\overbrace{[\cdots[}^{i-k}P([\cdots[Da_{\sigma(1)},a_{\sigma(2)}]\cdots,a_{\sigma(k)}]),a_{\sigma(k+1)}]\cdots,a_{\sigma(i)}] \]
where $S_i$ is the symmetric group, $\varepsilon(\sigma)=\varepsilon(\sigma;a_1,\ldots,a_i)$ is the Koszul sign and the $B_j$ are the Bernoulli numbers. The $L_\infty$ morphism $A[-1]\rh L$, modeling the projection $p_L:K_i\rh L$, is given (after d\'ecalage) in Taylor coefficients by
\begin{equation}\label{morfismo}A^{\odot i}\rh L[1]:a_1\odot\cdots\odot a_i\rh\frac{1}{i!}\sum_{\sigma\in S_i}\varepsilon(\sigma)P^\bot[\cdots[Da_{\sigma(1)},a_{\sigma(2)}]\cdots,a_{\sigma(i)}],\qquad\mbox{$i\geq1$,}\end{equation}
where $P^\bot:=\id_M-P:M\rh L$. Our first main result is that with these definitions
\begin{theorem}\label{ThoeremHDBvshomotopyfiber} If $A$ is a graded Lie subalgebra of $M$ then $(A[-1],\Phi(D)_1=PD,\ldots,\Phi(D)_i,\ldots)$ is a $L_\infty$ algebra and a weak model for the homotopy fiber $K_i$ in the homotopy category of $L_\infty$ algebras. Moreover, $A[-1]\rh L\xrightarrow{i}M$ is a homotopy fiber sequence of $L_\infty$ algebras (more precisely, a weak model for the sequence $K_i\xrightarrow{p_L} L\xrightarrow{i}M$).\end{theorem}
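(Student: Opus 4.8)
The plan is to realize the explicit formulas as the output of the homotopy transfer theorem (HTT) applied to an explicit contraction of the homotopy fibre $K_i$ onto $A[-1]$, and then to read off both the weak equivalence with $K_i$ and the fibre sequence from the transfer data. The point is that once the transferred $L_\infty$ brackets are computed and shown to coincide with the $\Phi(D)_i$, the assertion that the $\Phi(D)_i$ satisfy the $L_\infty$ relations and the assertion that $(A[-1],\Phi(D)_\bullet)$ is a weak model both follow for free, the former because the transferred brackets always satisfy them and the latter because the HTT morphism is a quasi-isomorphism.

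First I would make the contraction explicit. Writing an element of $K_i$ as $m=p(t)+q(t)\,dt$ with $p(0)=0$ and $p(1)\in L$ (the component $l$ being recovered as $p(1)$), the differential is the total one $Dm+\dot m\,dt$. Combining the Dupont/integration homotopy on the polynomial-form direction $\K[t,dt]$ with the projection $P:M\rh A$ yields a special deformation retract
\[ \pi:K_i\rh A[-1],\qquad \iota:A[-1]\rh K_i,\qquad h:K_i\rh K_i, \]
with $\iota(a)=a\,dt$, $\pi(m)=\int_0^1 P\,q(t)\,dt$ and $h$ induced by integration along $t$; one checks the side conditions $\pi\iota=\id$, $\pi h=0$, $h\iota=0$, $h^2=0$. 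The induced unary bracket is $\pi\,d\,\iota(a)=\int_0^1P\,Da\,dt=PDa$, so that $\mu_1=\Phi(D)_1=PD$, matching up to the shift sign the complex $(A[-1],-PD)$ of the introduction; the only delicate point is that the $L$-component $P^\bot Da\,dt$ produced by $d\iota$ is annihilated by $\pi$ and absorbed by $h$ in $dh+hd$.

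Next I would run the HTT. As $K_i$ is a dgla, hence a strict $L_\infty$ algebra, transfer produces an $L_\infty$ structure $\{\mu_i\}$ on $A[-1]$ together with an $L_\infty$ quasi-isomorphism $A[-1]\rh K_i$ whose linear part is $\iota$; by construction $(A[-1],\{\mu_i\})$ is weakly equivalent to $K_i$, which abstractly already settles the weak-model claim. The transferred brackets are the usual sums over rooted binary trees, with $\iota$ at the leaves, $[\cdot,\cdot]$ at the internal vertices, $h$ on the internal edges and $\pi$ at the root. The heart of the argument, and the step I expect to be the main obstacle, is the explicit evaluation of these tree sums and the identification $\mu_i=\Phi(D)_i$ after d\'ecalage: the integration homotopy turns each tree into an iterated integral over a simplex $\{0\le t_1\le\cdots\le t_k\le1\}$, and summing over the tree shapes while using the generating-function identity $\frac{z}{e^z-1}=\sum_{n\ge0}B_n\frac{z^n}{n!}$ should collapse the combinatorics into exactly the coefficient $\frac{B_{i-k}}{k!(i-k)!}$ and the bracketing pattern in which $k$ iterated brackets are formed, projected by $P$, and then bracketed $i-k$ further times. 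Organizing the Koszul signs and the splitting of the $i$ inputs into the ``inner'' part (inside $P$) and the ``outer'' part is the delicate bookkeeping, which I would handle by a generating-function / recursive argument rather than tree by tree.

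Finally I would identify the morphism and assemble the fibre sequence. Post-composing the HTT quasi-isomorphism $A[-1]\rh K_i$ with the strict dgla projection $p_L:K_i\rh L$, $p_L(l,m)=l$, gives an $L_\infty$ morphism $A[-1]\rh L$ modelling $p_L$ under the equivalence $A[-1]\simeq K_i$; its Taylor coefficients come from the same simplex integrals, now with the path read off at its $L$-component via $P^\bot$, and no Bernoulli weights survive, only the factor $\frac{1}{i!}$ from the single simplex integration, so they reduce to \eqref{morfismo}. Since $K_i\xrightarrow{p_L}L\xrightarrow{i}M$ is a homotopy fibre sequence of dglas by the very definition of $K_i$, replacing $K_i$ by the weakly equivalent $(A[-1],\{\mu_i\})$ and $p_L$ by the transferred morphism preserves this sequence in the homotopy category of $L_\infty$ algebras, yielding the asserted weak model for $K_i\xrightarrow{p_L}L\xrightarrow{i}M$ and completing the proof.
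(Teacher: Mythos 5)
Your overall strategy---realize the $\Phi(D)_i$ as the output of homotopy transfer applied to a contraction of (the desuspension of) $K_i$ onto $A[-1]$, and then read off both the weak-model claim and the fiber sequence---is in spirit the strategy of the paper, but your execution has two genuine gaps. The first is that your contraction data is not actually a contraction. Your $\iota(a)=a\,dt$ is not a chain map: $d\iota(a)$ and $\iota(PDa)$ differ by the term $(P^\bot Da)\,dt$, and this cannot be ``absorbed by $h$''. Indeed, since $dh+hd$ is automatically a chain map, the identity $dh+hd=\iota\pi-\id$ forces $\iota\pi$ to be a chain map, and it is not: writing $m=p(t)+q(t)\,dt$, one computes (using $PDP^\bot=0$, i.e.\ \eqref{PDP=PD}) that $d\,\iota\pi(m)-\iota\pi(dm)=\pm\bigl(P^\bot D\!\int_0^1\! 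Pq\,dt\bigr)dt$, which is nonzero in general. So no homotopy $h$ whatsoever can complete your $(\pi,\iota)$ to retraction data in the sense of Definition~\ref{def.contractiondata}, and Theorem~\ref{homtranstheorem} does not apply. The fix is to correct the embedding to $a\mapsto\bigl(P^\bot Da,\; t\cdot P^\bot Da + a\,dt\bigr)$, which is precisely the composite of the linear coefficient \eqref{eq:F_D1} with the Fiorenza--Manetti embedding \eqref{eq:retrcone1}; the homotopy must then also acquire a $P^\bot$-term and is no longer pure $t$-integration.

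The second gap is that the heart of the theorem---the identification of the transferred brackets with the $\Phi(D)_i$, including the coefficients $\frac{B_{i-k}}{k!(i-k)!}$---is only conjectured in your write-up (``should collapse''), and this is exactly the hardest step; the same applies to your claim that the composite with $p_L$ reduces to \eqref{morfismo}. The paper does not evaluate the full tree/iterated-integral sums over $M[t,dt]$ in one step. Instead it factors the transfer: the $t$-integration combinatorics, where the Bernoulli numbers first appear, is quoted from the Fiorenza--Manetti--Iacono cocylinder model (Theorem~\ref{mappingcocylinder}), and then a second, much smaller transfer from $s^{-1}\cone_i$ onto $A$ is computed inductively in Proposition~\ref{prop.HDB} via the recursion $pF=f_1+Kq_+F$, with the coefficients assembled by the elementary identity $\sum_{k=0}^{i-1}\binom{i}{k}B_k=0$; crucially, the hypothesis that $A$ is a Lie subalgebra is used there to show that certain correction terms lie in $\operatorname{Ker} P^\bot$. (The paper also enlarges the picture by $\Der(M/L)$ and obtains the $D$-case by twisting along the Maurer--Cartan element $s^{-1}D$, which yields Theorem~\ref{TheoremHDB} and the fiber-product generalization of Theorem~\ref{th:modelfiberproduct} at no extra cost.) Your closing step---deducing the fiber sequence from the commutative diagram comparing $A\rh\Sigma^{-1}L\rh\Sigma^{-1}M$ with the Fiorenza--Manetti sequence---is correct and matches the paper, but it only becomes available once the two gaps above are closed.
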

When $A\subset M$ is an abelian Lie subalgebra, then $\Phi(D)_i(a_1\odot\cdots\odot a_i)=P[\cdots[Da_1,a_2]\cdots,a_i]$, which are the higher derived brackets on $A$ associated to $D$ introduced by Th. Voronov in \cite{voronov2}: in this case the first part of the above Theorem is proved in \cite{voronov2}, Section 4.

Following \cite{voronov2}, Section 4, we also construct a $L_\infty$ structure on $M\times A[-1]$ and a diagram
\[\xymatrix{&M\times A[-1]\ar[rd]\ar@<-2pt>[dd]&\\A[-1]\ar[ru]\ar[rd]&&M\\&L\ar[ru]\ar@<-2pt>[uu]&}\]
such that: the lower sequence is the one from the previous theorem and the upper sequence is a $L_\infty$ extension (in the sense of \cite{Lazarev}, Definition 3.1) of base $M$ and fibre $A[-1]$, the vertical arrows are quasi inverses $L_\infty$ quasi isomorphisms, and finally the right and left triangles are separately commutative, and the right one can be regarded as the homotopy replacement of the inclusion $i$ by a fibration. It might be interesting to point out that we deduce Theorem~\ref{ThoeremHDBvshomotopyfiber} as a particular case of a more general result: namely, if $j:(N,D,[\cdot,\cdot])\rh(M,D,[\cdot,\cdot])$ is the inclusion of another sub dgla, then the $L_\infty$ structure on $M\times A[-1]$ restricts to one on $N\times A[-1]$, which gives a weak model for the homotopy fiber product
$N\times^h_M L$ (Definition \ref{def:homfibprod}) along the inclusions $j$ and $i$.

Let $\Phi(D)$ be the coderivation on the reduced symmetric coalgebra $\overline{SA}$ over $A$ associated to the brackets $\Phi(D)_i$ (under the isomorphism $\Coder(\overline{SA})\cong\Hom(\overline{SA},A)=\prod_{i\geq1}\Hom(A^{\odot i},A)$ given by corestriction), then by definition of $L_\infty$ algebra $\Phi(D)$ induces a dg coalgebra structure on $\overline{SA}$, thus also on the (non reduced) symmetric coalgebra $SA$. According to the classification of $L_\infty$ extensions from \cite{ChLaz2,methazambon,Lazarev}, the $L_\infty$ extension $A[-1]\rh M\times A[-1]\rh M$ is classified by a $L_\infty$ morphism $\Phi:(M,D,[\cdot,\cdot])\rh(\Coder(SA),[\Phi(D),\cdot],[\cdot,\cdot])$, where the bracket on $\Coder(SA)$ is the usual (Nijenhuis-Richardson) bracket of coderivations: we show that $\Phi$ is a strict morphism of dglas, moreover, when $A\subset M$ is a graded Lie subalgebra we obtain explicit formulas for the Taylor coefficients $\Phi(m)_i:A^{\odot i}\rh A$, and these are again given by higher derived brackets
\[ \Phi(m)_i(a_1\odot\cdots\odot a_i)=\sum_{\sigma\in S_i}\varepsilon(\sigma)\sum_{k=0}^{i}\frac{B_{i-k}}{k!(i-k)!}\overbrace{[\cdots[}^{i-k}P([\cdots[m,a_{\sigma(1)}]
\cdots,a_{\sigma(k)}]),a_{\sigma(k+1)}]\cdots,a_{\sigma(i)}]\]
for $i\geq1$, with moreover the 0-th bracket $\Phi(m)_0:A^{\odot0}=\K\rh A:1\rh Pm$. In fact, when $A\subset M$ is an abelian Lie subalgebra these are the higher derived brackets on $A$ associated to $m$ introduced by Voronov in \cite{voronov}.

This homotopical construction of higher derived brackets also implies the formal properties proved by Voronov in \cite{voronov,voronov2} by a direct computation. 
\begin{theorem}\label{TheoremHDB} Let $M$ be a graded Lie algebra, $L$ and $A$ graded Lie subalgebras such that $M=L\oplus A$ as graded spaces: for $m\in M$ (resp.: $D\in\Der(M/L)$, cf. the subsection on notations) we define the higher derived brackets $\Phi(m)_i:A^{\odot i}\rh A$, $i\geq0$, (resp.: $\Phi(D)_i:A^{\odot i}\rh A$, $i\geq1$) on $A$ associated to $m$ (resp.: $D$) by the above formulas, cf. Definition \ref{def.HDB}. We denote by $\Phi(m)\in\Coder(SA)$ (resp.: $\Phi(D)\in\Coder(\overline{SA})$ ) the corresponding coderivation. For every $D,D_k\in\Der(M/L)$, $m,m_k\in M$, $k=1,2$, the following identities hold:
\begin{equation}\label{HDB.id.1} [\Phi(m_1),\Phi(m_2)] = \Phi([m_1,m_2]), \end{equation}
\begin{equation}\label{HDB.id.2} [\Phi(D_1),\Phi(D_2)] = \Phi([D_1,D_2]), \end{equation}
\begin{equation}\label{HDB.id.3} [\Phi(D),\Phi(m)] = \Phi(Dm), \end{equation}
where the bracket in the left hand side is the usual (Nijenhuis-Richardson) bracket of coderivations.\end{theorem}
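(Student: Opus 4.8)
The plan is to deduce all three identities at once from the single fact, established above, that $\Phi$ is a strict morphism of dglas; in particular $\Phi$ preserves brackets, $\Phi([m_1,m_2])=[\Phi(m_1),\Phi(m_2)]$, and this bracket--preservation is what I intend to exploit. The crucial observation is that the construction of $\Phi$ and the proof of its bracket--preservation require only a graded Lie algebra together with a splitting into a graded Lie subalgebra and a graded Lie subalgebra $A$, so I am free to apply it not to $M$ but to a convenient enlargement. The three right--hand sides of $(\ref{HDB.id.1})$, $(\ref{HDB.id.2})$, $(\ref{HDB.id.3})$ are precisely the three types of bracket in the semidirect product graded Lie algebra
\[\widetilde M:=\Der(M/L)\ltimes M,\]
in which $\Der(M/L)$ acts on the ideal $M$ by $D\cdot m=Dm$: the bracket of two derivations is their graded commutator (again in $\Der(M/L)$), the bracket of $D$ with $m$ is $Dm$, and the bracket of $m_1$ with $m_2$ is the given one. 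Thus the three identities together assert exactly that $\Phi\colon\widetilde M\rh\Coder(SA)$ is a homomorphism of graded Lie algebras.

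To put $\widetilde M$ in the required form, set $\widetilde L:=\Der(M/L)\oplus L$ and keep the same $A$. Since every element of $\Der(M/L)$ preserves $L$, the space $\widetilde L$ is a graded Lie subalgebra, $A$ is a graded Lie subalgebra, and $\widetilde M=\widetilde L\oplus A$ as graded spaces, with projection $\widetilde P\colon\widetilde M\rh A$ that restricts to $P$ on $M$ and vanishes on $\Der(M/L)$. The first step is then to check that the higher derived brackets built from $(\widetilde M,\widetilde L,A)$ restrict to the ones in the statement. For $m\in M$ one recovers $\Phi(m)$ verbatim, since $\widetilde P|_M=P$ and all the brackets occurring stay inside the ideal $M$. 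For $D\in\Der(M/L)$ one recovers $\Phi(D)$: bracketing $D$ once on the right by $a\in A$ gives $[D,a]=Da\in M$, every further bracketing remains in $M$ and agrees with the one computed there, while $\widetilde P(D)=0$ kills the term in which $D$ has not been bracketed at all. This last point is exactly the source of the shift in Bernoulli indexing between the two displayed formulas (the sum beginning at $k=0$ for $m$ and at $k=1$ for $D$), and $\widetilde P(D)=0$ likewise explains why $\Phi(D)$ has vanishing arity--zero component, i.e. lies in $\Coder(\overline{SA})\subset\Coder(SA)$, so that all three brackets are computed inside the single Lie algebra $\Coder(SA)$.

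With this identification in hand the second step is immediate: the strict--morphism result, applied to the triple $(\widetilde M,\widetilde L,A)$, gives $[\Phi(x),\Phi(y)]=\Phi([x,y])$ for all $x,y\in\widetilde M$, and specializing $(x,y)$ to $(m_1,m_2)$, to $(D_1,D_2)$, and to $(D,m)$ yields $(\ref{HDB.id.1})$, $(\ref{HDB.id.2})$, and $(\ref{HDB.id.3})$ respectively. I expect the only genuine work to lie in the verification of the middle paragraph --- that the construction applied to $\widetilde M$ reproduces $\Phi(m)$ and $\Phi(D)$ on the nose, with the correct Koszul signs --- together with confirming that $(\widetilde M,\widetilde L,A)$ meets the hypotheses of the strict--morphism statement; for the latter one may simply equip $\widetilde M$ with the zero differential, so that the differential--compatibility half of ``strict morphism of dglas'' is vacuous and only its bracket--preserving half, which is all that is needed, carries content.
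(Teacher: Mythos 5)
Your reduction is correct as far as it goes, and it is worth noting that it shares its opening move with the paper: the paper's proof also begins by observing that Theorem \ref{TheoremHDB} is equivalent to the assertion that $\Phi:\Der(M/L)\rtimes M\rh\Coder(SA):(D,m)\rh\Phi(D)+\Phi(m)$ is a morphism of graded Lie algebras. Your middle paragraph checks out: $\widetilde M=\Der(M/L)\ltimes M$ with $\widetilde L=\Der(M/L)\oplus L$ and the same $A$ satisfies the standing hypotheses of Section \ref{sectionHDB} (closure of $\widetilde L$ under the bracket uses exactly $D(L)\subset L$), and the higher derived brackets of Definition \ref{def.HDB} computed in $(\widetilde M,\widetilde L,A)$ do restrict to $\Phi(m)$ for $m\in M$ and to $\Phi(D)$ for $D\in\Der(M/L)$; your observation that $\widetilde P(D)=0$ kills the $k=0$ term, accounting both for the index shift between the two formulas and for $\Phi(D)\in\Coder(\overline{SA})$, is exactly the mechanism by which Remark \ref{remark5.2} identifies $\Phi([l,\cdot])$ with $\Phi(l)$ for $l\in L$.

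The genuine gap is at the foundation. The ``strict-morphism statement'' you invoke --- bracket preservation \eqref{HDB.id.1} for an arbitrary graded Lie algebra split into two graded Lie subalgebras, with zero differential --- is not established anywhere prior to this theorem: it \emph{is} (part of) this theorem. The announcement in the introduction that $\Phi$ is a strict morphism of dglas is a forward reference to this very result, so citing it as ``established above'' makes your argument circular for \eqref{HDB.id.1}, and merely conditional for \eqref{HDB.id.2} and \eqref{HDB.id.3}. The paper supplies the missing substance directly, with no reduction among the three identities: by Proposition \ref{prop.HDB} (the homotopy-transfer computation, where the Bernoulli-number identities do the real work) the brackets $\Phi(D)_i$ and $\Phi(m)_i$ assemble into an $L_\infty[1]$ structure on $s^{-1}\Der(M/L)\times s^{-1}M\times A$, which is an $L_\infty[1]$ extension of base $\Sigma^{-1}(\Der(M/L)\rtimes M,0,[\cdot,\cdot])$ and fibre $(A,0)$; the explicit correspondence of Proposition \ref{th:Looextensions} then identifies the classifying morphism of this extension with $\Sigma^{-1}(\Phi)$, and since classifying morphisms are $L_\infty[1]$ morphisms, $\Phi$ preserves brackets on the whole semidirect product at once. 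To repair your write-up you would have to prove \eqref{HDB.id.1} in the required generality by some such argument (e.g.\ the extension-classification argument applied to the $M$-part) --- but note that once you run that argument you get it for all of $\Der(M/L)\rtimes M$ with no extra cost, and your bootstrapping step, though pleasant, becomes redundant.
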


The first few brackets are given by
\begin{eqnarray} && \Phi(D)_1(a)=PDa,\qquad\Phi(m)_1(a)=P[m,a]-\frac{1}{2}[Pm,a], \nonumber \\
&& \Phi(D)_2(a_1\odot a_2)=\sum_{\sigma\in S_2}\varepsilon(\sigma)\left(\frac{1}{2}P[Da_{\sigma(1)},a_{\sigma(2)}]-\frac{1}{2}[PDa_{\sigma(1)},a_{\sigma(2)}]\right), \nonumber \end{eqnarray}
\[ \Phi(m)_2(a_1\odot a_2)=\sum_{\sigma\in S_2}\varepsilon(\sigma)\left(\frac{1}{2}P[[m,a_{\sigma(1)}],a_{\sigma(2)}]-\frac{1}{2}[P[m,a_{\sigma(1)}],a_{\sigma(2)}] +\frac{1}{12}[[Pm,a_{\sigma(1)}],a_{\sigma(2)}]\right). \]

We remark that in general the brackets associated to $m$ are not the same as those associated to the inner derivation $[m,\cdot]$ (even when $[m,\cdot]\in\Der(M/L)$, that is, $m$ is in the normalizer of $L$ in $M$) unless $m\in L$, so the two constructions shouldn't be confused. As for the appearance of Bernoulli numbers this follows from the computations made by Fiorenza and Manetti in \cite{FMcone}, of which we make extensive use, cf. also the related computations in \cite{Bering,Getzler,Merkulov}. 

Both Theorem \ref{ThoeremHDBvshomotopyfiber} and Theorem \ref{TheoremHDB} will be proved in Section \ref{sectionHDB}: the proofs will depend on the construction, by Fiorenza, Manetti \cite{FMcone} and Iacono \cite{iacono0}, of  the $L_\infty$ mapping cocone and the $L_\infty$ mapping cocylinder of a morphism of dglas, which will be reviewed in Section \ref{section3}, the classification of $L_\infty$ extensions, which will be reviewed in Section \ref{section4}, and finally homotopical transfer of $L_\infty$ structures, which will be reviewed in Section \ref{section2}. In Remark \ref{rem.generalizedbrackets} we discuss how both theorems generalize when we further remove the hypothesis that $A\subset M$ is a graded Lie subalgebra (then $A$ is just supposed to be a complement of the graded Lie subalgebra $L$ in $M$).

A good source of motivation for the study of derived brackets and higher derived brackets comes from physics and differential geometry, cf. \cite{KosmannSch,voronov,voronov2,Bering}. Another list of examples from deformation theory \cite{FrZam1,FrZam2,Sch,CS08} can be better understood in light of our Theorem \ref{ThoeremHDBvshomotopyfiber}: in fact homotopy fibers naturally occur when considering semitrivial deformation problems (cf. \cite{Man,FMcone}). Consider for instance the case of deformations of a coisotropic submanifold of a Poisson manifold, where the usual approach \cite{Sch,CS08,FrZam1}, using higher derived brackets, can be paralleled with the more recent one in \cite{BM}, using homotopy fibers, cf. Example~\ref{ex.Poisson}.

In Section \ref{sec-examples} we consider some example and some further application, these include: a theorem of Chuang and Lazarev \cite{ChLaz}, stating that every $L_\infty$ algebra $(V[-1],Q)$ (that is, by definition, $Q\in\Coder^1(\overline{SV})$ and $[Q,Q]=0$) is weakly equivalent to the homotopy fiber of the inclusion of dglas
\[ i:(\Coder(\overline{SV}),[Q,\cdot],[\cdot,\cdot])\rh(\Coder(SV),[Q,\cdot],[\cdot,\cdot]),\]
which also implies a simple necessary and sufficient condition for a $L_\infty$ algebra to be homotopy abelian (a non trivial application of this last result is given in \cite{FoKBopLA}); the construction, via higher derived brackets, of a $L_\infty$ structure on the suspension of the negatively graded part of any dgla, already due to Getzler \cite{Getzler}, generalizing the well known case of quantum type dglas; the construction of a hierarchy of higher brackets (conjecturally the same as the one introduced by Bering~\cite{Bering}) associated to an operator over a unital associative graded algebra $A$, reducing to the usual higher Koszul brackets \cite{koszul} when $A$ is commutative; homotopy abelianity of the $L_\infty$ algebras associated to commutative $BV_\infty$ algebras satisfying the degeneration property (the author is grateful to Marco Manetti for pointing out and carefully explaining to him this example), which has been proved with different methods by Braun and Lazarev \cite{BrLaz}; the extension of the results of the author and Manetti \cite{BM} to the study of coisotropic deformations in the differentiable setting.

\begin{ack} It is a pleasure to thank my Ph.D. advisor Marco Manetti for guiding me through the writing of this paper, as well as Domenico Fiorenza for numerous and useful discussions. I finally thank Alessandro D'Andrea and the referee for some useful suggestions.\end{ack}

\subsection{Notations and conventions}

We work over a field $\K$ of characteristic zero, graded means $\Z$-graded, differentials raise the degree by one. If $V=\oplus_{i\in\Z}V^i$ is a graded space and $k\in\Z$ then $V[k]$ is the graded space defined by $V[k]^i=V^{i+k}$, we use a particular notation for the desuspension $s^{-1}V:=V[1]$ of $V$. We denote by $S(V)=\oplus_{i\geq0}V^{\odot i}$ (resp.: $\overline{S}(V)=\oplus_{i\geq1}V^{\odot i}$) the (resp.: reduced) symmetric coalgebra over $V$ (where $V^{\odot n}$ is the $n$-th symmetric tensor power of $V$, i.e., the space of \emph{coinvariants} of $V^{\otimes n}$ under the natural action of the symmetric group $S_n$, with the usual Koszul rule for twisting signs, $V^{\odot 0}=\K$) and by $\odot$ the symmetric tensor product; sometimes we simplify the notations to $SV$ and $\overline{SV}$. For the rest of our conventions we refer the reader to \cite{FMcone}, Section 1.

For a graded Lie algebra $M$ and a graded Lie subalgebra $L\subset M$ we denote by $\Der(M)$ the graded Lie algebra of derivations of $M$ and by $\Der(M/L)\subset\Der(M)$ the graded Lie subalgebra of derivations $D$ such that $D(L)\subset L$.

We will work in the category of $L_\infty[1]$ algebras and $L_\infty[1]$ morphisms between them: this is isomorphic to the usual category of $L_\infty$ algebras from \cite{LaSt,Getzler04} via the so called d\'ecalage isomorphisms, cf. for instance \cite{FMcone} (we recall that in this way $L_\infty$ algebra structures on a graded space $V$ correspond to $L_\infty[1]$ algebra structures on the desuspension $s^{-1}V$).

\section{Review of $L_{\infty}[1]$ algebras}\label{section2}

Let $p:SV\xrightarrow{} V^{\odot1}=V $ (resp.: $p:\overline{SV}\xrightarrow{} V$) denote the natural projection, then corestriction induces isomorphisms of graded spaces from the the graded Lie algebras of coderivations
\[ \Coder(SV)\xrightarrow{\cong} \Hom(SV,V)=\prod_{i\geq0}\Hom(V^{\odot i},V):Q\xrightarrow{\qquad}pQ  \]
\[ \Coder(\overline{SV})\xrightarrow{\cong} \Hom(\overline{SV},V)=\prod_{i\geq1}\Hom(V^{\odot i},V):Q\xrightarrow{\qquad}pQ  \]
If $Q\in\Coder(SV)$ and $pQ=q=(q_0,\ldots,q_i,\ldots)$ we call $q_i:V^{\odot i}\rh V$ the $i$-th Taylor coefficient of $Q$, similarly if $Q\in\Coder(\overline{SV})$. The inverse of the first isomorphism sends $(q_0,\ldots,q_i,\ldots)$ to the coderivation given by $Q(1) = q_0(1)\in V\subset SV$ and
\[ Q(v_1\odot\cdots\odot v_i)=q_0(1)\odot v_1\odot\cdots\odot v_i + \sum_{k=1}^{i}\sum_{\sigma\in S(k,i-k)}\varepsilon(\sigma) q_k(v_{\sigma(1)}\odot\cdots\odot v_{\sigma(k)})\odot v_{\sigma(k+1)}\odot\cdots\odot v_{\sigma(i)} \]
where $S(i,j)$ is the set of $(i,j)$-unshuffles, i.e., permutations $\sigma\in S_{i+j}$ such that $\sigma(1)<\cdots<\sigma(i)$ and $\sigma(i+1)<\cdots<\sigma(i+j)$, and $\varepsilon(\sigma)=\varepsilon(\sigma;v_1,\ldots,v_i)$ is the Koszul sign. The inverse of the second isomorphism is given by the above formula minus the term $q_0(1)\odot v_1\odot\cdots\odot v_i$. Coderivations such that $q_i=0$ for $i\neq1$ are called linear.

\begin{remark}\label{rem.exsequence1} There is a natural embedding $i:\Coder(\overline{SV})\rh\Coder(SV)$, given in Taylor coefficients by $(q_1,\ldots,q_i,\ldots)\xrightarrow{} (0,q_1,\ldots,q_i,\ldots)$, which identifies $\Coder(\overline{SV})$ with the graded Lie subalgebra of coderivations $Q\in\Coder(SV)$ such that $Q(1)=0$. It fits into an exact sequence of graded spaces
\begin{equation}\label{exsequence1} 0\xrightarrow{} \Coder(\overline{SV})\xrightarrow{\,\,i\,\,} \Coder(SV)\xrightarrow{\,\,e\,\,} V\xrightarrow{} 0, \end{equation}
where $e$ is the evaluation morphism $Q\xrightarrow{} Q(1)=q_0(1)$.\end{remark}

The usual commutator Lie bracket on $\Coder(SV)$ is induced from a right pre-Lie product, which we call the  Nijenhuis-Richardson product and denote by $\bullet$: namely, $Q\bullet R$ is the coderivation which corestricts to $pQR$. In Taylor coefficients
\[ (Q\bullet R)_i(v_1\odot\cdots\odot v_i) = \sum_{k=0}^{i}\sum_{\sigma\in S(k,i-k)}\varepsilon(\sigma) q_{i-k+1}(r_k(v_{\sigma(1)}\odot\cdots\odot v_{\sigma(k)})\odot v_{\sigma(k+1)}\odot\cdots\odot v_{\sigma(i)}) \]
with the convention $r_0(\varnothing):=r_0(1)$. The bracket on $\Coder(SV)$ and the  Nijenhuis-Richardson product are related by $[Q,R]=(Q\bullet R)-(-1)^{|Q||R|}(R\bullet Q)$.

\begin{remark}\label{remark2.1} Given $v\in V$, let $\sigma_v\in\Coder(SV)$ be the coderivation defined by
\[ \sigma_v(1)=v ,\qquad \sigma_v(v_1\odot\cdots\odot v_i)=v\odot v_1\odot\cdots\odot v_i. \]
In Taylor coefficients $p\sigma_v=(j_v,0,\ldots,0,\ldots)$, where $j_v:V^{\odot 0}\rh V:1\rh v$. Then the section $\sigma:V\rh\Coder(SV):v\rh \sigma_v$ splits the exact sequence \eqref{exsequence1}, and the image is an abelian Lie subalgebra of $\Coder(SV)$.
More in general, for every $v\in V$ and $Q\in\Coder(SA)$ it is plain to see that in Taylor cofficients $[Q,\sigma_v]_{i}(v_1\odot\cdots\odot v_i)=q_{i+1}(v\odot v_1\odot\cdots\odot v_i)$,  for every $i\geq0$.
\end{remark}

Given graded spaces $V$ and $W$, then corestriction $F\rh pF=(f_1,\ldots,f_i,\ldots)$ induces a bijective correspondence between the set of morphisms of coalgebras $F:\overline{SV}\xrightarrow{}\overline{SW}$ and the set $\Hom^0(\overline{SV}, W)\cong\prod_{i\geq1}\Hom^0(V^{\odot i}, W)$ of morphisms of graded spaces from $\overline{SV}$ to $W$. We can reconstruct $F$ from its corestriction via the formula
\[ F^{k}_{i}(v_1\odot\cdots\odot v_i)=\frac{1}{k!}\sum_{j_{1}+\cdots+j_{k}=i}\sum_{\sigma\in S(j_{1},\ldots,j_{k})}\varepsilon(\sigma)f_{j_{1}}(v_{\sigma(1)}\odot\cdots)\odot\cdots\odot f_{j_{k}}(\cdots\odot v_{\sigma(i)}), \]
where $F^k_i$ is the composition $V^{\odot i}\xrightarrow{}\overline{SV}\xrightarrow{F}\overline{SW}\xrightarrow{} W^{\odot k}$, and $F^k_i=0$ if $k>i$. Recall (\cite{KoSo}) that $F$ is an isomorphism (resp.: monomorphism, epimorphism) of coalgebras if and only if its linear part $f_1$ is an isomorphism (resp.: monomorphism, epimorphism) of graded spaces. A morphism $F$ such that $f_i=0$ for $i\geq 2$ is called linear.

\begin{definition} A $L_\infty[1]$ algebra structure $(V,Q)=(V,q_1,\ldots,q_i,\ldots)$ on a graded space $V$ is the datum of a differential graded (dg) coalgebra structure on $\overline{SV}$, that is a degree one coderivation $Q\in\Coder^1(\overline{SV})$, $pQ=(q_1,\ldots,q_i,\ldots)$, such that $Q^2=Q\bullet Q=\frac{1}{2}[Q,Q]=0$. In particular $(q_1)^2=0$: the dg space $(V,q_1)$ is called the tangent complex of the $L_\infty[1]$ algebra $(V,Q)$. \end{definition}

\begin{remark} Given a $L_\infty[1]$ structure $Q$ on $V$, the exact sequence \eqref{exsequence1} enriches to an exact sequence of dg spaces
\begin{equation}\label{exsequence2} 0\xrightarrow{} (\Coder(\overline{SV}),[Q,\cdot])\xrightarrow{\,\,i\,\,} (\Coder(SV),[Q,\cdot])\xrightarrow{\,\,e\,\,} (V,q_1)\xrightarrow{} 0 \end{equation}\end{remark}
\begin{definition} Given $L_\infty[1]$ algebras $(V,Q)$ and $(W,R)$ a $L_\infty[1]$ morphism between them is a morphism of dg coalgebras $F:(\overline{SV},Q)\xrightarrow{} (\overline{SW},R)$, i.e., a morphism of coalgebras such that $FQ-RF=0$. We denote by $\mathbf{L}_\infty[1]$ the category of $L_\infty[1]$ algebras and $L_\infty[1]$ morphisms between them.

A linear $L_\infty[1]$ morphism is also called strict. A morphism of $L_\infty[1]$ algebras is a weak equivalence if its linear part is a quasi isomorphism between the tangent complexes.\end{definition}
\begin{definition} A $L_\infty[1]$ algebra $(V,Q)$ is abelian if $Q$ is a linear coderivation (that is $q_i=0$ for $i\geq2$). It is homotopy abelian if it is weakly equivalent to an abelian $L_\infty[1]$ algebra.\end{definition}

\begin{remark}\label{rem-mixedterms} To give the Taylor coefficients of a $L_\infty[1]$ structure on the direct product $V\times W$ of graded spaces $V$ and $W$, or of a $L_\infty[1]$ morphism with domain $V\times W$,  it is convenient to decompose the symmetric powers of $V\times W$ into types $(V\times W)^{\odot i}\cong\bigoplus_{j=0}^i V^{\odot i-j}\otimes W^{\odot j}$. For any graded space $X$ we see that $\Hom(\overline{S}(V\times W), X)=\prod_{i\geq1}\Hom((V\times W)^{\odot i},X)=\prod_{j+k\geq1}\Hom(V^{\odot j}\otimes W^{\odot k}, X)$ .  \end{remark}

\begin{example}\label{ex:quillenconstr} Given a differential graded Lie algebra (dgla) $(L,d,[\cdot,\cdot])$, it is defined a $L_{\infty}[1]$ structure $Q$ on $s^{-1}L=L[1]$ by $q_1(s^{-1}l)=-s^{-1}dl$, $q_2(s^{-1}l_1\odot s^{-1}l_2)=(-1)^{|l_1|}s^{-1}[l_1,l_2]$ and $q_i=0$ for $i\geq 3$: we denote this $L_\infty[1]$ algebra $(s^{-1}L,Q)$ by $\Sigma^{-1}L$, or sometimes $\Sigma^{-1}(L,d,[\cdot,\cdot])$. This defines a functor $\Sigma^{-1}:\mathbf{DGLA}\rh\mathbf{L}_\infty[1]$, by sending a morphism $f:L\rh M$ of dglas to the strict $L_\infty[1]$ morphism $\Sigma^{-1}(f):\Sigma^{-1}L\rh\Sigma^{-1}M$ given by $\Sigma^{-1}(f)_1(s^{-1}l)=s^{-1}f(l)$ and $\Sigma^{-1}(f)_i=0$ for $i\geq2$.
\end{example}

Recall \cite{Getzler04} that the curvature of a $L_\infty[1]$ algebra $(V,Q)=(V,q_1,\ldots,q_n,\ldots)$ is the function
\[ \mathcal{R}:V^0\rh V^1:x\rh \sum_{i\geq1}\frac{1}{i!}q_i(x^{\odot i})\]
Usually, to make sense of the above infinite sum one adds some assumption on $(V,Q)$, for instance that it is nilpotent or that it is complete with respect to some filtration compatible with the $L_\infty[1]$ structure. We will adopt a less restrictive point of view in Remark \ref{rem.twisting}, for the moment we will just ignore any convergence issue.
\begin{definition} $\MC(V):=\{x\in V^0\,\,\,\mbox{s.t.}\,\,\,\mathcal{R}(x)=0\}$ is called the Maurer-Cartan set of the $L_\infty[1]$ algebra $(V,Q)$, its elements the Maurer-Cartan elements of $(V,Q)$.\end{definition}
It is well known \cite{Getzler04} that given a $L_\infty[1]$ algebra $(V,Q)$ and a Maurer-Cartan element $x\in\MC(V)$, we can twist $Q$ by $x$ to obtain a new $L_\infty[1]$ structure $Q_x$ on $V$, given in Taylor coefficients $pQ_x=(q_{x,1},\ldots,q_{x,i},\ldots)$ by
\begin{equation}\label{twistingstructure} q_{x,i}(v_1\odot\cdots\odot v_i)=\sum_{j\geq0}\frac{1}{j!}q_{i+j}(x^{\odot j}\odot v_1\odot\cdots\odot v_i).\end{equation}
We are going to need a relative version of this result, which can be found, for instance, in \cite{yeku}. Let $F:(V,Q)\rh(W,R)$ be a $L_\infty[1]$ morphism of $L_\infty[1]$ algebras, $pF=(f_1,\ldots,f_i,\ldots)$. Let $x\in\MC(V)$, there is an induced Maurer-Cartan element $\MC(F)(x)\in\MC(W)$: this is given by $\MC(F)(x)=\sum_{i\geq1}\frac{1}{i!}f_i(x^{\odot i})$. We can twist the $L_\infty[1]$ structures $Q$ and $R$ by $x$ and $\MC(F)(x)$ respectively, as in \eqref{twistingstructure}. Finally, we can twist $F$ by $x$ to obtain a new morphism of coalgebras $F_x:\overline{SV}\rh\overline{SW}$, given in Taylor coefficients $pF_x=(f_{x,1},\ldots,f_{x,i},\ldots)$ by
\begin{equation} f_{x,i}(v_1\odot\cdots\odot v_i)=\sum_{j\geq0}\frac{1}{j!}f_{i+j}(x^{\odot j}\odot v_1\odot\cdots\odot v_i).\end{equation}
A proof of the following theorem can be found in \cite{yeku}.
\begin{theorem}\label{twistingmorphism} $F_x:(V,Q_x)\rh(W,R_{\MC(F)(x)})$ is a $L_\infty[1]$ morphism of $L_\infty[1]$ algebras.\end{theorem}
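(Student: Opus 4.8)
The plan is to realize all three twisting operations---of the coderivations $Q$ and $R$ and of the morphism $F$---as conjugations by a single pair of coalgebra automorphisms, so that the identity $F_x Q_x = R_{\MC(F)(x)} F_x$ telescopes out of the untwisted relation $FQ=RF$. Throughout one works in the completed (non reduced) symmetric coalgebras $\widehat{SV}$ and $\widehat{SW}$, treating $x$ either as a formal parameter or under a nilpotency/completeness hypothesis so that the exponential sums below converge.

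First I would introduce, for any degree zero element $x\in V^0$, the group-like element $e^{x}:=\sum_{j\geq0}\frac{1}{j!}x^{\odot j}\in\widehat{SV}$, which satisfies $\Delta(e^x)=e^x\otimes e^x$ precisely because $x$ has degree zero (so no Koszul signs intervene). The left multiplication $\tau_x:\widehat{SV}\rh\widehat{SV}$, $\tau_x(\alpha)=e^x\odot\alpha$, is then a morphism of coalgebras---this uses that the unshuffle coproduct is an algebra map together with group-likeness of $e^x$---with inverse $\tau_{-x}$, hence an automorphism. The three key identifications, each verified once by matching Taylor coefficients through the exponential expansion, are: (i) the twisted coderivation of \eqref{twistingstructure} is $Q_x=\tau_{-x}\,Q\,\tau_x$, and its $0$-th Taylor coefficient $q_{x,0}=\sum_{j\geq1}\frac{1}{j!}q_j(x^{\odot j})=\mathcal{R}(x)$ vanishes exactly under the Maurer--Cartan condition $x\in\MC(V)$, which is what guarantees that $Q_x$ restricts to a genuine coderivation of the reduced coalgebra $\overline{SV}$; (ii) writing $y:=\MC(F)(x)$, the image $\widehat{F}(e^x)$ is group-like with corestriction $\sum_{j\geq1}\frac{1}{j!}f_j(x^{\odot j})=y$, hence equals $e^{y}$, and $y\in\MC(W)$, so that $R_y=\tau_{-y}\,R\,\tau_y$ is likewise a coderivation of $\overline{SW}$; (iii) the twisted morphism $F_x$ of the statement is $F_x=\tau_{-y}\,\widehat{F}\,\tau_x$, since $\widehat F$ carries the $(e^x,e^x)$-primitive $e^x\odot v$ to the $(e^y,e^y)$-primitive $e^y\odot f_{x,1}(v)$, and similarly in higher arity.

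With these in hand the conclusion is immediate: $F_x$ is a coalgebra morphism, being a composite of three such, and
\[ F_x\,Q_x=\tau_{-y}\widehat{F}\tau_x\cdot\tau_{-x}Q\tau_x=\tau_{-y}\,\widehat{F}Q\,\tau_x=\tau_{-y}\,R\widehat{F}\,\tau_x=\tau_{-y}R\tau_y\cdot\tau_{-y}\widehat{F}\tau_x=R_y\,F_x, \]
using $\tau_x\tau_{-x}=\id$, the untwisted morphism relation $\widehat{F}Q=R\widehat{F}$, and $\tau_y\tau_{-y}=\id$. Thus $F_x:(V,Q_x)\rh(W,R_{\MC(F)(x)})$ is an $L_\infty[1]$ morphism, as claimed.

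I expect the main obstacle to lie not in the final algebra---which is the short telescoping displayed above---but in the careful set-up of the completed reduced-versus-non-reduced coalgebra picture in (i)--(iii): one must check that the group-like translations $\tau_{\pm x},\tau_{\pm y}$ intertwine the \emph{reduced} structures precisely when the Maurer--Cartan equations hold, and reconcile the bookkeeping of Koszul signs and unshuffle combinatorics so that $\tau_{-x}Q\tau_x$, $\tau_{-y}R\tau_y$ and $\tau_{-y}\widehat F\tau_x$ reproduce the stated Taylor-coefficient formulas for $Q_x$, $R_{\MC(F)(x)}$ and $F_x$. An alternative, purely computational route would verify $F_xQ_x=R_{\MC(F)(x)}F_x$ directly in Taylor coefficients, but this obscures the structural reason and multiplies the combinatorial overhead, so I would prefer the conjugation argument.
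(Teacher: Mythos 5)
Your proposal is correct, but it is worth noting that the paper itself does not prove this statement at all: it explicitly defers to Yekutieli's \emph{Continuous and twisted $L_\infty$ morphisms} [yeku], where the verification is carried out at the level of Taylor coefficients in a complete (pro-nilpotent) setting. Your conjugation argument is the structural alternative (it is essentially the mechanism underlying Chuang--Lazarev's treatment of twistings): realizing $Q_x=\tau_{-x}Q\tau_x$, $R_y=\tau_{-y}R\tau_y$ and $F_x=\tau_{-y}\widehat{F}\tau_x$ with $\tau_x(\alpha)=e^x\odot\alpha$, $y=\MC(F)(x)$, makes the intertwining relation a three-line telescoping instead of an unshuffle computation, and it buys extra statements for free. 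In particular, since $1$ is group-like, $Q_x(1)=\tau_{-x}Q(e^x)$ is primitive, hence lies in $V$ and equals $\mathcal{R}(x)$; so $x\in\MC(V)$ is equivalent to $Q(e^x)=0$, and then $R(e^y)=R\widehat{F}(e^x)=\widehat{F}Q(e^x)=0$ gives $\MC(F)(x)\in\MC(W)$ as a one-line corollary rather than a separate input (the paper states this fact before the theorem without proof). Two points you flag should indeed be nailed down, and both go through: that $\tau_x$ is a coalgebra map uses that the unshuffle coproduct on $\widehat{SV}$ is an algebra morphism together with $\Delta(e^x)=e^x\otimes e^x$ (no Koszul signs since $|x|=0$), and that group-like elements $g$ of $\widehat{SW}$ with $\epsilon(g)=1$ are exactly exponentials, which identifies $\widehat{F}(e^x)=e^y$ from its corestriction. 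Your convergence caveat is also consistent with the paper's own stance (Remark \ref{rem.twisting}), which imposes the finiteness hypothesis on the element $x$ rather than on $V$; under that hypothesis the extension of $FQ=RF$ to the completed non-reduced coalgebras is arity-wise and unproblematic. In short: a correct proof, more conceptual than the computational one the paper cites, at the modest cost of setting up the completed bialgebra formalism.
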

\begin{remark}\label{rem.twisting} As already remarked some hypothesis is needed to ensure convergence of the above infinite sums. For our purposes it is more convenient to impose such a hypothesis on $x$ rather than on $V$. Suppose that for a particular choice of $x\in V^0$ all of the above infinite sums become finite (so that we can make sense of $Q_x$, $R_{\MC(F)(x)}$ and $F_x$) and $\mathcal{R}(x)=0$: then Theorem \ref{twistingmorphism} holds. \end{remark}
We close this section by recalling the theorem on homotopical transfer of $L_\infty[1]$ structures: this says that $L_\infty[1]$ structures (unlike, for instance, dgla structures) can be transferred along homotopy retractions. It is a major result in the theory of $L_\infty[1]$ algebras, implying for instance the existence of the minimal model \cite{KoSo}. The version we give here is taken from \cite{FMcone}, Theorem 4.1, for a nice proof the reader is referred to the arXiv version of the paper.

\begin{definition}\label{def.contractiondata} Given a pair of dg spaces $(V,q_1)$ and $(W,r_1)$ we call homotopy retraction data from $V$ to $W$ the data of a pair of dg morphisms $\pi:V\xrightarrow{} W$, $f_{1}:W\xrightarrow{} V$ and a contracting homotopy $K\in\Hom^{-1}(V,V)$ such that
\[ \pi f_1=\id_W \]
\[ Kq_1 + q_1K = f_1\pi - \id_V \]
\end{definition}

\begin{theorem}\label{homtranstheorem} Let $(V,Q)$ be a $L_\infty[1]$ algebra, $(W,r_1)$ a dg space and $\pi$, $f_1$, $K$ homotopy retraction data from $(V,q_1)$ to $(W,r_1)$ as in the previous definition. Let $q=pQ$, $q_+=q-q_1$, then there exists an unique morphism of coalgebras $F:\overline{SW}\xrightarrow{}\overline{SV}$ such that $pF=f_1+Kq_+F$. Moreover, if $R\in\Coder^1(\overline{SW})$ is the coderivation wich corestricts to $pR=r_1 + \pi q_+F$, then $R$ is a $L_\infty[1]$ structure on $W$ and $F:(W,R)\xrightarrow{}(V,Q)$ is a $L_\infty[1]$ morphism.
\end{theorem}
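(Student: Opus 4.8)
The plan is to construct $F$ first, then to define $R$, then to prove the intertwining relation $QF=FR$ (which is exactly the assertion that $F$ is an $L_\infty[1]$ morphism), and finally to deduce $R^2=0$ from it. Throughout I use that, via the corestriction isomorphism, a coalgebra morphism is determined by its corestriction $pF=(f_1,f_2,\ldots)$, and that $F$ is a monomorphism iff $f_1$ is (as recalled from \cite{KoSo}).

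\emph{Construction of $F$.} I read $pF=f_1+Kq_+F$ as a fixed point equation for the corestriction $pF\in\Hom^0(\overline{SW},V)$ and solve it one Taylor coefficient at a time. Since $q_+=q-q_1$ only involves the brackets $q_k$ with $k\geq2$, the reconstruction formula for the coalgebra morphism $F$ shows that the $i$-th Taylor coefficient of $q_+F$ is assembled from the components $f_j$ with $j<i$; hence $f_i:=(f_1+Kq_+F)_i$ is determined by $f_1,\dots,f_{i-1}$. This makes the recursion well founded and yields a unique $pF$, hence a unique coalgebra morphism $F$ with the required property. I also record that $f_1$ is injective, being a section of $\pi$ (from $\pi f_1=\id_W$), so that $F$ is a monomorphism of coalgebras.

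\emph{The intertwining relation.} Define $R$ by $pR=r_1+\pi q_+F$; it is automatically a degree one coderivation. Both $QF$ and $FR$ are coderivations relative to the coalgebra morphism $F$ (a direct check with the comultiplication gives $\Delta(QF)=(QF\otimes F+F\otimes QF)\Delta$, and likewise for $FR$), so their difference is as well and is determined by its corestriction. I therefore compute $\delta:=p(QF)-p(FR)$. Starting from $p(QF)=q_1(pF)+q_+F$, and, after substituting $pF=f_1+Kq_+F$ and $pR=r_1+\pi q_+F$, the expansion $p(FR)=(pF)R=f_1r_1+f_1\pi q_+F+Kq_+(FR)$, the chain-map identity $q_1f_1=f_1r_1$ cancels two terms and the homotopy identity $q_1K=f_1\pi-\id-Kq_1$ collapses the rest to
\[\delta=-K\big(q_1q_+F+q_+(FR)\big).\]
Now $Q^2=0$ enters: corestricting $Q^2F=0$ and using $(q_1)^2=0$ gives $q_1q_+F=-q_+(QF)$, whence
\[\delta=Kq_+(QF-FR).\]

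\emph{Closing the induction and concluding.} This last identity is the crux. Because $q_+$ groups at least two outputs together, the arity-$i$ part of $q_+(QF-FR)$ involves the relative coderivation $QF-FR$ only in arities strictly below $i$, i.e. only through $\delta$ in arities $<i$ (together with $F$). An induction on arity, with the immediate base case $i=1$ (where $q_+$ vanishes on a single input), then forces $\delta=0$, that is $QF=FR$, so $F$ is an $L_\infty[1]$ morphism. Finally $FR^2=(QF)R=Q(FR)=Q(QF)=Q^2F=0$, and since $F$ is a monomorphism this gives $R^2=R\bullet R=0$; hence $R$ is an $L_\infty[1]$ structure on $W$ and $F:(W,R)\rh(V,Q)$ is the desired morphism. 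I expect the genuine difficulty to lie in the intertwining computation: reorganizing the corestrictions into the form $\delta=Kq_+(QF-FR)$ demands careful bookkeeping of Koszul signs and of the unshuffle combinatorics in the reconstruction formulas for coalgebra morphisms and for relative coderivations, and one must check that this reconstruction really makes the arity induction well founded.
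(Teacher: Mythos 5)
Your proposal is correct, but note that the paper itself contains no proof of this statement: it quotes the transfer theorem from Fiorenza--Manetti \cite{FMcone}, Theorem 4.1, and explicitly refers the reader to the arXiv version of that paper for the proof. So there is no in-paper argument to match; judged on its own, your proof is complete and follows the standard Fiorenza--Manetti-style scheme. The three pillars all check out: (1) the fixed-point equation $pF=f_1+Kq_+F$ is solvable and uniquely so by arity recursion, precisely because $q_+$ has only components of arity $\geq 2$, so $(q_+F)_i$ involves only $f_j$ with $j<i$ via the reconstruction formula $F^k_i=\frac{1}{k!}\sum f_{j_1}\odot\cdots\odot f_{j_k}$; (2) the corestriction computation is right: $p(QF)=q_1(pF)+q_+F$, $p(FR)=f_1r_1+f_1\pi q_+F+Kq_+(FR)$, the chain-map and homotopy identities give $\delta=-K\bigl(q_1q_+F+q_+(FR)\bigr)$, and the side identity $q_1q_+F=-q_+(QF)$ (corestriction of $Q^2F=0$ together with $(q_1)^2=0$) converts this to $\delta=Kq_+(QF-FR)$, which is exactly the form needed for the arity induction; (3) $R^2=0$ follows from $FR^2=Q^2F=0$ since $f_1$ is split injective (by $\pi f_1=\id_W$), hence $F$ is a monomorphism by the fact recalled from \cite{KoSo}. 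Two points you flag as bookkeeping deserve one explicit line each in a written-up version: the bijection between $F$-coderivations and $\Hom^{\bullet}(\overline{SW},V)$ via corestriction (valid because $\overline{SV}$ is cofree conilpotent), and its triangularity $E^k_i=0$ for $k>i$, which is what makes both your base case $i=1$ (where $q_+E$ vanishes since $E$ maps $W^{\odot 1}$ into $V^{\odot 1}$) and the inductive step legitimate, since each arity-$i$ term of $q_+E$ with $k\geq 2$ inputs contains a factor $\delta_{j_0}$ with $j_0\leq i-1$. With those two remarks made explicit, the argument is airtight.
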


\begin{remark}\label{rem-inversetoF} Notice that homotopical transfer of structure produces a weakly equivalent $L_\infty[1]$ algebra, with $F:(W,R)\rh(V,Q)$ an explicit weak equivalence. Using a more refined version of the above theorem \cite{berglund} it is also possible to construct a $L_\infty[1]$ right inverse $G:(V,Q)\rh(W,R)$ to $F$, with linear part $g_1=\pi$.\end{remark}

\section{Mapping cocone and mapping cocylinder of a dgla morphism}\label{section3}

Given a dgla $(L,d_L,[\cdot,\cdot])$ and a commutative differential graded algebra (dga) $(A,d_A,\cdot)$ there is a natural dgla structure on the tensor product $A\otimes L$, with differential $d_A\otimes\id_L+\id_A\otimes d_L$ and bracket $[a_1\otimes l_1,a_2\otimes l_2]=(-1)^{|l_1||a_2|}a_1a_2\otimes[l_1,l_2]$. When $A=\K[t,dt]$ is the commutative dga of polynomial forms on the line (that is the graded commutative $\K$-algebra freely generated by $t$ in degree zero and $dt$ in degree one, and with differential $d(t)=dt$ and $d(dt)=0$) then the corresponding dgla will be denoted by $L[t,dt]$. Every $s\in\K$ induces a a quasi isomorphism given by the evaluation $e_s:L[t,dt]\xrightarrow{t=s,dt=0} L$, which is left inverse to the natural inclusion $L\xrightarrow{} L[t,dt]:l\xrightarrow{}1\otimes l$. There are degree minus one operators $\int_0^t:L[t,dt]\xrightarrow{} L[t]$ and $\int_0^1:L[t,dt]\xrightarrow{} L$ induced by formal integration in $dt$.

\begin{definition}\label{def:homfibprod} Given a pair of morphisms of dglas $f:L\xrightarrow{} M$ and $g: N\xrightarrow{} M$ the homotopy fiber product of $L$ and $N$ along $f$ and $g$ is the dgla
\[ L\times_M^h N=\{ (l,n,m(t,dt))\in L\times N\times M[t,dt] \mbox{\,\,s.t.\,\,} e_0(m(t,dt))=f(l),\, e_1(m(t,dt))=g(n)\} \]
Given a morphism $f:L\xrightarrow{} M$ of dglas we denote by
\[ K_f = 0 \times_M^h L\qquad\mbox{the homotopy fiber product along $0$ and $f$.}\]
We call $K_f$ the homotopy fiber of $f$.
\end{definition}

\begin{definition}\label{def:homfibseq} A short sequence of $L_\infty[1]$ algebras and $L_\infty[1]$ morphisms which is weakly equivalent to one of the form $\Sigma^{-1}K_f\xrightarrow{\Sigma^{-1}(p_L)}\Sigma^{-1}L\xrightarrow{\Sigma^{-1}(f)}\Sigma^{-1}M$ (cf. Example \ref{ex:quillenconstr}), for some morphism $f:L\rh M$ of dglas, is called a homotopy fiber sequence of $L_\infty[1]$ algebras.\end{definition}

The next theorem, due to M. Manetti, gives a sufficient condition for a homotopy fiber to be homotopy abelian. The reader is referred to \cite{iacMan1,iacMan2} for a proof and some nice applications in deformation theory.

\begin{theorem}\label{th:Manetti} Let $i:L\rh M$ be the inclusion of a sub dgla: if $H(i):H(L)\rh H(M)$ is injective, then the homotopy fiber $K_i$ is homotopy abelian.\end{theorem}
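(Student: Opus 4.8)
The plan is to contract $K_i$ directly onto its cohomology and to show, using the injectivity of $H(i)$, that the transferred $L_\infty[1]$ brackets all vanish, so that $\Sigma^{-1}K_i$ becomes weakly equivalent to the abelian $L_\infty[1]$ algebra $(H(K_i),0)$. First I would analyse the cohomology of $K_i$. The homotopy fiber sequence $K_i\rh L\xrightarrow{i}M$ gives the long exact sequence
\[\cdots\rh H^{n-1}(L)\xrightarrow{H(i)}H^{n-1}(M)\xrightarrow{\partial}H^n(K_i)\rh H^n(L)\xrightarrow{H(i)}H^n(M)\rh\cdots,\]
and under the hypothesis that $H(i)$ is injective the map $H^n(K_i)\rh H^n(L)$ is zero, so $\partial$ induces isomorphisms $\coker(H^{n-1}(i))\xrightarrow{\cong}H^n(K_i)$. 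The essential point is that these classes admit very special representatives: for a cocycle $\beta\in M$ the pair $(0,\beta\,dt)\in L\times M[t,dt]$ lies in $K_i$ (both endpoints vanish since $dt$ evaluates to $0$), is itself a cocycle precisely when $d_M\beta=0$, and represents $\partial[\beta]$; moreover $(0,\beta\,dt)$ is a coboundary in $K_i$ exactly when $[\beta]\in\Image H(i)$, the primitive being $(\lambda,\,t\,i(\lambda)\pm\mu\,dt)$ when $\beta=i(\lambda)+d_M\mu$ with $d_L\lambda=0$.

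The upshot is that injectivity of $H(i)$ guarantees that \emph{every} cohomology class of $K_i$ is represented by a pure one-form cocycle of the shape $(0,\beta\,dt)$, with vanishing $L$-component. This is exactly where the hypothesis is used: if $H(i)$ had a kernel, the classes of $H(K_i)$ mapping onto $\ker H(i)\subset H(L)$ would force representatives $(l,m)$ with a nonzero polynomial part of degree zero in $dt$, and the argument below would break.

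Next I would assemble homotopy retraction data from $K_i$ to $H(K_i)$ in the sense of Definition \ref{def.contractiondata}, in which the inclusion $f_1$ sends each class to such a pure one-form representative. Since we work over a field, a linear splitting of the surjection $Z(M)\twoheadrightarrow\coker H(i)$ extends to a strong deformation retraction $(\pi,f_1,K)$ of the complex underlying $K_i$ onto $H(K_i)$ with $f_1$ of the prescribed form; this is routine homological algebra. Then I would run homotopy transfer (Theorem \ref{homtranstheorem}) along this data. The source $L_\infty[1]$ algebra $\Sigma^{-1}K_i$ carries only the quadratic bracket $q_2$ coming from $[\cdot,\cdot]$, so the transferred $n$-ary operation is a sum over rooted binary trees whose leaves are decorated by $f_1$, internal vertices by $q_2$, internal edges by the homotopy $K$ and the root by $\pi$. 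Every such tree with $n\geq 2$ leaves contains a cherry, i.e.\ an internal vertex both of whose inputs are leaves; at that vertex one evaluates $q_2$ on two pure one-forms, and $[\beta_1\,dt,\beta_2\,dt]=\pm[\beta_1,\beta_2]\,(dt)^2=0$ since $(dt)^2=0$. By multilinearity the entire tree contributes zero, so all transferred brackets of arity $\geq 2$ vanish; the transferred differential vanishes as well since $f_1$ lands in cocycles. Hence $F:(H(K_i),0)\rh\Sigma^{-1}K_i$ is a weak equivalence (Remark \ref{rem-inversetoF}) onto an abelian $L_\infty[1]$ algebra, proving that $K_i$ is homotopy abelian.

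The main obstacle is the first step: establishing that injectivity of $H(i)$ forces all of $H(K_i)$ into the pure one-form (based-loop) direction and choosing, linearly and compatibly with a contraction, such representatives. Once that is in place the vanishing of every higher transferred bracket is a clean consequence of $(dt)^2=0$ together with the cherry argument, and the minimality of the transferred differential is automatic.
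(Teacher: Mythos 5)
Your argument is correct, but there is nothing internal to compare it with: the paper does not prove Theorem \ref{th:Manetti} at all, attributing it to Manetti and referring to \cite{iacMan1,iacMan2} for the proof. What one can compare is your route with the machinery the paper builds around this theorem: there, following Fiorenza--Manetti and Iacono, one transfers from the Thom--Whitney model $K_i$ to the small cone model $s^{-1}\cone_i=s^{-1}L\times M$ via the retraction \eqref{eq:retrcone1}--\eqref{eq:retrcone3}, producing the Bernoulli-number brackets of Theorem \ref{mappingcocylinder}, and homotopy abelianity statements are then extracted from that model. You instead contract $K_i\subset L\times M[t,dt]$ directly onto its cohomology, using the hypothesis exactly once: injectivity of $H(i)$ makes the map $H^n(K_i)\rh H^n(L)$ vanish in the long exact sequence, so $\partial$ is onto and every class is represented by a pure one-form cocycle $(0,\beta\,dt)$; after that, all transfer corrections die for the trivial reason $(dt)^2=0$, with no bracket computations and no Bernoulli numbers. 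This is a genuinely more elementary argument than the cone-model computations, and your identification of where injectivity enters (and how it would fail otherwise, forcing representatives with nonzero polynomial part) is exactly right. Three minor points. First, the cherry/tree argument is more than you need: since the only nonvanishing operation upstairs is $q_2$ and $q_2(f_1(\cdot)\odot f_1(\cdot))=0$, the recursion $pF=f_1+Kq_+F$ of Theorem \ref{homtranstheorem} gives $f_2=Kq_2(f_1\odot f_1)=0$, whence $f_n=0$ and $r_n=\pi q_jF^j_n=0$ for all $n\geq2$ by immediate induction on the Taylor coefficients (the linear part $r_1$ is the zero differential of $H(K_i)$, uncorrected since $q_+$ starts in arity two). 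Second, your appeal to \lq\lq routine homological algebra\rq\rq{} for the contraction is legitimate over a field but worth one line: split $K_i=B\oplus\operatorname{Im}f_1\oplus C$ with $B$ the coboundaries, $\operatorname{Im}f_1$ a complement of $B$ in the cocycles (possible since $f_1$ is injective with $[f_1(h)]=\pm h$), $C$ a complement of the cocycles, and let $K$ invert $d:C\xrightarrow{\cong}B$; up to a sign needed to match the convention $Kq_1+q_1K=f_1\pi-\id$ of Definition \ref{def.contractiondata}, this is the required data. Third, having the weak equivalence $F:(H(K_i),0)\rh\Sigma^{-1}K_i$ pointing \emph{into} $\Sigma^{-1}K_i$ is exactly what the paper's definition of homotopy abelian requires, so the conclusion stands as stated.
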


We denote by $\{B_i\}_{i\in\mathbb{N}}$ the sequence of Bernoulli numbers, i.e., the sequence of (rational) numbers defined by the power series expansion $\frac{t}{e^t-1}=\sum_{i\geq0}\frac{B_i}{i!}t^i=1-\frac{1}{2}t+\frac{1}{2!}\frac{1}{6}t^{2}+\frac{1}{4!}(-\frac{1}{30})t^{4}+\cdots$. This is sometimes called the first sequence of Bernoulli numbers, occasionally we will use the notation $B_i(0):=B_i$ to distinguish it from the second sequence of Bernoulli numbers, defined by $B_{i}(1):=(-1)^iB_i$. Recall that $B_{2i+1}(0)=B_{2i+1}(1)=0$ for $i\geq 1$.

We recall a construction, due to D. Fiorenza and M. Manetti \cite{FMcone} in the case of a homotopy fiber, and which has been further elaborated by D. Iacono \cite{iacono0}, of a $L_\infty[1]$ model of the homotopy fiber product of a pair of morphisms of dglas. This will play a central role in the computations in Section \ref{sectionHDB}.

Let $g:N\rh M$ and $f:L\rh M$ be a pair of morphisms of dglas. Let $N\times^h_M L$ be the homotopy fiber product along $g$ and $f$ and let $\Sigma^{-1}(N\times^h_M L))=(s^{-1}N\times s^{-1}L\times s^{-1}M[t,dt],Q)$ be the corresponding $L_\infty[1]$ algebra, as in Example \ref{ex:quillenconstr}. Let $(s^{-1}\cone_{g,f}, r_1)$ be the dg space
\[ s^{-1}\cone_{g,f}=s^{-1}N\times s^{-1}L\times M,\qquad r_1(s^{-1}n,s^{-1}l,m)=\left(-s^{-1}d_Nn,-s^{-1}d_Ll,d_Mm + g(n)-f(l)\right)\]
We can define homotopy retraction data, as in Definition \ref{def.contractiondata}, from $(s^{-1}N\times s^{-1}L\times s^{-1}M[t,dt],q_1)$ to $(s^{-1}\cone_{g,f},r_1)$ as follows
\begin{equation}\label{eq:retrcone1}f_1(s^{-1}n,s^{-1}l, m) = \left(s^{-1}n,s^{-1}l,s^{-1} \left((1-t)\cdot g(n) + t\cdot f(l) +dt\cdot m\right)\right),\end{equation}
\begin{equation}\label{eq:retrcone2}\pi(s^{-1}n,s^{-1}l,s^{-1}m(t,dt))=\left(s^{-1}n,s^{-1}l,\int^1_0m(t,dt)\right),\end{equation}
\begin{equation}\label{eq:retrcone3}K(s^{-1}n,s^{-1}l,s^{-1}m(t,dt))=\left(0,0,s^{-1}\left(\int_0^t m(t,dt)-t\cdot\int_0^1 m(t,dt)\right)\right).\end{equation}

\begin{theorem}\label{mappingcocylinder} (\cite{iacono0}) The $L_\infty[1]$ structure $R$ on $(s^{-1}\cone_{g,f},r_1)$, induced via homotopy transfer according to Theorem \ref{homtranstheorem}, is given in Taylor coefficients $pR=(r_1,\ldots,r_i,\ldots)$ by (cf. Remark~\ref{rem-mixedterms})
\[ r_2(s^{-1}n_1\odot s^{-1}n_2)=(-1)^{|n_1|}s^{-1}[n_1,n_2],\qquad r_2(s^{-1}l_1\odot s^{-1}l_2)=(-1)^{|l_1|}s^{-1}[l_1,l_2], \]
\[ r_{i+1}(s^{-1}n\otimes m_1\odot\cdots\odot m_i)=\frac{B_i(1)}{i!}\sum_{\sigma\in S_i}\varepsilon(\sigma)[\cdots[g(n),m_{\sigma(1)}]\cdots,m_{\sigma(i)}],\qquad i\geq1, \]
\[ r_{i+1}(s^{-1}l\otimes m_1\odot\cdots\odot m_i)=-\frac{B_i(0)}{i!}\sum_{\sigma\in S_i}\varepsilon(\sigma)[\cdots[f(l),m_{\sigma(1)}]\cdots,m_{\sigma(i)}],\qquad i\geq1, \]
and $R=0$ otherwise.
\end{theorem}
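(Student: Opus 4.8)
The plan is to apply the homotopy transfer Theorem~\ref{homtranstheorem} directly to the retraction data \eqref{eq:retrcone1}--\eqref{eq:retrcone3}, computing first the morphism $F$ from the recursion $pF=f_1+Kq_+F$ and then reading off $R$ from $pR=r_1+\pi q_+F$. Since the source $\Sigma^{-1}(N\times^h_M L)$ comes from a dgla, its structure has only $q_+=q_2$, so both recursions unfold as sums over rooted binary trees: the leaves carry $f_1$, every internal edge carries the homotopy $K$, internal vertices carry the bracket $q_2$, and for $R$ the root carries $\pi q_2$ in place of $Kq_2$. As a preliminary step I would check that \eqref{eq:retrcone1}--\eqref{eq:retrcone3} indeed satisfy Definition~\ref{def.contractiondata}: the identity $\pi f_1=\id$ is immediate from $\int_0^1 dt=1$, the chain-map property of $f_1$ is exactly what the term $g(n)-f(l)$ in $r_1$ is designed to produce, and the homotopy relation $Kq_1+q_1K=f_1\pi-\id$ reduces to the fundamental theorem of calculus on $M[t,dt]$.

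The computation is then organized around three elementary features of the path-space contraction, which I would isolate as the key lemma. Decompose $M[t,dt]=M[t]\oplus M[t]\,dt$ into zero-forms and one-forms. Then: (i) the homotopy $K$ always lands in zero-forms, since it is built from the integration operators $\int_0^t$ and $\int_0^1$ which delete $dt$; (ii) the bracket of two one-forms vanishes because $dt\cdot dt=0$; and (iii) $K$ annihilates zero-forms. Moreover $f_1$ sends an $N$-leaf $s^{-1}n$ to the zero-form $(1-t)g(n)$, an $L$-leaf $s^{-1}l$ to the zero-form $t\,f(l)$, and an $M$-leaf $m$ to the one-form $dt\cdot m$; and the product dgla structure means the $N$- and $L$-components never bracket with the $M$-component, so all interaction happens inside $M[t,dt]$ after $f_1$ has inserted $g(n)$ and $f(l)$ there.

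These observations collapse the tree sum drastically. By (i) every composite subtree evaluates to a zero-form, so the only one-form subtrees available are the bare $M$-leaves; by (ii) a one-form can only be bracketed against a zero-form. Hence at each internal vertex exactly one child is a bare $M$-leaf, which forces every surviving tree to be a left-comb (ladder). This immediately yields the vanishing statements: any Taylor coefficient with two or more inputs among $s^{-1}N\oplus s^{-1}L$ must bracket two zero-forms somewhere and dies by (iii); any purely-$M$ coefficient of arity $\geq2$ dies by (ii); and the only surviving brackets are $r_2$ on $s^{-1}N\odot s^{-1}N$ and on $s^{-1}L\odot s^{-1}L$ --- which reduce to the plain Lie bracket, since after $q_2$ the $M$-slot is a $dt$-free term that $\pi$ integrates to zero --- together with the mixed coefficients $r_{i+1}(s^{-1}n\otimes m_1\odot\cdots\odot m_i)$ and $r_{i+1}(s^{-1}l\otimes m_1\odot\cdots\odot m_i)$.

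It remains to evaluate the ladder for the mixed coefficients. In the $N$-case, setting $x_0=(1-t)g(n)$ and $x_k=K[x_{k-1},\,dt\cdot m_{\sigma(k)}]$ for $1\leq k\leq i-1$, the comb contributes $\int_0^1[x_{i-1},\,dt\cdot m_{\sigma(i)}]$, summed over $\sigma\in S_i$ with Koszul signs; its scalar coefficient is the iterated integral obtained from the recursion $\psi_k=\int_0^t\psi_{k-1}-t\int_0^1\psi_{k-1}$ with $\psi_0=1-t$, followed by a final $\int_0^1$. The content of the theorem is the identity that this iterated integral equals $\tfrac{B_i(1)}{i!}$; for the $L$-case one starts instead from $\psi_0=t$ and carries the overall sign $-f(l)$ coming from $r_1$, producing $-\tfrac{B_i(0)}{i!}$. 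I would verify the base cases ($i=1$ gives $\int_0^1(1-t)\,dt=\tfrac12=\tfrac{B_1(1)}{1!}$, and $i=2$ gives $\int_0^1\tfrac12 t(1-t)\,dt=\tfrac1{12}=\tfrac{B_2(1)}{2!}$) and then invoke the general evaluation of Fiorenza--Manetti \cite{FMcone}, of which this is precisely the computation. The main obstacle is exactly this last step: identifying the recursively defined polynomials $\psi_k$ with the generating series $t/(e^t-1)$ while simultaneously tracking all Koszul signs and the $1/k!$ symmetry factors coming from the coproduct, so that the per-ladder constant emerges as $B_i(1)/i!$ and not off by a combinatorial factor.
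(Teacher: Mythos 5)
Your proposal is correct and follows essentially the same route as the proof the paper relies on: Theorem \ref{mappingcocylinder} is stated without proof, citing \cite{iacono0} (building on \cite{FMcone}), and those sources --- like the paper's own analogous computation in the proof of Proposition \ref{prop.HDB} --- proceed exactly as you do, namely homotopy transfer along \eqref{eq:retrcone1}--\eqref{eq:retrcone3}, collapse of the tree sum to ladders via the observations that $K$ outputs zero-forms and kills them and that $dt\cdot dt=0$, and evaluation of the iterated integrals $\psi_k=\int_0^t\psi_{k-1}-t\int_0^1\psi_{k-1}$, which are normalized Bernoulli polynomials, giving $B_i(1)/i!$ and $-B_i(0)/i!$. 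The sign and symmetry-factor bookkeeping you flag as the main obstacle is dispatched in \cite{FMcone} (and mirrored in the paper's proof of Proposition \ref{prop.HDB}) by the unshuffle identity $\frac{1}{(j-1)!}\sum_{\sigma\in S(i-j+1,1,\ldots,1)}=\sum_{\sigma\in S(i-j+1,j-1)}$, so your outline has no genuine gap.
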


We distinguish two particular cases of the above construction.
\begin{definition}\label{def.mappingcoc-etc} Let $f:L\rh M$ be a morphism of dglas, then:
\begin{enumerate} \item the desuspended mapping cocylinder of $f$ is the $L_\infty[1]$ algebra $s^{-1}\cylinder_f=s^{-1}\cone_{\id_M,f}$, with the $L_\infty[1]$ structure from Theorem \ref{mappingcocylinder} (in the case $N=M$ and $g=\id_M:M\rh M$);
\item the desuspended mapping cocone of $f$ is the $L_\infty[1]$ algebra $s^{-1}\cone_f=s^{-1}\cone_{0,f}$, with the $L_\infty[1]$ structure from Theorem~\ref{mappingcocylinder} (in the case $N=0$).
\end{enumerate}
\end{definition}

\section{$L_\infty[1]$ extensions}\label{section4}

In this section we review the classification of $L_\infty[1]$ extensions from \cite{ChLaz2,methazambon,Lazarev}. Recall that a $L_\infty[1]$ ideal of a $L_\infty[1]$ algebra $(V,q_1,\ldots,q_i,\ldots)$ is a subspace $I\subset V$ such that $q_{i}(I\otimes V^{\odot i-1})\subset I$ for every $i\geq1$: then the quotient $V/I$ inherits a $L_\infty[1]$ structure. The following definition rigidify the one in \cite{Lazarev}, Definition 3.1.

\begin{definition}\label{def:Looextensions} Let $(V,Q)$ and $(W,R)$ be $L_\infty[1]$ algebras. Let $\Theta$ be a $L_\infty[1]$ structure on $V\times W$ such that $(W,R)\xrightarrow{0\times\id_W}(V\times W,\Theta)$ is the inclusion of a $L_\infty[1]$ ideal, and the induced $L_\infty[1]$ structure on $V\times W/\{0\}\times W\cong V$ is $Q$. We call $(W,R)\rh (V\times W,\Theta) \rh (V,Q)$ a $L_\infty[1]$ extension of base $(V,Q)$ and fibre $(W,R)$.\end{definition}

A proof of the following result can be found in \cite{ChLaz2}, Corollary 3.6, or \cite{methazambon}, Proposition 6.1.

\begin{proposition}\label{th:Looextensions} There is a bijective correspondence between the set of $L_\infty[1]$ extensions of base $(V,Q)$ and fiber $(W,R)$ and the set of $L_\infty[1]$ morphisms $(V,Q)\rh\Sigma^{-1}(\Coder(SW),[R,\cdot],[\cdot,\cdot])$ (cf. Example \ref{ex:quillenconstr}). The corresponding $L_\infty[1]$ morphism $pF=(f_1,\ldots,f_i,\ldots)$ and the $L_\infty[1]$ structure $p\Theta=(\theta_1,\ldots,\theta_i,\ldots)$ on $V\times W$ determine each other via the formulas (cf. Remark \ref{rem-mixedterms})
\[\theta_i(v_1\odot\cdots\odot v_i)=(q_i(v_1\odot\cdots\odot v_{i}),sf_i(v_1\odot\cdots\odot v_i)_0(1)),\]
\[\theta_{j}(w_1\odot\cdots\odot w_j)=(0,r_j(w_1\odot\cdots\odot w_j))\qquad\mbox{and}\]
\[\theta_{i+j}(v_1\odot\cdots\odot v_i\otimes w_1\odot\cdots\odot w_j)=(0,sf_i(v_1\odot\cdots\odot v_i)_j(w_1\odot\cdots\odot w_j))\qquad\mbox{for $i,j\geq1$}.\]\end{proposition}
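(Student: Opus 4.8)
The plan is to prove the statement as an equivalence of data together with an equivalence of equations, separating the purely linear-algebraic bijection from the content of the structure equation. First I would fix, once and for all, the type decomposition of Remark \ref{rem-mixedterms}: a coderivation $\Theta\in\Coder(\overline{S}(V\times W))$ is determined by its corestriction, which splits into components $\theta_{i,j}\colon V^{\odot i}\otimes W^{\odot j}\rh V\times W$, and each of these further into a $V$-component $\theta_{i,j}^V$ and a $W$-component $\theta_{i,j}^W$. The requirement that $\Theta$ be a $L_\infty[1]$ extension of base $(V,Q)$ and fibre $(W,R)$ pins down part of this data: the ideal condition forces $\theta_{i,j}^V=0$ whenever $j\geq1$, the prescription on the quotient forces $\theta_{i,0}^V=q_i$, and the prescription on the fibre forces $\theta_{0,j}^W=r_j$. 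The only free data are therefore the maps $\theta_{i,j}^W\colon V^{\odot i}\otimes W^{\odot j}\rh W$ with $i\geq1$ and $j\geq0$.

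The key observation is that, for each fixed $i\geq1$, the family $\{\theta_{i,j}^W\}_{j\geq0}$ is exactly the list of Taylor coefficients of a coderivation of $SW$ (the non-reduced symmetric coalgebra): writing $\phi_i(v_1\odot\cdots\odot v_i)\in\Coder(SW)$ for the coderivation whose $j$-th Taylor coefficient is $w_1\odot\cdots\odot w_j\mapsto\theta_{i,j}^W(v_1\odot\cdots\odot v_i\otimes w_1\odot\cdots\odot w_j)$, and setting $f_i=s^{-1}\phi_i$, one recovers precisely the formulas of the statement (the $0$-th Taylor coefficient producing the $W$-component of the pure-$V$ term $\theta_{i,0}$, in the spirit of Remark \ref{remark2.1}). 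This assignment $\Theta\mapsto(f_1,f_2,\dots)$ is manifestly a bijection onto the set of degree-zero corestrictions $V^{\odot i}\rh s^{-1}\Coder(SW)$, i.e. onto coalgebra morphisms $\overline{S}V\rh\overline{S}(s^{-1}\Coder(SW))$; this is the bijection of underlying data, valid before imposing any equation. Here I would note that $(\Coder(SW),[R,\cdot],[\cdot,\cdot])$ is genuinely a dgla because $[R,R]=0$, so that its image under $\Sigma^{-1}$ is defined as in Example \ref{ex:quillenconstr}.

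It then remains to match the two equations. I would expand the structure equation $\Theta\bullet\Theta=0$ and decompose it according to output component and input type. The $V$-component only receives contributions from pure-$V$ inputs and reduces to $Q\bullet Q=0$; the $W$-component on pure-$W$ inputs reduces to $R\bullet R=0$; both are automatic. The remaining $W$-components, on inputs with at least one $V$-entry, carry all the content, and I would show they coincide termwise with the equations defining $F$ as a $L_\infty[1]$ morphism $(V,Q)\rh\Sigma^{-1}(\Coder(SW),[R,\cdot],[\cdot,\cdot])$. Concretely, under the dictionary $\Theta\leftrightarrow(f_i)$ the Nijenhuis--Richardson products appearing in $\Theta\bullet\Theta$ organize into three families: compositions of two mixed terms reproduce the Nijenhuis--Richardson bracket of the $\phi_i$ in $\Coder(SW)$ (the quadratic coefficient $\tilde q_2$ of $\Sigma^{-1}$), compositions of a mixed term with the fibre structure $R$ reproduce the differential $[R,\cdot]$ (the linear coefficient $\tilde q_1$), and compositions of a pure-$V$ term with a mixed term reproduce the reindexing of the $V$-inputs by $Q$ (the right-hand side of the morphism equation).

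The hard part will be the sign and combinatorial bookkeeping in this last step. One must track the Koszul signs produced by the type decomposition together with the d\'ecalage implicit in $\Sigma^{-1}$, verify that the unshuffle sums defining $\Theta\bullet\Theta$ redistribute correctly between the $V$- and $W$-factors, and in particular handle the $j=0$ Taylor coefficients with care, since it is exactly their presence that forces the unreduced coalgebra $SW$ (rather than $\overline{SW}$) to appear in the target. Once these identifications are checked, the equivalence ``$\Theta$ is a $L_\infty[1]$ extension $\iff F$ is a $L_\infty[1]$ morphism'' follows, and combined with the bijection of data from the first two paragraphs it yields the stated correspondence. The complete sign verification is the one carried out in \cite{ChLaz2} and \cite{methazambon}.
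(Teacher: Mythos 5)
Your outline is correct and coincides with the argument of the sources on which the paper itself relies: the paper gives no in-text proof of Proposition \ref{th:Looextensions}, deferring entirely to \cite{ChLaz2}, Corollary 3.6, and \cite{methazambon}, Proposition 6.1, and your reconstruction --- the type decomposition with the ideal/quotient/fibre constraints pinning down all components except $\theta^W_{i,j}$ for $i\geq1$, the identification of these (for fixed $i$, including the crucial $j=0$ coefficient) with the Taylor coefficients of a coderivation of the unreduced $SW$, and the splitting of $\Theta\bullet\Theta=0$ into the automatic $Q$- and $R$-equations plus the $L_\infty[1]$ morphism equation for $F$ --- is precisely the strategy carried out there. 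The one step you defer, the Koszul-sign and unshuffle bookkeeping (where one should also note that it suffices to check $FQ-\widetilde{R}F=0$ after corestriction, since this difference is a coderivation along $F$), is deferred by the paper as well, to the same references.
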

\begin{remark} By $sf_i(\cdots)$ we mean the composition $V^{\odot i}\xrightarrow{f_i} s^{-1}\Coder(SW)\xrightarrow{s} \Coder(SW)$, where $s$ is the shift map, while $sf_i(\cdots)_j$ means that we take the $j$-th Taylor coefficient. \end{remark}

The next lemma will be needed, in combination with Proposition \ref{th:Looextensions}, in the proof of Proposition~\ref{prop.HDB}.

\begin{lemma}\label{lemma:Psi} Let $i:L\rh M$ be the inclusion of a graded Lie subalgebra (seen as a morphism of dglas with trivial differentials) and let $(s^{-1}\cylinder_i,R)$ be the desuspended mapping cocylinder of $i$, as in Definition~\ref{def.mappingcoc-etc}. Then $\Psi:(\Der(M/L),0,[\cdot,\cdot])\rh(\Coder(\overline{S}(s^{-1}\cylinder_i)),[R,\cdot],[\cdot,\cdot])$, given by \[\Psi(D)_1(s^{-1}l,s^{-1}m,n)=((-1)^{|D|}s^{-1}Dl,(-1)^{|D|}s^{-1}Dm,Dn)\qquad\mbox{and $\Psi(D)_i=0$ for $i\geq2$,}\]
is a morphism of dglas.\end{lemma}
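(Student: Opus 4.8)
The plan is to identify $\Psi(D)$ concretely and then verify the three defining properties of a morphism of dglas: that $\Psi$ is $\K$-linear and degree preserving, that it intertwines the brackets, and that it is a chain map. Writing $s^{-1}\cylinder_i=s^{-1}\cone_{\id_M,i}$ as $s^{-1}M\times s^{-1}L\times M$, the map $\Psi(D)_1$ is the ``diagonal'' action of $D$: it applies $D$ to each of the three slots, with the Koszul sign $(-1)^{|D|}$ on the two desuspended slots and no sign on the middle copy of $M$. This is well defined precisely because $D(L)\subseteq L$, so that the $s^{-1}L$-component is preserved, and a quick degree count shows that $\Psi(D)_1$, hence the linear coderivation $\Psi(D)$, has degree $|D|$; thus $\Psi$ is $\K$-linear and degree preserving. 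Since the source $\Der(M/L)$ carries the zero differential, the chain-map condition reduces to $[R,\Psi(D)]=0$, while compatibility with the brackets is the identity $\Psi([D_1,D_2])=[\Psi(D_1),\Psi(D_2)]$.

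First I would dispose of the bracket compatibility. For any two linear coderivations $P,P'$ with linear parts $\delta,\delta'$, the Nijenhuis--Richardson bracket $[P,P']$ is again linear, with linear part the graded commutator $[\delta,\delta']$: indeed all higher Taylor coefficients of $P\bullet P'$ vanish, because a linear coderivation alters only one tensor factor at a time, so the corestriction of $PP'$ to $W^{\odot n}$ is zero for $n\neq1$. Hence the identity to check becomes $[\delta_{D_1},\delta_{D_2}]=\delta_{[D_1,D_2]}$ as linear endomorphisms of $s^{-1}\cylinder_i$. This is a direct sign computation slot by slot: on the middle $M$ it is the defining graded commutator in $\Der(M)$, and on each desuspended slot the two signs $(-1)^{|D_1|}$, $(-1)^{|D_2|}$ combine to $(-1)^{|D_1|+|D_2|}=(-1)^{|[D_1,D_2]|}$, exactly the sign prescribed by $\Psi([D_1,D_2])$.

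The heart of the lemma is $[R,\Psi(D)]=0$. Because $\Psi(D)$ is linear, this is equivalent to saying that $\delta_D$ is a graded derivation of each structure bracket $r_n$ of Theorem \ref{mappingcocylinder}. I would verify this directly from the explicit formulas: every $r_n$ is assembled out of the graded Lie bracket of $M$ together with the maps $g=\id_M$ and $f=i$, and $D$ is equivariant for all of these --- it is a derivation of $[\cdot,\cdot]$ by hypothesis, commutes with $\id_M$ trivially, and commutes with the inclusion $i$ precisely because $D(L)\subseteq L$. Applying the graded Leibniz rule to the two quadratic brackets and to the iterated brackets $[\cdots[g(n),m_{\sigma(1)}]\cdots]$ and $[\cdots[f(l),m_{\sigma(1)}]\cdots]$ then yields the derivation identity $\delta_D\,r_n=\pm\sum r_n(\cdots\odot\delta_D\odot\cdots)$ for every $n$, which is exactly $[R,\Psi(D)]=0$.

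I expect the main obstacle to be the Koszul-sign bookkeeping in this last step --- the same bookkeeping on $t$ and $dt$ that produces the Bernoulli coefficients. A cleaner, essentially calculation-free alternative, which I would use at least as a guide, is to lift $D$ to the derivation $\tilde D=\id_{\K[t,dt]}\otimes D$ of the dgla $M\times_M^h L\subseteq M\times L\times M[t,dt]$: it commutes with the differential and with the evaluations $e_0,e_1$ (so it preserves the homotopy-fiber-product conditions), and it commutes with the transfer data $f_1,\pi,K$ of \eqref{eq:retrcone1}, \eqref{eq:retrcone2}, \eqref{eq:retrcone3}, since those act only on the $\K[t,dt]$-slot via multiplication by $t,dt$ and the integrations $\int_0^t,\int_0^1$. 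Granting that homotopy transfer is equivariant for a derivation of the source that commutes with the contraction data, the induced $\delta_D$ automatically commutes with the transferred structure $R$, giving $[R,\Psi(D)]=0$; making this equivariance precise, by induction on the recursion $pF=f_1+Kq_+F$, is the one genuinely technical point.
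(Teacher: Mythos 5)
Your proposal is correct and takes essentially the same route as the paper: the main argument is precisely the paper's, namely reducing $[R,\Psi(D)]=0$ to $[r_n,\Psi(D)_1]=0$ for each Taylor coefficient and checking this directly from the nested-bracket formulas of Theorem \ref{mappingcocylinder}, using that $D$ is a derivation preserving $L$ (the bracket compatibility and degree checks being routine, exactly as the paper tacitly assumes). Even your ``calculation-free'' alternative mirrors the paper's own sketched second approach, differing only in mechanism: you lift $D$ to $\id_{\K[t,dt]}\otimes D$ and invoke (graded) equivariance of homotopy transfer under a derivation commuting with the contraction data, whereas the paper transfers the semidirect product $\Der(M/L)\rtimes H_i$ and reads off $\Sigma^{-1}(\Psi)$ as the classifying morphism of the resulting $L_\infty[1]$ extension via Proposition \ref{th:Looextensions}.
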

\begin{proof} The only non trivial thing to prove is $[R,\Psi(D)]=0$ for all $D\in\Der(M/L)$: this is equivalent to $[r_i,\Psi(D)_1]=0$ for every $i\geq1$, which can be checked directly, since $D$ is a derivation and the formula for $r_i$ involves nested brackets. We sketch a different approach: let $H_i$ be the homotopy fiber product $M\times_M^h L$ along $\id_M$ and $i$, seen as morphisms of dglas with trivial differentials. We have an obvious inclusion  $\Der(M/L)\rh\Der(H_i)$, so that we can form the semidirect product $\Der(M/L)\rtimes H_i$. The homotopy retraction data from $s^{-1}H_i$ to $s^{-1}\cylinder_i$ in \eqref{eq:retrcone1}-\eqref{eq:retrcone3} induces homotopy retraction data from $\Sigma^{-1}(\Der(M/L)\rtimes H_i)$ to $s^{-1}\Der(M/L)\times s^{-1}\cylinder_i$. The transferred $L_\infty[1]$ structure can be computed explicitly, along the lines of \cite{FMcone} and \cite{iacono0}, and one verifies that this is a $L_\infty[1]$ extension of base $\Sigma^{-1}(\Der(M/L),0,[\cdot,\cdot])$ and fibre $(s^{-1}\cylinder_i,R)$: finally, the classifying $L_\infty[1]$ morphism, as in Proposition \ref{th:Looextensions}, is exactly $\Sigma^{-1}(\Psi)$, thus $\Psi$ is a morphism of dglas. We will use an analogous argument in the proof of Theorem \ref{TheoremHDB}.\end{proof}

\section{Nonabelian higher derived brackets}\label{sectionHDB}

Let $M$ be a graded Lie algebra, together with a projection $P:M\rh M$ (i.e., $P^2=P$) such that both $L=\operatorname{Ker}\,P$ and $A=\operatorname{Im}\,P$ are graded Lie subalgebras of $M$; let $P^\bot=\id_M-P$. We denote by $\Der(M/L)\subset\Der(M)$ the Lie subalgebra of derivations $D\in\Der(M)$ such that $D(L)\subset L$, which is equivalent to $PDP^\bot=0$, that is,
\begin{equation}\label{PDP=PD} PDP = PD. \end{equation}
We are going to extend to this setting Voronov's constructions of higher derived brackets \cite{voronov,voronov2}.

\begin{definition}\label{def.HDB} For $m\in M$ the higher derived brackets on $A$ associated to $m$ are the multilinear graded symmetric maps $\Phi(m)_i:A^{\odot i}\xrightarrow{} A$, $i\geq0$, defined by
\[ \Phi(m)_i(a_1\odot\cdots\odot a_i)=\sum_{\sigma\in S_i}\varepsilon(\sigma)\sum_{k=0}^{i}\frac{B_{i-k}}{k!(i-k)!}\overbrace{[\cdots[}^{i-k}P([\cdots[m,a_{\sigma(1)}]\cdots,a_{\sigma(k)}]),a_{\sigma(k+1)}]\cdots,a_{\sigma(i)}] \]
for $i\geq1$, and $\Phi(m)_0(1)=Pm$. We denote by $(\Phi(m)_0,\ldots,\Phi(m)_i,\ldots)=\Phi(m)\in\Coder(SA)$ the corresponding coderivation.

For $D\in\Der(M/L)$ the higher derived brackets on $A$ associated to $D$ are the multilinear graded symmetric maps $\Phi(D)_i:A^{\odot i}\rh A$, $i\geq1$, defined by
\[ \Phi(D)_i(a_1\odot\cdots\odot a_i)=\sum_{\sigma\in S_i}\varepsilon(\sigma)\sum_{k=1}^{i}\frac{B_{i-k}}{k!(i-k)!}\overbrace{[\cdots[}^{i-k}P([\cdots[Da_{\sigma(1)},a_{\sigma(2)}]\cdots, a_{\sigma(k)}]),a_{\sigma(k+1)}]\cdots,a_{\sigma(i)}] \]
We denote by $(\Phi(D)_1,\ldots,\Phi(D)_i,\ldots)=\Phi(D)\in\Coder(\overline{SA})$ the corresponding coderivation.
\end{definition}

\begin{remark}\label{remark5.2} If $l\in L$, then the inner derivation $[l,\cdot]$ is in $\Der(M/L)$, and in this case the two constructions coincide: we can (and will) identify $\Phi([l,\cdot])=\Phi(l)$. However, if $[m,\cdot]\in\Der(M/L)$, that is, $m$ is in the normalizer of $L$ in $M$, but $m\not\in L$, then in general $\Phi([m,\cdot])\neq\Phi(m)$, as the two differ by the terms involving $Pm$.\end{remark}

If $A\subset M$ is an abelian Lie subalgebra only the $k=i$ term in the summation for the brackets remains. Moreover, it can be proved with a simple induction, as in \cite{voronov2}, that for any derivation $D\in\Der(M)$ the maps $A^{\otimes i}\rh M:a_1\otimes\cdots\otimes a_i\rh[\cdots[D a_1,a_2]\cdots,a_i]$ are graded symmetric. From this the following proposition follows straightforwardly.

\begin{proposition}\label{abeliancase} If $A$ is an abelian Lie subalgebra of $M$, then for $m\in M$ the brackets in Definition~\ref{def.HDB} reduce to $\Phi(m)_i(a_1\odot\cdots\odot a_i)=P[\cdots[m,a_1]\cdots,a_i]$, $i\geq1$, $\Phi(m)_0(1)=Pm$,  while they reduce to $\Phi(D)_i(a_1\odot\cdots\odot a_i)=P[\cdots[Da_1,a_2]\cdots,a_i]$, $i\geq1$, for $D\in\Der(M/L)$: i.e., they are the same as those introduced by Th. Voronov in \cite{voronov,voronov2}.
\end{proposition}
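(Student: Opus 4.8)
The plan is to treat the two abelian simplifications in turn: the vanishing of the outer brackets collapses the inner sum over $k$ to a single term, and the graded symmetry of iterated brackets then collapses the remaining symmetrisation over $S_i$.

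First I would show that, when $A$ is abelian, every summand with $k<i$ in the definition of $\Phi(m)_i$ (respectively $\Phi(D)_i$) vanishes. For such a summand the expression $\overbrace{[\cdots[}^{i-k}P([\cdots[m,a_{\sigma(1)}]\cdots,a_{\sigma(k)}]),a_{\sigma(k+1)}]\cdots,a_{\sigma(i)}]$ carries at least one outer bracket, and its innermost outer bracket $[P([\cdots[m,a_{\sigma(1)}]\cdots,a_{\sigma(k)}]),a_{\sigma(k+1)}]$ is a bracket of two elements of $A$, since $P$ takes values in $A=\operatorname{Im}P$ and $a_{\sigma(k+1)}\in A$; as $A$ is abelian this vanishes, killing the whole summand. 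The identical remark disposes of the $k<i$ terms of $\Phi(D)_i$. Hence only the term $k=i$ survives, and since $B_0=1$ and $\tfrac{1}{i!\,0!}=\tfrac1{i!}$ we are left with
\[\Phi(m)_i(a_1\odot\cdots\odot a_i)=\frac1{i!}\sum_{\sigma\in S_i}\varepsilon(\sigma)\,P([\cdots[m,a_{\sigma(1)}]\cdots,a_{\sigma(i)}]),\]
and similarly for $\Phi(D)_i$ with $Da_{\sigma(1)}$ replacing $[m,a_{\sigma(1)}]$; the definition $\Phi(m)_0(1)=Pm$ is untouched.

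Next I would invoke the graded symmetry of the map $a_1\otimes\cdots\otimes a_i\mapsto[\cdots[Da_1,a_2]\cdots,a_i]$ recalled just before the statement, applied to the inner derivation $D=[m,\cdot]\in\Der(M)$ in the first case and to $D$ itself in the second; composing with the linear $P$ preserves graded symmetry. For any graded-symmetric $f\colon A^{\otimes i}\to M$ one has $f(a_{\sigma(1)},\ldots,a_{\sigma(i)})=\varepsilon(\sigma)f(a_1,\ldots,a_i)$, whence
\[\frac1{i!}\sum_{\sigma\in S_i}\varepsilon(\sigma)f(a_{\sigma(1)},\ldots,a_{\sigma(i)})=\frac1{i!}\sum_{\sigma\in S_i}\varepsilon(\sigma)^2 f(a_1,\ldots,a_i)=f(a_1,\ldots,a_i).\]
Applying this collapse to the two displays above yields exactly $\Phi(m)_i(a_1\odot\cdots\odot a_i)=P[\cdots[m,a_1]\cdots,a_i]$ and $\Phi(D)_i(a_1\odot\cdots\odot a_i)=P[\cdots[Da_1,a_2]\cdots,a_i]$, which are Voronov's brackets from \cite{voronov,voronov2}.

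The only ingredient that is not purely formal is the graded symmetry quoted above, and I expect its sign bookkeeping to be the sole point demanding care. Should one prefer to reprove it rather than cite \cite{voronov2}, I would verify graded symmetry under each adjacent transposition $(j,j+1)$, which together generate $S_i$. Swapping $a_j,a_{j+1}$ with $j\geq2$ reduces, after setting $X=[\cdots[Da_1,a_2]\cdots,a_{j-1}]$, to $[[X,a_j],a_{j+1}]=(-1)^{|a_j||a_{j+1}|}[[X,a_{j+1}],a_j]$, i.e.\ to the assertion that $\ad_{a_j}$ and $\ad_{a_{j+1}}$ graded commute; by the graded Jacobi identity their graded commutator is $\ad_{[a_j,a_{j+1}]}$, which vanishes because $A$ is abelian. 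The remaining transposition $a_1\leftrightarrow a_2$ instead follows from the graded Leibniz rule $0=D[a_1,a_2]=[Da_1,a_2]+(-1)^{|D||a_1|}[a_1,Da_2]$ together with graded antisymmetry. These are the steps of the induction in \cite{voronov2}.
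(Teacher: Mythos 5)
Your proposal is correct and follows essentially the same route as the paper, which disposes of the proposition in the two sentences preceding it: when $A$ is abelian only the $k=i$ term of the inner sum survives, and the graded symmetry of the maps $a_1\otimes\cdots\otimes a_i\mapsto[\cdots[Da_1,a_2]\cdots,a_i]$ (proved by ``a simple induction, as in \cite{voronov2}'') collapses the symmetrisation over $S_i$ against the factor $\frac{1}{i!}$. Your only addition is to actually carry out the adjacent-transposition induction that the paper delegates to Voronov (correctly handling both $j\geq2$ via $[a_j,a_{j+1}]=0$ and $a_1\leftrightarrow a_2$ via the Leibniz rule), and to note explicitly that the $m$-case follows by taking $D=[m,\cdot]\in\Der(M)$ --- all consistent with the paper's intent.
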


The aim of this section is to prove Theorem \ref{ThoeremHDBvshomotopyfiber} and Theorem \ref{TheoremHDB} from the introduction. Both will follow from the next proposition, where we imitate a construction in \cite{voronov2}, Theorem 2.

\begin{proposition}\label{prop.HDB} There is a $L_\infty[1]$ structure $pR=(r_1,\ldots,r_n,\ldots)$ on $s^{-1}\Der(M/L)\times s^{-1}M\times A$, given in Taylor coefficients by (cf. Remark \ref{rem-mixedterms})
\begin{eqnarray} && r_1(s^{-1}D,s^{-1}m,a)= (0,0,Pm) \nonumber \\
&& r_2(s^{-1}m_1\odot s^{-1}m_2) = (-1)^{|m_1|}s^{-1}[m_1,m_2] \nonumber \\
&& r_2(s^{-1}D_1\odot s^{-1}D_2) = (-1)^{|D_1|}s^{-1}[D_1,D_2] \nonumber \\
&& r_2(s^{-1}D\otimes s^{-1}m) = (-1)^{|D|}s^{-1}Dm \nonumber \\
&& r_{i+1}(s^{-1}D\otimes a_1\odot\cdots\odot a_i) = \Phi(D)_{i}(a_1\odot\cdots\odot a_i) \nonumber \\
&& r_{i+1}(s^{-1}m\otimes a_1\odot\cdots \odot a_i) = \Phi(m)_i(a_1\odot\cdots\odot a_i) \nonumber \end{eqnarray}
for $i\geq1$, and $R=0$ otherwise.\end{proposition}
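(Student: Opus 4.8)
The plan is to construct the asserted $L_\infty[1]$ structure not by verifying the identities $[R,R]=0$ directly on the explicit formulas, but by exhibiting $R$ as the result of homotopy transfer, so that the Bernoulli-number formulas emerge automatically and the $L_\infty[1]$ axioms come for free from Theorem \ref{homtranstheorem}. Concretely, I would start from the dgla $\Der(M/L)\ltimes H_i$, where $H_i = M\times_M^h L$ is the homotopy fiber product of $\id_M$ and the inclusion $i:L\rh M$ (exactly as in the sketch in Lemma \ref{lemma:Psi}), but now enlarged to account for the abelian factor $A$. The key observation is that the projection $P$ splits $M = L\oplus A$ as graded spaces, and the desuspended mapping cocylinder $s^{-1}\cylinder_i = s^{-1}M\times s^{-1}L\times M$ retracts, via the contraction data \eqref{eq:retrcone1}--\eqref{eq:retrcone3}, onto a smaller space; composing with the splitting $M = L\oplus A$ one arrives at a retraction onto $s^{-1}\Der(M/L)\times s^{-1}M\times A$.

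The main work is to set up the correct homotopy retraction data. First I would form the big dgla $\Sigma^{-1}(\Der(M/L)\ltimes H_i)$ and recall from Theorem \ref{mappingcocylinder} the transferred structure $R$ on $s^{-1}\cylinder_i = s^{-1}\cone_{\id_M,i}$, whose nonzero Taylor coefficients are the nested-bracket expressions weighted by $B_i(1)$ and $B_i(0)$. Then I would augment these contraction data by the derivation factor, exactly producing homotopy retraction data from $\Sigma^{-1}(\Der(M/L)\ltimes H_i)$ onto $s^{-1}\Der(M/L)\times s^{-1}\cylinder_i$, as already indicated in the proof of Lemma \ref{lemma:Psi}. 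The additional step here is to further contract the $s^{-1}L$ summand inside $s^{-1}\cylinder_i$ using $P^\bot$, identifying the retract with $s^{-1}\Der(M/L)\times s^{-1}M\times A$: the point is that $L\subset M$ is acyclic-complemented by $A$ in the relevant complex, so one can push the $L$-component into the cone differential. With these data in hand, Theorem \ref{homtranstheorem} immediately yields that the transferred $R$ is a genuine $L_\infty[1]$ structure and $F$ a weak equivalence, so no direct verification of $[R,R]=0$ is needed.

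The remaining task is purely computational: to check that the transferred Taylor coefficients agree with the formulas in the statement, i.e. that $r_{i+1}(s^{-1}D\otimes a_1\odot\cdots\odot a_i) = \Phi(D)_i$ and $r_{i+1}(s^{-1}m\otimes a_1\odot\cdots\odot a_i) = \Phi(m)_i$, together with the quadratic and linear terms. This is where the Bernoulli numbers enter: the $B_i(0)$ and $B_i(1)$ from Theorem \ref{mappingcocylinder}, after composing with the projection $P$ and resumming over the positions where $P$ is inserted among the nested brackets, reorganize into the double sum $\sum_{k}\frac{B_{i-k}}{k!(i-k)!}$ of Definition \ref{def.HDB}. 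I would carry this out by expanding the transfer formula $pR = r_1 + \pi q_+ F$ with $F$ determined recursively by $pF = f_1 + Kq_+F$, tracking how each application of $K$ contributes an inner bracket against the $m_j$'s (giving the $B$-weighted tower from the cocylinder) and how the final projection $P$ distributes the remaining $i-k$ brackets outside $P$. The identity \eqref{PDP=PD}, $PDP = PD$, is exactly what guarantees that the $D$-brackets close on $A$ and that the $k=0$ term is absent for $\Phi(D)$ while present for $\Phi(m)$.

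I expect the principal obstacle to be the bookkeeping in this last computation, namely matching the combinatorial weights: one must verify that after transferring along the augmented contraction, the coefficient of each distinct nested-bracket monomial with exactly $i-k$ outer brackets equals $\frac{B_{i-k}}{k!(i-k)!}$ summed appropriately over $S_i$. This reduces to a Bernoulli-number identity already established in the computations of Fiorenza--Manetti \cite{FMcone} and Iacono \cite{iacono0}, of which, as the authors note, extensive use is made. Rather than redo these from scratch, I would invoke Theorem \ref{mappingcocylinder} as a black box and argue that the only new ingredient is the insertion of $P$ at a single position in each tower, whose effect is precisely to split a length-$i$ nested bracket into an inner $P$-bracket of length $k$ and an outer tower of length $i-k$, thereby converting the single Bernoulli weight $B_{i-k}(0)$ into the stated double sum via the multinomial splitting $\frac{1}{i!}\binom{i}{k} = \frac{1}{k!(i-k)!}$.
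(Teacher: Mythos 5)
Your proposal follows essentially the same route as the paper's proof: Bandiera likewise obtains $R$ by homotopy transfer (Theorem \ref{homtranstheorem}) from the $L_\infty[1]$ structure on $s^{-1}\Der(M/L)\times s^{-1}\cylinder_i$ assembled from Theorem \ref{mappingcocylinder}, Proposition \ref{th:Looextensions} and Lemma \ref{lemma:Psi}, contracts the $s^{-1}L$-summand via the explicit $P^\bot$-homotopy \eqref{retrdata1}--\eqref{retrdata3} exactly as you describe, and matches the transferred Taylor coefficients with Definition \ref{def.HDB} through the recursion $pF=f_1+Kq_+F$ and the identity $\sum_{k=0}^{i-1}B_k\binom{i}{k}=0$. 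Your two-stage framing starting from $\Sigma^{-1}(\Der(M/L)\rtimes H_i)$ and two small slips (calling $A$ \emph{abelian}, and crediting \eqref{PDP=PD} with the absence of the $k=0$ term for $\Phi(D)$, which is structural, whereas the crucial use of the hypotheses is that $A$ being a Lie subalgebra makes the bottom-line terms land in $\operatorname{Ker}P^\bot$) are cosmetic and do not alter the argument.
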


\begin{proof} Let $s^{-1}\cylinder_i$ be the desuspended mapping cocylinder for the inclusion $i$, with the $L_\infty[1]$ structure defined in Theorem \ref{mappingcocylinder} (in the case $d_L=d_M=0$). By combining Proposition~\ref{th:Looextensions} and Lemma~\ref{lemma:Psi} (cf. also the proof of the latter), it is defined a $L_\infty[1]$ structure $Q$ on the space $s^{-1}\Der(M/L)\times s^{-1}\cylinder_i=s^{-1}\Der(M/L)\times(s^{-1}M\times s^{-1}L\times M)$: explicitly formulas can be derived from those in \ref{mappingcocylinder}, \ref{th:Looextensions} and \ref{lemma:Psi}.

We consider the following homotopy retraction data from $(s^{-1}\Der(M/L)\times s^{-1}\cylinder_i,q_1)$ to $(s^{-1}\Der(M/L)\times s^{-1}M\times A,r_1)$, with $r_1$ as in the claim of the proposition:
\begin{equation}\label{retrdata1} \pi:s^{-1}\Der(M/L)\times s^{-1}\cylinder_i\xrightarrow{} s^{-1}\Der(M/L)\times s^{-1}M\times A : \end{equation}\[:\left(s^{-1}D,\,(s^{-1}m,\,s^{-1}l,\, n)\,\right) \xrightarrow{\qquad}\left(s^{-1}D,\, s^{-1}m,\,Pn\right)\]
\begin{equation}\label{retrdata2} f_1: s^{-1}\Der(M/L)\times s^{-1}M\times A \xrightarrow{} s^{-1}\Der(M/L)\times s^{-1}\cylinder_i:\end{equation}\[:(s^{-1}D,s^{-1}m,a)\xrightarrow{\qquad} \left(s^{-1}D,(s^{-1}m,\,s^{-1}P^\bot m,,\,a)\right) \]
\begin{equation}\label{retrdata3} K: s^{-1}\Der(M/L)\times s^{-1}\cylinder_i\xrightarrow{} s^{-1}\Der(M/L)\times s^{-1}\cylinder_i:\end{equation} \[:\left(s^{-1}D,(s^{-1}m,\,s^{-1}l,\,n)\right)\xrightarrow{\qquad} \left(0,(0,\,s^{-1}P^\bot n,\,0)\right) \]
We leave to the reader to check that this is in fact homotopy retraction data as in Definition~\ref{def.contractiondata}. We are going to prove that $R$ is the $L_\infty[1]$ structure on $s^{-1}\Der(M/L)\times s^{-1}M\times A$ induced from $Q$ via homotopical transfer of $L_\infty[1]$ structures, as in Theorem \ref{homtranstheorem}.

Let $F:\overline{S}(s^{-1}\Der(M/L)\times s^{-1}M\times A)\rh\overline{S}(s^{-1}\Der(M/L)\times s^{-1}\cylinder_i)$ be the morphism of coalgebras in the claim of  Theorem \ref{homtranstheorem}: recall that its Taylor coefficients are defined inductively so that the linear one is $f_1$, and for $i\geq2$ we have $f_i=\sum_{j=2}^{2} Kq_j F^j_i$, where $F^j_i$ (cf. Section \ref{section2}) is given by the formula
\[ F^{j}_{i}(\cdots)=\frac{1}{j!}\sum_{k_{1}+\cdots+k_{j}=i}\sum_{\sigma\in S(k_{1},\ldots,k_{j})}\varepsilon(\sigma)f_{k_{1}}(\cdots)\odot\cdots\odot f_{k_{j}}(\cdots). \]
As $K$ factors through the inclusion $s^{-1}L\rh s^{-1}\cylinder_i\rh  s^{-1}\Der(M/L)\times s^{-1}\cylinder_i$, so does $f_i$ for every $i\geq2$. By looking at the explicit formulas for $q_j$, $j\geq2$, we see that
\[ F^j_i(\cdots) = \frac{1}{(j-1)!}\sum_{\sigma\in S(i-j+1,1,\ldots,1)}\varepsilon(\sigma)f_{i-j+1}(\cdots)\odot f_1(\cdots)\odot\cdots\odot f_1(\cdots) + \left\{\mbox{terms in $\operatorname{Ker}(Kq_j)$}\right\}. \]
An inductive analysis of the several possibilities implies that $f_{i+1}$, for $i\geq1$, vanishes everywhere but on mixed terms of type (cf. Remark \ref{rem-mixedterms}) $s^{-1}D\otimes a_{1}\odot\cdots\odot a_i$ and $s^{-1}m\otimes a_1\odot\cdots\odot a_i$.

\begin{remark}\label{rem-nonLie} The reader will notice that up to now everything works fine without the assumption that $A\subset M$ is a graded Lie subalgebra (this, however, will be essential in the sequent computation).\end{remark}

We show that $f_{i+1}$, for $i\geq1$, is explicitly given by
\begin{equation}\label{morfismoF1} f_{i+1}(s^{-1}D\otimes a_1\odot\cdots\odot a_i)=\left(0,\left(0,\,s^{-1}\frac{1}{i!}\sum_{\sigma\in S_i}\varepsilon(\sigma)P^\bot [\cdots[Da_{\sigma(1)},a_{\sigma(2)}]\cdots,a_{\sigma(i)}],\,0 \right)\,\right),\end{equation}
\begin{equation}\label{morfismoF2} f_{i+1}(s^{-1}m\otimes a_1\odot\cdots\odot a_i)=\left(0,\left(0,\,s^{-1}\frac{1}{i!}\sum_{\sigma\in S_i}\varepsilon(\sigma)P^\bot [\cdots[m,a_{\sigma(1)}]\cdots,a_{\sigma(i)}],\,0 \right)\,\right), \end{equation}
and $f_{i+1}=0$ otherwise.

The reader will check directly (recall \eqref{PDP=PD}) that $f_2=Kq_2 f_1^{\odot 2}$, so we suppose $i\geq2$. We have to prove $f_{i+1}=\sum_{j=2}^{i+1}Kq_j F^j_{i+1}$. To simplify the computation we notice that we are only interested in keeping track of $p_Mq_j F^{j}_{i+1}$, where we denote by $p_M$ the natural projection $s^{-1}\cylinder_i\rh M$: in fact $Kq_jF^{j}_{i+1}=\left(0,\left(0,s^{-1}P^\bot(p_Mq_jF^j_{i+1}),0\right)\right)$. The considerations which preceded Remark \ref{rem-nonLie} imply that for $2\leq j\leq i$
\[ p_Mq_j F^j_{i+1}(s^{-1}D\otimes a_1\odot\cdots\odot a_i) = \]
\[ = \sum_{\sigma\in S(i-j+1,j-1)}\varepsilon(\sigma)p_Mq_j\left(f_{i-j+2}(s^{-1}D\otimes a_{\sigma(1)}\odot\cdots\odot a_{\sigma(i-j+1)})\ten a_{\sigma(i-j+2)}\odot\cdots\odot a_{\sigma(i)}\right) = \]
\[  = -\frac{B_{j-1}}{(j-1)!(i-j+1)!}\sum_{\sigma\in S_i}\varepsilon(\sigma)\overbrace{[\cdots[}^{j-1}P^\bot([\cdots[Da_{\sigma(1)},a_{\sigma(2)}]\cdots]),a_{\sigma(i-j+2)}]\cdots,a_{\sigma(i)}]  \]
In the first identity we used symmetry of $f_{i-j+2}$ and $q_j$ to deduce that $\frac{1}{(j-1)!}\sum_{\sigma\in S(i-j+1,1,\ldots,1)}\cdots= \frac{1}{(j-1)!(i-j+1)!}\sum_{\sigma\in S_i}\cdots=\sum_{\sigma\in S(i-j+1,j-1)}\cdots$, where the suspension points must be filled as in the second term of the identity. In the same way, for $2\leq j\leq i$
\[ p_Mq_j F^j_{i+1}(s^{-1}m\otimes a_1\odot\cdots\odot a_i) = \]
\[  = -\frac{B_{j-1}}{(j-1)!(i-j+1)!}\sum_{\sigma\in S_i}\varepsilon(\sigma)\overbrace{[\cdots[}^{j-1}P^\bot([\cdots[m,a_{\sigma(1)}]\cdots]),a_{\sigma(i-j+2)}]\cdots,a_{\sigma(i)}]  \]
The remaining terms to consider are
\[ p_Mq_{i+1}F^{i+1}_{i+1}(s^{-1}D\otimes a_1\odot\cdots\odot a_i) =  0,  \]
for $i\geq2$, and
\[ p_Mq_{i+1}F^{i+1}_{i+1}(s^{-1}m\otimes a_1\odot\cdots\odot a_i)=p_Mq_{i+1}(s^{-1}m\otimes a_1\odot\cdots\odot a_i)+p_Mq_{i+1}(s^{-1}P^\bot m\otimes a_1\odot\cdots\odot a_i) = \]
\[ = \frac{B_{i}}{i!}\sum_{\sigma\in S_i}\varepsilon(\sigma)[\cdots[m-P^\bot m,a_{\sigma(1)}]\cdots,a_{\sigma(i)}] = \frac{B_{i}}{i!}\sum_{\sigma\in S_i}\varepsilon(\sigma)[\cdots[Pm,a_{\sigma(1)}]\cdots,a_{\sigma(i)}]\]

Finally, after a change of variable $k=i-j+1$, we see that
\[ \sum_{j=2}^{i+1}p_M q_j F^j_{i+1}(s^{-1}D\otimes a_1\odot\cdots\odot a_i) = \left( -\sum_{k=1}^{i-1}\frac{B_{i-k}}{k!(i-k)!} \right)\sum_{\sigma\in S_i}\varepsilon(\sigma)[\cdots[Da_{\sigma(1)},a_{\sigma(2)}]\cdots,a_{\sigma(i)}] + \]
\[ + \sum_{k=1}^{i-1} \frac{B_{i-k}}{k!(i-k)!}\sum_{\sigma\in S_i}\varepsilon(\sigma)\overbrace{[\cdots[}^{i-k}P([\cdots[Da_{\sigma(1)},a_{\sigma(2)}]\cdots]),a_{\sigma(k+1)}]\cdots,a_{\sigma(i)}]  \]
We use the well known identity $\sum_{k=0}^{i-1} B_k\left(\begin{array}{c} i \\ k \end{array}\right)=0$, for $i\geq2$, in order to conclude
\begin{equation}\label{p_Mq_+Fcaso1} \sum_{j=2}^{i+1}p_M q_j F^j_{i+1}(s^{-1}D\otimes a_1\odot\cdots\odot a_i) = \frac{1}{i!}\sum_{\sigma\in S_i}\varepsilon(\sigma)[\cdots[Da_{\sigma(1)},a_{\sigma(2)}]\cdots,a_{\sigma(i)}] + \end{equation}
\[ + \sum_{k=1}^{i-1}\frac{B_{i-k}}{k!(i-k)!}\sum_{\sigma\in S_i}\varepsilon(\sigma)\overbrace{[\cdots[}^{i-k}P([\cdots[Da_{\sigma(1)},a_{\sigma(2)}]\cdots]),a_{\sigma(k+1)}]\cdots,a_{\sigma(i)}] \]
In a similar way
\begin{equation}\label{p_Mq_+Fcaso2} \sum_{j=2}^{i+1}p_M q_j F^j_{i+1}(s^{-1}m\otimes a_1\odot\cdots\odot a_i) =\frac{1}{i!}\sum_{\sigma\in S_i}\varepsilon(\sigma)[\cdots[m,a_{\sigma(1)}]\cdots,a_{\sigma(i)}] + \end{equation}
\[ + \sum_{k=0}^{i-1} \frac{B_{i-k}}{k!(i-k)!}\sum_{\sigma\in S_i}\varepsilon(\sigma)\overbrace{[\cdots[}^{i-k}P([\cdots[m,a_{\sigma(1)}]\cdots]),a_{\sigma(k+1)}]\cdots,a_{\sigma(i)}] \]
Since $A$ is $[\cdot,\cdot]$-closed, in both identities the bottom line lies in $A=\operatorname{Ker}\,P^\bot$: this implies, as desired, $f_{i+1} = \sum_{j=2}^{i+1}Kq_j F^j_{i+1}$.

In order to complete the proof we have to show $r_{i+1}=\sum_{j=2}^{i+1}\pi q_j F^{j}_{i+1}$ for $i\geq1$, where $r_{i+1}$ is defined as in the claim of the proposition: the reader will check directly that $r_2=\pi q_2 f_1^{\odot 2}$, so we suppose $i\geq2$. We already computed $p_Mq_jF^j_{i+1}$, on the other hand  $p_{s^{-1}\Der(M/L)}q_j F^j_{i+1}=p_{s^{-1}M}q_j F^j_{i+1}=0$ for $i\geq2$ and $2\leq j\leq i+1$: let's prove for instance that for $p_{s^{-1}M}q_j F^j_{i+1}=0$, for $j>2$ this follows from $p_{s^{-1}M}q_j=0$, in the remaining case from the fact that $p_{s^{-1}M}:(s^{-1}\Der(M/L)\times s^{-1}\cylinder_i,R)\rh\Sigma^{-1}(M,0,[\cdot,\cdot])$ is a strict morphism of $L_\infty[1]$ algebras and from $p_{s^{-1}M}^{\odot 2}F^2_{i+1}=0$ for $i\geq2$, since $p_{s^{-1}M}f_j=0$ for $j\geq2$. Similarly we see that $p_{s^{-1}\Der(M/L)}q_j F^j_{i+1}=0$ for $i\geq2$ and $2\leq j\leq i+1$: thus, for $i\geq2$, $\pi q_jF^j_{i+1}=\left(0,0,P(p_Mq_jF^j_{i+1})\right)$. A comparison between Definition \ref{def.HDB} and the previous equations \eqref{p_Mq_+Fcaso1}-\eqref{p_Mq_+Fcaso2} shows $r_{i+1}=\sum_{j=2}^{i+1}\pi q_j F^{j}_{i+1}$ and proves the proposition.
\end{proof}

Theorem \ref{TheoremHDB} follows directly from the previous proposition and Proposition \ref{th:Looextensions}.

\begin{proof}(of Theorem \ref{TheoremHDB}) Let $\Der(M/L)\rtimes M$ be the obvious semidirect product, Theorem \ref{TheoremHDB} is equivalent to say that $\Phi:\Der(M/L)\rtimes M\rh\Coder(SA):(D,m)\rh\Phi(D)+\Phi(m)$ is a morphism of graded Lie algebras. The $L_\infty[1]$ structure from the previous proposition fits into a $L_\infty[1]$ extension of base $\Sigma^{-1}(\Der(M/L)\rtimes M,0,[\cdot,\cdot])$ (cf. Example \ref{ex:quillenconstr}) and fibre $(A,0)$, which is classified by a $L_\infty[1]$ morphism $\Sigma^{-1}(\Der(M/L)\rtimes M,0,[\cdot,\cdot])\rh\Sigma^{-1}(\Coder(SA),0,[\cdot,\cdot])$, according to Proposition~\ref{th:Looextensions}. Finally, the explicit form of the correspondence in Proposition \ref{th:Looextensions} implies that the classifying morphism is exactly $\Sigma^{-1}(\Phi)$, thus $\Phi$ is a morphism of graded Lie algebras.\end{proof}

Theorem \ref{ThoeremHDBvshomotopyfiber} will follow as a particular case of a more general result. We consider the $L_\infty[1]$ structure $Q$ on $s^{-1}\Der(M/L)\times s^{-1}\cylinder_i$ as in the first paragraph of the proof of Proposition~\ref{prop.HDB}: then in the same proof we constructed a $L_\infty[1]$ morphism
\[ F:(s^{-1}\Der(M/L)\times s^{-1}M\times A,R)\rh (s^{-1}\Der(M/L)\times s^{-1}\cylinder_i,Q), \]
in fact a weak equivalence, cf. Equations \eqref{morfismoF1} and \eqref{morfismoF2}. At this point we remark that if $D\in\Der^1(M/L)$ satisfies $[D,D]=0$ then $(s^{-1}D,0,0)\in(s^{-1}\Der(M/L)\times s^{-1}M\times A)^0$ satisfies the assumptions in Remark \ref{rem.twisting}, so it makes sense to twist everything by $D$ to get a new $L_\infty[1]$ morphism
\[ F_D:(s^{-1}\Der(M/L)\times s^{-1}M\times A,R_D)\rh (s^{-1}\Der(M/L)\times s^{-1}\cylinder_i,Q_D). \]
Let $j:(N,D,[\cdot,\cdot])\rh(M,D,[\cdot,\cdot])$ be the inclusion of a sub dgla, then $R_D$ restricts to a $L_\infty[1]$ structure on $s^{-1}N\times A$, still denoted by $R_D$, explicitly given by
\begin{equation}\label{eq:modelfibredproduct1}r_{D,1}(s^{-1}n,a)=\left(-s^{-1}Dn,P(Da+n)\right),\qquad r_{D,2}(s^{-1}n_1\odot s^{-1}n_2)=(-1)^{|n_1|}s^{-1}[n_1,n_2],\end{equation}
\begin{equation}\label{eq:modelfibredproduct2}r_{D,i+1}(s^{-1}n\otimes a_1\odot\cdots\odot a_i)=\Phi(n)_i(a_1\odot\cdots\odot a_1),\qquad i\geq1,\end{equation}
\begin{equation}\label{eq:modelfibredproduct3}r_{D,i}(a_1\odot\cdots\odot a_i)=\Phi(D)_i(a_1\odot\cdots\odot a_i),\qquad i\geq2,\end{equation}
and $R_D=0$ otherwise. Similarly $Q_D$ restricts on $s^{-1}\cone_{j,i}=s^{-1}N\times s^{-1}L\times M\subset s^{-1}\cylinder_i$ to a $L_\infty[1]$ structure, still denoted by $Q_D$, which is exactly Iacono's model (Theorem \ref{mappingcocylinder}) for the homotopy fiber product $N\times^h_M L$ along $j:(N,D,[\cdot,\cdot])\rh(M,D,[\cdot,\cdot])$ and $i:(L,D,[\cdot,\cdot])\rh(M,D,[\cdot,\cdot])$. The $L_\infty[1]$ morphism $F_D$ restricts to a $L_\infty[1]$ morphism $F_D:(s^{-1}N\times A,R_D)\rh (s^{-1}\cone_{j,i},Q_D)$, explicilty given by
\begin{equation}\label{eq:F_D1}f_{D,1}(s^{-1}n,a)=\left(s^{-1}n,s^{-1}P^\bot(n+Da),a\right),\end{equation}
\begin{equation}\label{eq:F_D2}f_{D,i+1}(s^{-1}n\otimes a_1\odot\cdots\odot a_i)=\left(0,s^{-1}\frac{1}{i!}\sum_{\sigma\in S_i}\varepsilon(\sigma)P^\bot[\cdots[n,a_{\sigma(1)}]\cdots,a_{\sigma(i)}],0\right),\qquad i\geq1,\end{equation}
\begin{equation}\label{eq:F_D3}f_{D,i}(a_1\odot\cdots\odot a_i)=\left(0,s^{-1}\frac{1}{i!}\sum_{\sigma\in S_i}\varepsilon(\sigma)P^\bot[\cdots[Da_{\sigma(1)},a_{\sigma(2)}]\cdots,a_{\sigma(i)}],0\right),\qquad i\geq2,\end{equation}
and $F_D=0$ otherwise. Finally, we notice that $F_D$ is a weak equivalence of $L_\infty[1]$ algebras: in fact $\pi:(s^{-1}\cone_{j,i},q_{1,D})\rh(s^{-1}N\times A,r_{1,D}):(s^{-1}n,s^{-1}l,m)\rh(s^{-1}n,Pm)$ is a dg right inverse to $f_{D,1}$, and $K(s^{-1}n,s^{-1}l,m)=(0,s^{-1}P^\bot m,0)$ is a homotopy between $f_{D,1}\pi$ and $\id_{s^{-1}\cone_{j,i}}$ (recall~\eqref{PDP=PD}). This proves the following Theorem.
\begin{theorem}\label{th:modelfiberproduct} The $L_\infty[1]$ algebra $(s^{-1}N\times A,R_D)$, where $R_D$ is defined by equations \eqref{eq:modelfibredproduct1}-\eqref{eq:modelfibredproduct3}, is a weak model for the homotopy fiber product $N\times^h_M L$ along $j:(N,D,[\cdot,\cdot])\rh(M,D,[\cdot,\cdot])$ and $i:(L,D,[\cdot,\cdot])\rh(M,D,[\cdot,\cdot])$.\end{theorem}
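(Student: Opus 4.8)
The plan is to realize $(s^{-1}N\times A, R_D)$ as the source of an explicit weak equivalence whose target is Iacono's model of the homotopy fiber product, and then appeal to the definition of a weak model. By Theorem~\ref{mappingcocylinder}, the $L_\infty[1]$ structure on $s^{-1}\cone_{j,i}=s^{-1}N\times s^{-1}L\times M$ obtained by homotopy transfer is a model for $N\times^h_M L$ along $j$ and $i$; so the whole content is to exhibit a weak equivalence $(s^{-1}N\times A,R_D)\rh(s^{-1}\cone_{j,i},Q_D)$ and to confirm that the right-hand structure is indeed this model.

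First I would justify the twist-and-restrict procedure that produces all three objects $R_D$, $Q_D$, $F_D$. Since $D\in\Der^1(M/L)$ with $[D,D]=0$ is a Maurer--Cartan element of $s^{-1}\Der(M/L)$, Remark~\ref{rem.twisting} and Theorem~\ref{twistingmorphism} guarantee that twisting by $(s^{-1}D,0,0)$ turns $F$ into a bona fide $L_\infty[1]$ morphism $F_D$ between the twisted total algebras. The key closure point, using that $N\subset M$ is a sub-dgla (so $DN\subset N$ and $[N,N]\subset N$) together with the facts that $A$ is $[\cdot,\cdot]$-closed and $P$ projects onto it, is that $s^{-1}N\times A$ is a sub-$L_\infty[1]$-algebra of the twisted source and $s^{-1}\cone_{j,i}$ a sub-$L_\infty[1]$-algebra of the twisted $s^{-1}\cylinder_i$. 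Restricting the formulas of Proposition~\ref{prop.HDB} through the twisting formula~\eqref{twistingstructure} should then yield exactly \eqref{eq:modelfibredproduct1}--\eqref{eq:modelfibredproduct3} and \eqref{eq:F_D1}--\eqref{eq:F_D3}, and a comparison of the resulting Bernoulli-number brackets with Theorem~\ref{mappingcocylinder} identifies $(s^{-1}\cone_{j,i},Q_D)$ with Iacono's model of $N\times^h_M L$ carrying the differential $D$.

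It then remains to check that $F_D$ is a weak equivalence, i.e.\ that its linear part $f_{D,1}$ is a quasi-isomorphism of tangent complexes. The plan here is to write down the candidate retraction $\pi(s^{-1}n,s^{-1}l,m)=(s^{-1}n,Pm)$ and contracting homotopy $K(s^{-1}n,s^{-1}l,m)=(0,s^{-1}P^\bot m,0)$, and verify the two identities $\pi f_{D,1}=\id$ and $f_{D,1}\pi-\id=Kq_{1,D}+q_{1,D}K$. The first is immediate because $Pa=a$ for $a\in A$; the second is a finite computation with $P$ and $P^\bot$ that crucially invokes the relation $PDP=PD$ of \eqref{PDP=PD}. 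The existence of such a deformation retraction exhibits $f_{D,1}$ as a homotopy equivalence of complexes, hence a quasi-isomorphism, so $F_D$ is a weak equivalence and, by transitivity, $(s^{-1}N\times A,R_D)$ is weakly equivalent to Iacono's model, proving it a weak model for $N\times^h_M L$. I expect the main obstacle to be not this last homotopy check but the bookkeeping of the previous step: verifying that twisting by $D$ and restricting to the smaller subspaces commute so as to produce precisely the stated closed-form brackets, with no spurious contributions surviving from the discarded $s^{-1}\Der(M/L)$ factor or from the complement of $N$ in $M$.
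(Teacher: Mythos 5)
Your proposal is correct and follows essentially the same route as the paper: twist the weak equivalence $F$ from the proof of Proposition~\ref{prop.HDB} by the Maurer--Cartan element $(s^{-1}D,0,0)$ via Remark~\ref{rem.twisting}, restrict to $s^{-1}N\times A$ and $s^{-1}\cone_{j,i}$, identify the target with Iacono's model from Theorem~\ref{mappingcocylinder}, and verify that $f_{D,1}$ is a quasi-isomorphism. Even your proposed retraction data $\pi(s^{-1}n,s^{-1}l,m)=(s^{-1}n,Pm)$ and homotopy $K(s^{-1}n,s^{-1}l,m)=(0,s^{-1}P^\bot m,0)$, together with the appeal to \eqref{PDP=PD}, are exactly what the paper uses.
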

\begin{remark}\label{rem-earlierversions} In fact, with the previous notations, it can be proved that $R_D$ is the $L_\infty[1]$ structure induced from $Q_D$ via homotopy transfer along the homotopy retraction data $f_{D,1}$, $\pi$, $K$: this can be done by adapting the computations in Proposition \ref{prop.HDB}.\end{remark}
We now proceed to the proof of Theorem \ref{ThoeremHDBvshomotopyfiber}.
\begin{proof} (of Theorem \ref{ThoeremHDBvshomotopyfiber}) The fact that $\Phi(D)$ is a $L_\infty[1]$ structure is an obvious consequence of Theorem~\ref{TheoremHDB}, the fact that $(A,\Phi(D))$ is a weak model for the homotopy fiber $K_i$ is the particular case of the previous theorem when $N=0$. In fact in \eqref{eq:F_D1}-\eqref{eq:F_D3} we constructed an explicit $L_\infty[1]$ weak equivalence $F_D:A\rh s^{-1}\cone_i$ between $(A,\Phi(D))$ and Fiorenza-Manetti's $L_\infty[1]$ mapping cocone of $i$. The composition of $F_D$ and the strict $L_\infty[1]$ morphism $p_{s^{-1}L}:s^{-1}\cone_i\rh\Sigma^{-1}L$ is given in Taylor coefficients as in equations \eqref{morfismo}. Finally we have a commutative diagram
\[\xymatrix{A\ar[r]\ar[d]^{F_D}&\Sigma^{-1}L\ar[r]^-{\Sigma^{-1}(i)}\ar@{=}[d]&\Sigma^{-1}M\ar@{=}[d]\\
s^{-1}\cone_i\ar[r]^-{p_{s^{-1}L}}&\Sigma^{-1}L\ar[r]^-{\Sigma^{-1}(i)}&\Sigma^{-1}M}\]
By the results of Fiorenza and Manetti \cite{FMcone} the bottom sequence is a homotopy fiber sequence, in fact a model for $\Sigma^{-1}K_f\xrightarrow{\Sigma^{-1}(p_L)}\Sigma^{-1}L\xrightarrow{\Sigma^{-1}(f)}\Sigma^{-1}M$, and so we are done.
\end{proof}
\begin{remark} As for the diagram in the introduction, after d\'ecalage the $L_\infty[1]$ structure on $s^{-1}M\times A$ is $R_D$ in Theorem \ref{th:modelfiberproduct}. The vertical pair of quasi inverses $L_\infty[1]$ quasi isomorphisms are the composition $(s^{-1}M\times A,R_D)\xrightarrow{F_D}(s^{-1}\cylinder_i,Q_D)\xrightarrow{p_{s^{-1}L}}\Sigma^{-1}(L,D,[\cdot,\cdot])$, $F_D$ as in \eqref{eq:F_D1}-\eqref{eq:F_D3}, and the strict $L_\infty[1]$ morphism $\Sigma^{-1}(L,D,[\cdot,\cdot])\rh (s^{-1}M\times A,R_D):s^{-1}l\rh(s^{-1}l,0)$.
\end{remark}
\begin{remark}\label{rem.generalizedbrackets} In this remark we address the problem of how to generalize Theorem \ref{ThoeremHDBvshomotopyfiber} and Theorem~\ref{TheoremHDB} when we remove the assumption that $A\subset M$ is a graded Lie subalgebra. We sketch a proof that in this case one can still construct a correspondence $\Phi:\Der(M/L)\rtimes M\rh\Coder(SA)$ such that both theorems hold. To construct $\Phi$ it is sufficient to construct the corresponding $L_\infty[1]$ extension and this can be done using homotopy transfer, following the proof of Proposition \ref{prop.HDB} up to Remark \ref{rem-nonLie}. We obtain a $L_\infty[1]$ structure $R$ on $s^{-1}\Der(M/L)\times s^{-1}M\times A$: $r_1$ is as in Proposition~\ref{prop.HDB}, while a direct computation shows that $r_2$ is almost as in Proposition \ref{prop.HDB}, except that one should put $\Phi(m)_1(a)=P[m,a]-\frac{1}{2}P[Pm,a]$. An analysis of the several possibilities implies that $r_{i+1}$, for $i\geq2$,  vanishes everywhere but on mixed terms of type $s^{-1}D\otimes a_1\odot\cdots\odot a_i$ and $s^{-1}m\otimes a_1\odot\cdots\odot a_i$: moreover, $A\subset s^{-1}\Der(M/L)\times s^{-1}M\times A$ is a $L_\infty[1]$ ideal. Finally, putting all these things together, we see that we have in fact constructed a $L_\infty[1]$ extension of base $\Sigma^{-1}(\Der(M/L)\rtimes M,0,[\cdot,\cdot])$ and fibre $(A,0)$, classified by a \emph{strict} $L_{\infty}[1]$ morphism which is the desired $\Sigma^{-1}(\Phi)$ (it is also easy to check that the restriction of $\Phi$ to $\Der(M/L)$ factors through the inclusion $\Coder(\overline{SA})\rh\Coder(SA)$). Explicit formulas for $\Phi$ will have to be more involved than those in Definition~\ref{def.HDB}: for instance one can notice that if $A$ is not $[\cdot,\cdot]$-closed, then there is no guarantee that it will be closed with respect to the brackets from Definition \ref{def.HDB}, alternatively one could try to compute the first brackets directly along the previous lines. The proof of Theorem \ref{th:modelfiberproduct} can be repeated verbatim, except that the explicit formulas for $F_D$ do not longer hold (the formula for $f_{D,1}$, however, does), thus by defining the morphism $A\rh\Sigma^{-1} L$ as the composition of $F_D$ and $p_{s^{-1}L}$ (this is no longer given by Equation \eqref{morfismo}), also the proof of Theorem \ref{ThoeremHDBvshomotopyfiber} can be repeated verbatim.
\end{remark}

We close this section with two observations. The first one is an immediate corollary of Theorem~\ref{ThoeremHDBvshomotopyfiber} and Theorem \ref{th:Manetti}. The second one should be confronted with the results of \cite{CS08}.

\begin{corollary}\label{cor.homabelian} In the hypotheses of Theorem \ref{ThoeremHDBvshomotopyfiber}, if $H(i):H(L,D)\rh H(M,D)$ is injective (equivalently, if $H(P):H(M,D)\rh H(A,PD)$ is surjective), then the $L_{\infty}[1]$ algebra $(A,\Phi(D))$ is homotopy abelian.\end{corollary}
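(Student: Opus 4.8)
The plan is simply to chain together the two results cited in the statement. First I would recall that by Theorem~\ref{ThoeremHDBvshomotopyfiber} the $L_\infty[1]$ algebra $(A,\Phi(D))$ is a weak model for the homotopy fiber $K_i$; concretely, the explicit $L_\infty[1]$ weak equivalence $F_D$ produced in the proof of that theorem (Equations~\eqref{eq:F_D1}--\eqref{eq:F_D3} in the case $N=0$) connects $(A,\Phi(D))$ to the Fiorenza--Manetti mapping cocone $s^{-1}\cone_i$, which models $\Sigma^{-1}K_i$. Next, under the hypothesis that $H(i)\colon H(L,D)\rh H(M,D)$ is injective, Theorem~\ref{th:Manetti} applies and yields that $K_i$, hence $\Sigma^{-1}K_i$, is homotopy abelian. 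Since homotopy abelianity means, by Definition, being weakly equivalent to an abelian $L_\infty[1]$ algebra, and since the relation of being weakly equivalent is an equivalence relation (weak equivalences of $L_\infty[1]$ algebras being invertible up to homotopy, cf. Remark~\ref{rem-inversetoF}), this property is invariant under weak equivalence; the weak equivalence between $(A,\Phi(D))$ and $\Sigma^{-1}K_i$ therefore transports homotopy abelianity to $(A,\Phi(D))$, which is the assertion.

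For the parenthetical reformulation I would argue by elementary homological algebra. The decomposition $M=L\oplus A$ with $P$ the projection onto $A$ gives a short exact sequence of complexes
\[ 0\rh (L,D)\mapor{i}(M,D)\mapor{P}(A,PD)\rh 0, \]
where $P$ is a morphism of complexes because $PDP=PD$ (Equation~\eqref{PDP=PD}) forces $P(Dm)=(PD)(Pm)$ for all $m\in M$, and where $PD$ is a differential on $A$ since $PD=\Phi(D)_1$ squares to zero. The induced long exact sequence in cohomology shows that $H(i)$ is injective in every degree precisely when all the connecting homomorphisms $H(A,PD)\rh H(L,D)$ vanish, and by exactness this happens precisely when $H(P)\colon H(M,D)\rh H(A,PD)$ is surjective in every degree.

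There is no serious obstacle here, in keeping with the statement being an immediate corollary: the only points needing (routine) verification are the invariance of homotopy abelianity under weak equivalence and the long exact sequence argument of the last paragraph. The one conceptual ingredient worth flagging is that it is precisely Theorem~\ref{ThoeremHDBvshomotopyfiber} which licenses replacing the abstract homotopy fiber $K_i$, to which Manetti's criterion is phrased to apply, by the concrete higher-derived-bracket model $(A,\Phi(D))$, so that no separate homotopy-abelianity computation for the brackets $\Phi(D)_i$ is required.
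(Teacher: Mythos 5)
Your proposal is correct and follows exactly the route the paper intends: the statement is presented there as an immediate consequence of Theorem~\ref{ThoeremHDBvshomotopyfiber} and Theorem~\ref{th:Manetti}, and you chain these together just as intended, with the weak-equivalence invariance of homotopy abelianity and the long exact sequence of $0\rh(L,D)\rh(M,D)\rh(A,PD)\rh0$ (using \eqref{PDP=PD}) supplying precisely the routine verifications the paper leaves implicit.
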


\begin{proposition}\label{corollaryCS} Let $i:(L,D,[\cdot,\cdot])\rh(M,D,[\cdot,\cdot])$ be the inclusion of a sub dgla and let $A_k$, $k=1,2$, be a complement to $L$ in $M$, with the $L_\infty[1]$ structure $\Phi_k(D)$ from Theorem \ref{ThoeremHDBvshomotopyfiber} (or from Remark \ref{rem.generalizedbrackets} if we do not wish to assume $A_k\subset M$ a graded Lie subalgebra). Then $(A_1,\Phi_1(D))$ and $(A_2,\Phi_2(D))$ are isomorphic $L_\infty[1]$ algebras.\end{proposition}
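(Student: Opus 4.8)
The plan is to realize both $(A_1,\Phi_1(D))$ and $(A_2,\Phi_2(D))$ as homotopy retracts of a single fixed $L_\infty[1]$ algebra, namely the Fiorenza--Manetti desuspended mapping cocone $(s^{-1}\cone_i,Q_D)$, and then to exploit the explicit linear parts of the transfer data to produce an honest $L_\infty[1]$ isomorphism between them rather than a mere weak equivalence. Write $P_k:M\rh A_k$ for the projection with kernel $L$ and $P_k^\bot=\id_M-P_k$, for $k=1,2$. By the $N=0$ case of Theorem \ref{th:modelfiberproduct} (together with Remark \ref{rem-earlierversions}, and Remark \ref{rem.generalizedbrackets} in case the $A_k$ are only assumed to be complements of $L$), each $(A_k,\Phi_k(D))$ is the $L_\infty[1]$ structure induced from $(s^{-1}\cone_i,Q_D)$ via homotopy transfer along the retraction data $f_{D,1}^{(k)}$, $\pi^{(k)}$, $K^{(k)}$, where $f_{D,1}^{(k)}(a)=(s^{-1}P_k^\bot Da,a)$ (cf. \eqref{eq:F_D1}) and $\pi^{(k)}(s^{-1}l,m)=P_k m$; the accompanying weak equivalence is the $L_\infty[1]$ morphism $F_D^{(k)}:(A_k,\Phi_k(D))\rh(s^{-1}\cone_i,Q_D)$ with linear part $f_{D,1}^{(k)}$.

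First I would invoke Remark \ref{rem-inversetoF} for $k=2$ to produce an $L_\infty[1]$ right inverse $G:(s^{-1}\cone_i,Q_D)\rh(A_2,\Phi_2(D))$ whose linear part is exactly $g_1=\pi^{(2)}$. Then I would form the composite $L_\infty[1]$ morphism $\Xi:=G\circ F_D^{(1)}:(A_1,\Phi_1(D))\rh(A_2,\Phi_2(D))$, which makes sense since the composition of $L_\infty[1]$ morphisms is again an $L_\infty[1]$ morphism. Its linear part is the composition of the linear parts, that is $\pi^{(2)}\circ f_{D,1}^{(1)}:a\mapsto\pi^{(2)}(s^{-1}P_1^\bot Da,a)=P_2 a$, in other words the restriction $P_2|_{A_1}:A_1\rh A_2$.

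The elementary but decisive observation is that $P_2|_{A_1}$ is a linear isomorphism: since $M=L\oplus A_1=L\oplus A_2$, its kernel is $A_1\cap L=0$, whence injectivity; and given $a\in A_2$, writing $a=l+a'$ with $l\in L$, $a'\in A_1$ gives $P_2 a'=P_2(a-l)=a$, whence surjectivity. Consequently $\Xi$ is an $L_\infty[1]$ morphism with invertible linear part, so by the criterion recalled in Section \ref{section2} (\cite{KoSo}) its underlying coalgebra morphism is invertible; its inverse is automatically compatible with the codifferentials $\Phi_1(D)$ and $\Phi_2(D)$, and hence is again an $L_\infty[1]$ morphism. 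Therefore $\Xi$ is the sought $L_\infty[1]$ isomorphism, and I would note in passing that $P_2|_{A_1}$ is forced to be a chain map between the tangent complexes $(A_1,P_1D)$ and $(A_2,P_2D)$ precisely because it arises as the linear part of an $L_\infty[1]$ morphism.

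The real content of the argument is the upgrade from "weakly equivalent" (which is immediate, both sides being weak models of $K_i$) to "isomorphic", and the crux is the coincidence $\pi^{(2)}\circ f_{D,1}^{(1)}=P_2|_{A_1}$ being a bona fide bijection rather than just a quasi-isomorphism. Thus the main thing to secure is the existence of the \emph{strict} right inverse $G$ with prescribed linear part $\pi^{(2)}$; this is exactly what Remark \ref{rem-inversetoF} (via \cite{berglund}) guarantees for any homotopy transfer, and it is the only nonformal ingredient. I do not expect any genuine obstacle beyond checking that the retraction data of Theorem \ref{th:modelfiberproduct} indeed has the stated linear parts, which is visible from \eqref{eq:F_D1} and the formula for $\pi$ in its proof.
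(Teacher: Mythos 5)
Your proposal is correct and follows essentially the same route as the paper's own proof: both compose the transfer weak equivalence $(A_1,\Phi_1(D))\rh(s^{-1}\cone_i,Q_D)$ having linear part $a\rh(s^{-1}P_1^\bot Da,a)$ with the $L_\infty[1]$ right inverse $(s^{-1}\cone_i,Q_D)\rh(A_2,\Phi_2(D))$ supplied by Remark \ref{rem-inversetoF} (made applicable by Remark \ref{rem-earlierversions}), note that the composite has linear part $P_2|_{A_1}$, a dg isomorphism, and invoke the standard fact that an $L_\infty[1]$ morphism with invertible linear part is an $L_\infty[1]$ isomorphism. Your explicit verification that $P_2|_{A_1}$ is bijective and your remarks on inverting the coalgebra morphism merely spell out steps the paper leaves implicit.
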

\begin{proof} Let $P_k:M\rh A_k$ be the projection with kernel $L$. Let $(s^{-1}\cone_i, Q_D)$ be the desuspended mapping cocone of the inclusion $i$: there is a $L_\infty[1]$ morphism $(A_1,\Phi_1(D))\rh (s^{-1}\cone_i,Q_D)$, with linear Taylor coefficient $a\rh (s^{-1}P_1^\bot Da, a)$, and a $L_\infty[1]$ morphism $(s^{-1}\cone_i,Q_D)\rh(A_2,\Phi_2(D))$, with linear Taylor coefficient $(s^{-1}l,m)\rh P_2m$ (cf. Remark \ref{rem-inversetoF} and Remark \ref{rem-earlierversions}). The composite $(A_1,\Phi_1(D))\rh~(A_2,\Phi_2(D))$ has linear Taylor coefficient $a\rh~P_2a$, which is a dg isomorphism, thus it is a $L_\infty[1]$ isomorphism. \end{proof}

\section{Examples and applications}\label{sec-examples}

\begin{example}\label{ex-chuanglazarev} Recall (cf. \cite{voronov,FrZam2}) that every $L_\infty[1]$ structure can be obtained via higher derived brackets. Let $V$ be a graded space, and identify it with an abelian Lie subalgebra of $\Coder(SV)$ via the section $v\rh \sigma_v$ from Remark \ref{remark2.1}, splitting the exact sequence from Remark \ref{rem.exsequence1}. Evaluation at 1 is then identified with a projection $P:\Coder(SV)\rh V:R\rh R(1)=r_0(1)$, whose kernel is $\Coder(\overline{SV})$: thus we are in the hypotheses of Voronov's construction of higher derived brackets. We claim that the morphism $\Phi:\Coder(SV)\rh\Coder(SV):R\rh\Phi(R)$ of graded Lie algebras, as in Theorem \ref{TheoremHDB}, coincides with the identity. In fact we see from Remark \ref{remark2.1} that $\Phi(R)_0(1)=PR=r_0(1)$ and $\Phi(R)_n(v_1\odot\cdots\odot v_n)=P[\cdots[R,\sigma_{v_1}]\cdots,\sigma_{v_n}]=r_n(v_1\odot\cdots\odot v_n)$, that is, $\Phi(R)=R$ as claimed. If $Q$ is a $L_\infty[1]$ structure on $V$, by the above $Q=\Phi(Q)=\Phi([Q,\cdot])$: then Theorem \ref{ThoeremHDBvshomotopyfiber} implies the following result, already proved in \cite{ChLaz}.
\begin{theorem} A $L_\infty[1]$ algebra $(V,Q)$ is weakly equivalent to $\Sigma^{-1}K_i$, where $K_i$ is the homotopy fiber of the inclusion of dglas
\[ i:(\Coder(\overline{SV}),[Q,\cdot],[\cdot,\cdot])\xrightarrow{}(\Coder(SV),[Q,\cdot],[\cdot,\cdot] ) \]\end{theorem}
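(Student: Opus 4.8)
The plan is to obtain the theorem as a direct translation of Theorem~\ref{ThoeremHDBvshomotopyfiber} into the situation already set up in this example, so that all the analytic content is in place before we start. First I would fix the data for Voronov's construction: put $M=\Coder(SV)$, let $P:M\rh V$ be evaluation at $1$, so that $L:=\operatorname{Ker}P=\Coder(\overline{SV})$ is a graded Lie subalgebra by Remark~\ref{rem.exsequence1}, and identify $V$ with the image $A:=\sigma(V)$ of the splitting section $v\rh\sigma_v$ of Remark~\ref{remark2.1}, which is an abelian graded Lie subalgebra of $M$. In this way $M=L\oplus A$ as graded spaces, with $A$ abelian, which is exactly the ambient setting of Theorem~\ref{ThoeremHDBvshomotopyfiber}.

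Next I would produce the derivation. Take $D=[Q,\cdot]$, the inner derivation of $M$ determined by $Q$. Since $Q\in\Coder^1(\overline{SV})=L^1$ and $[Q,Q]=0$, one checks that $D$ has degree one, that $D^2=\frac{1}{2}[[Q,Q],\cdot]=0$, and that $D(L)\subset L$ because $L$ is a Lie subalgebra containing $Q$; hence $D\in\Der^1(M/L)$ with $[D,D]=0$, so $D$ meets the hypotheses of Theorem~\ref{ThoeremHDBvshomotopyfiber}.

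The one genuinely computational point is to identify the transferred structure with $Q$ itself. Because $A$ is abelian, Proposition~\ref{abeliancase} reduces the higher derived brackets to iterated nested brackets, and the formula $[R,\sigma_v]_i(v_1\odot\cdots\odot v_i)=r_{i+1}(v\odot v_1\odot\cdots\odot v_i)$ of Remark~\ref{remark2.1}, applied repeatedly so as to peel off one $\sigma$ at a time, yields $\Phi(R)=R$ for every $R\in\Coder(SV)$; in particular $\Phi(Q)=Q$. Since $Q\in L$, Remark~\ref{remark5.2} gives $\Phi([Q,\cdot])=\Phi(Q)$, so the $L_\infty[1]$ structure $\Phi(D)$ furnished by Voronov's construction coincides with $Q$ under the identification $A\cong V$.

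Finally I would invoke Theorem~\ref{ThoeremHDBvshomotopyfiber} for this $M$, $L$, $A$ and $D$: it asserts that $(A,\Phi(D))$ is a weak model for the homotopy fiber $K_i$ of the inclusion $i:(L,D,[\cdot,\cdot])=(\Coder(\overline{SV}),[Q,\cdot],[\cdot,\cdot])\rh(\Coder(SV),[Q,\cdot],[\cdot,\cdot])=(M,D,[\cdot,\cdot])$. Reading this back through the d\'ecalage conventions (cf. Example~\ref{ex:quillenconstr}), $(V,Q)=(A,\Phi(D))$ is weakly equivalent to $\Sigma^{-1}K_i$, which is the assertion. The main obstacle is not really an obstacle here: the substantive work lies in the identity $\Phi(R)=R$ and in Theorem~\ref{ThoeremHDBvshomotopyfiber}, both already available, and what remains is only to verify that $D=[Q,\cdot]$ satisfies the hypotheses and to keep the suspension bookkeeping straight.
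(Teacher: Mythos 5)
Your proposal is correct and follows essentially the same route as the paper's own argument in Example~\ref{ex-chuanglazarev}: identify $V$ with the abelian subalgebra $\sigma(V)\subset\Coder(SV)$ splitting the sequence of Remark~\ref{rem.exsequence1}, use Remark~\ref{remark2.1} to show $\Phi(R)=R$ for every $R\in\Coder(SV)$ (hence $Q=\Phi(Q)=\Phi([Q,\cdot])$ via Remark~\ref{remark5.2}), and conclude by Theorem~\ref{ThoeremHDBvshomotopyfiber}. The only additions you make are the routine verifications, left implicit in the paper, that $D=[Q,\cdot]$ lies in $\Der^1(M/L)$ with $D^2=0$.
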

\begin{remark} The $L_\infty[1]$ morphism $\operatorname{Ad}:(V,Q)\rh\Sigma^{-1}(\Coder(\overline{SV}),[Q,\cdot],[\cdot,\cdot])$ from Theorem \ref{ThoeremHDBvshomotopyfiber} is given in Taylor coefficients $p\operatorname{Ad}=(\operatorname{Ad}_1,\ldots,\operatorname{Ad}_i,\ldots)$ by
\[ \operatorname{Ad}_i:V^{\odot i}\rh s^{-1}\Coder(\overline{SV}): v_1\odot\cdots\odot v_i\rh s^{-1}([\cdots[Q,\sigma_{v_1}]\cdots,\sigma_{v_i}]-\sigma_{q_{i}(v_1\odot\cdots\odot v_i)})\]
(more explicitly $s\operatorname{Ad}_i(v_1\odot\cdots\odot v_i)_k(v_{i+1}\odot\cdots\odot v_{i+k})=q_{i+k}(v_1\odot\cdots\odot v_{i+k})$ for every $k\geq1$, where we denote by $s: s^{-1}\Coder(\overline{SA})\rh\Coder(\overline{SA})$ the shift map). This is the $L_\infty[1]$ generalization of the adjoint morphism of a dgla introduced by Chuang and Lazarev in~\cite{ChLaz}. \end{remark}
As a consequence of Corollary \ref{cor.homabelian} and some elementary homological algebra, we obtain the following proposition, which could also be deduced by the results in \cite{ChLaz}.
\begin{proposition}\label{homabcrit} Let $(V,Q)$ be a $L_\infty[1]$ algebra. If the evaluation morphism
\[ e:(\Coder(SV),[Q,\cdot])\rh (V,q_1):R\rh R(1)= r_0(1)\]
admits a dg right inverse, then the $L_\infty[1]$ algebra $(V,Q)$ is homotopy abelian.\end{proposition}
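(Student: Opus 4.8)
The plan is to deduce this from Corollary~\ref{cor.homabelian} by working inside the Chuang--Lazarev framework set up in Example~\ref{ex-chuanglazarev}. There one identifies $V$ with the abelian graded Lie subalgebra $\{\sigma_v:v\in V\}$ of $M:=\Coder(SV)$, with complementary subalgebra $L:=\Coder(\overline{SV})$ and with the projection $P:M\to A=V$ equal to the evaluation morphism $e:R\mapsto R(1)=r_0(1)$. The degree one square-zero derivation producing the transferred structure is the inner derivation $D:=[Q,\cdot]$, which lies in $\Der(M/L)$ since $Q\in\Coder^1(\overline{SV})=L$; and, as recorded in Example~\ref{ex-chuanglazarev} (using Remark~\ref{remark5.2}), the resulting higher derived brackets reproduce $Q$ itself, so that $\Phi(D)=Q$ and hence $(A,\Phi(D))=(V,Q)$. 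Thus it suffices to check that the hypothesis of Corollary~\ref{cor.homabelian} holds for this data.

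The key observation is that under this identification the projection $P$ is exactly the evaluation $e$, and its target complex $(A,PD)$ is exactly $(V,q_1)$, since $PD=\Phi(D)_1=q_1$. A dg right inverse $\gamma:(V,q_1)\to(\Coder(SV),[Q,\cdot])$ to $e$, which is precisely what the hypothesis provides, therefore descends to cohomology as a right inverse $H(\gamma)$ to $H(e)=H(P)$; in particular $H(P):H(M,D)\to H(A,PD)$ is surjective. This is exactly the (equivalent) form of the hypothesis of Corollary~\ref{cor.homabelian}, so that corollary immediately yields that $(A,\Phi(D))=(V,Q)$ is homotopy abelian, as desired.

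In this argument there is no substantial obstacle; the only points demanding care are the identifications of the first paragraph --- that the evaluation morphism $e$ of the present statement coincides with the projection $P$ of the Chuang--Lazarev setup, and that $\Phi([Q,\cdot])=Q$ --- together with the elementary homological fact, underlying the parenthetical equivalence in Corollary~\ref{cor.homabelian}, that along the short exact sequence of dg spaces \eqref{exsequence2} the surjectivity of $H(P)=H(e)$ is equivalent to the injectivity of $H(i):H(\Coder(\overline{SV}),[Q,\cdot])\to H(\Coder(SV),[Q,\cdot])$, via the vanishing of the connecting homomorphisms in the associated long exact cohomology sequence. Once these matchings are in place the conclusion is immediate.
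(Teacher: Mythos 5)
Your proof is correct and follows essentially the same route as the paper: Proposition~\ref{homabcrit} is there deduced from Corollary~\ref{cor.homabelian} applied to the identifications of Example~\ref{ex-chuanglazarev} ($M=\Coder(SV)$, $L=\Coder(\overline{SV})$, $A=\sigma(V)$, $P=e$, $D=[Q,\cdot]$, $\Phi(D)=Q$), which is exactly your argument. Your write-up simply makes explicit the \lq\lq elementary homological algebra\rq\rq\ the paper leaves implicit, namely that a dg right inverse to $e$ forces $H(e)=H(P)$ to be surjective, equivalently $H(i)$ to be injective via the long exact sequence of \eqref{exsequence2}.
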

\begin{remark} The converse of the above proposition is also true, so the hypothesis is a necessary and sufficient condition for a $L_\infty[1]$ algebra $(V,Q)$ to be homotopy abelian, cf. Theorem 3.6 in \cite{FoKBopLA}. We notice that among the other equivalent conditions stated there, one could add the vanishing of the map induced by the adjoint $H(\operatorname{Ad}_1):H(V,q_1)\rh H_{\overline{CE}}(V,V)[1]$ from the tangent cohomology to the reduced Chevalley-Eilenberg cohomology of $(V,Q)$ with coefficients in itself, where the $L_\infty[1]$ adjoint morphism $\operatorname{Ad}$ is defined as in the previous remark.\end{remark}
\end{example}

\begin{example}\label{ex.Poisson} Higher derived brackets have been applied in the study of coisotropic deformations, cf. \cite{Sch,CS08,FrZam1}. Let $X$ be a differentiable manifold, and let $TX$ be the tangent bundle: we denote by $\sV^{\ast}_X=\Gamma(\bigwedge^{\ast} TX)$ the Gerstenhaber algebra of polyvector fields on $X$, equipped with the Schouten-Nijenhuis bracket $[\cdot,\cdot]_{SN}:\sV^{i}_X\ten\sV^j_X\rh\sV^{i+j-1}_X$. Recall that a Poisson structure on $X$ is the datum of a bivector $\pi\in\sV^2_X$ such that $[\pi,\pi]_{SN}=0$: as well known (cf. for instance \cite{BM}, Section 3) this induces a Poisson bracket $\{\cdot,\cdot\}_{\pi}$ on the algebra $A(X)$ of smooth functions on $X$, a dgla structure $(s^{-1}\sV^{\ast}_X,[\pi,\cdot]_{SN},[\cdot,\cdot]_{SN})$ on the desuspension $s^{-1}\sV^{\ast}_X$, and finally an anchor map $\pi^{\#}:T^{\ast}X\rh TX$ given by contraction with $\pi$.

Let $Z\subset X$ be a closed smooth submanifold, recall that $Z$ is coisotropic if the vanishing ideal $I(Z)\subset A(X)$ is $\{\cdot,\cdot\}_{\pi}$-closed, equivalently, if $\pi^{\#}(N^{\ast}Z)\subset TZ$, where $N^{\ast}Z\subset T^{\ast}X$ is the annihilator of $TZ$. Let $NZ$ be the normal bundle of $Z$ in $X$, $\sN^{\ast}_{Z|X}=\Gamma(\bigwedge^{\ast} NZ)$. Restriction to $Z$ followed by projection induces an algebra epimorphism $\sV^{\ast}_X\rh\sN^{\ast}_{Z|X}$: let $\sL^{\ast}_Z\subset\sV^{\ast}_X$ be defined by the exact sequence
\begin{equation}\label{exsequencepoisson} 0\rh\sL^{\ast}_Z\rh\sV^{\ast}_X\rh\sN^{\ast}_{Z|X}\rh0. \end{equation}
As in \cite{BM}, Proposition 5.2, $\sL^{\ast}_Z$ is a Gerstenhaber subalgebra of $\sV^{\ast}_X$, and $Z$ is coisotropic if and only if $\pi\in\sL^2_Z$. When $X$ is the total space of a vector bundle on $Z$ (and $Z$ is embedded as the zero section) the above sequence admits a natural splitting, whose desuspension sends $s^{-1}\sN^{\ast}_{Z|X}$ onto the abelian Lie subalgebra of $s^{-1}\sV^{\ast}_X$ consisting of vertical polyvector fields constant along the fibers, cf. for instance \cite{CS08,Sch}: in general one reduces to this situation via the choice of an embedding of $NZ$ onto a tubular neighborhood of $Z$ in $X$. For a coisotropic $Z$ the higher derived brackets $\Phi(\pi)=\Phi([\pi,\cdot]_{SN})$ induce a $L_{\infty}[1]$ structure on $s^{-1}\sN^{\ast}_{Z|X}$: moreover, it follows from Proposition \ref{corollaryCS} that the resulting $L_\infty[1]$ algebra is independent from the involved choices up to isomorphism, obtaining a result already proved in \cite{CS08}.

It is known \cite{Sch} that the $L_\infty[1]$ algebra $(s^{-1}\sN^{\ast}_{Z|X},\Phi(\pi))$ governs the functor of infinitesimal coisotropic deformations of $Z$ in $X$ (via the associated deformation functor, cf. \cite{Man,FMcone,BM}): as weakly equivalent $L_\infty[1]$ algebras determine the same deformation functor, Theorem \ref{ThoeremHDBvshomotopyfiber} implies the following
\begin{theorem} Let $(X,\pi)$ be a differentiable Poisson manifold, $Z\subset X$ a coisotropic submanifold, then the homotopy fiber of the inclusion of dglas
\[ i:(s^{-1}\sL^{\ast}_Z,[\pi,\cdot]_{SN},[\cdot,\cdot]_{SN})\rh(s^{-1}\sV^{\ast}_X,[\pi,\cdot]_{SN},[\cdot,\cdot]_{SN}) \]
governs the functor of infinitesimal coisotropic deformations of $Z$ in $X$.\end{theorem}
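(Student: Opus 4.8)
The plan is to obtain this statement as a direct corollary of Theorem~\ref{ThoeremHDBvshomotopyfiber}, combined with the two facts recalled just above it: that the $L_\infty[1]$ algebra $(s^{-1}\sN^{\ast}_{Z|X},\Phi(\pi))$ governs the functor of infinitesimal coisotropic deformations (from \cite{Sch}), and that weakly equivalent $L_\infty[1]$ algebras induce isomorphic deformation functors (cf. \cite{Man,FMcone,BM}). First I would set up the algebraic data required by Theorem~\ref{ThoeremHDBvshomotopyfiber}: take $M=s^{-1}\sV^{\ast}_X$ with its dgla structure, the derivation $D=[\pi,\cdot]_{SN}$, the sub dgla $L=s^{-1}\sL^{\ast}_Z$ cut out by the exact sequence \eqref{exsequencepoisson}, and $A=s^{-1}\sN^{\ast}_{Z|X}$ realized as the abelian Lie subalgebra of vertical polyvector fields constant along the fibers, via the splitting of \eqref{exsequencepoisson} associated to a tubular neighborhood.

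Next I would check that these data satisfy the hypotheses of Theorem~\ref{ThoeremHDBvshomotopyfiber}. The operator $D=[\pi,\cdot]_{SN}$ is a degree one derivation of $M$, and $D^2=0$ follows from $[\pi,\pi]_{SN}=0$ together with the graded Jacobi identity for $[\cdot,\cdot]_{SN}$; moreover $D(L)\subset L$ precisely because $Z$ is coisotropic, i.e.\ $\pi\in\sL^2_Z$ and $\sL^{\ast}_Z$ is a Gerstenhaber subalgebra. Since $A$ is an abelian Lie subalgebra, Proposition~\ref{abeliancase} identifies the brackets of Definition~\ref{def.HDB} with Voronov's original higher derived brackets, so that the transferred $L_\infty[1]$ structure $\Phi(D)$ is exactly $\Phi(\pi)$. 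Theorem~\ref{ThoeremHDBvshomotopyfiber} then yields that $(s^{-1}\sN^{\ast}_{Z|X},\Phi(\pi))$ is a weak model for the homotopy fiber $K_i$ of the inclusion $i$.

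To conclude I would pass to deformation functors. Since the associated deformation functor is invariant under weak equivalence of $L_\infty[1]$ algebras, the homotopy fiber $K_i$ and $(s^{-1}\sN^{\ast}_{Z|X},\Phi(\pi))$ induce the same functor; and by \cite{Sch} the latter is the functor of infinitesimal coisotropic deformations of $Z$ in $X$. This proves the theorem.

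The substance of the argument is carried entirely by Theorem~\ref{ThoeremHDBvshomotopyfiber}, which is already established, so there is no genuinely new obstacle here; the points that require care are of a bookkeeping nature. The first is the reduction, via an embedding of $NZ$ onto a tubular neighborhood, to the case where $X$ is the total space of a vector bundle, so that \eqref{exsequencepoisson} genuinely splits with image an abelian Lie subalgebra, together with the verification (supplied by Proposition~\ref{corollaryCS}) that the resulting $L_\infty[1]$ algebra, and hence the governed functor, is independent of this choice up to isomorphism. The second is the appeal to homotopy invariance of the deformation functor, which is standard.
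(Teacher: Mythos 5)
Your proposal is correct and follows essentially the same route as the paper: the paper also deduces the theorem directly from Theorem~\ref{ThoeremHDBvshomotopyfiber} (with $M=s^{-1}\sV^{\ast}_X$, $L=s^{-1}\sL^{\ast}_Z$, $D=[\pi,\cdot]_{SN}$, and $A=s^{-1}\sN^{\ast}_{Z|X}$ embedded via the tubular-neighborhood splitting of \eqref{exsequencepoisson}), invoking Sch\"atz's identification of $(s^{-1}\sN^{\ast}_{Z|X},\Phi(\pi))$ as governing coisotropic deformations, homotopy invariance of the associated deformation functor, and Proposition~\ref{corollaryCS} for independence of the choices. Your extra care in checking $D^2=0$ from $[\pi,\pi]_{SN}=0$, $D(L)\subset L$ from coisotropy, and the match with Voronov's brackets via Proposition~\ref{abeliancase} is exactly the bookkeeping the paper leaves implicit.
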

The same result could have been proved by the methods of \cite{BM}.
\end{example}

\begin{example} Let $(V,d)$ be a dg space and let $W\subset V$ be a dg subspace, let $A\subset V$ be a complement to $W$ in $V$, thus $V=W\oplus A$ as graded space; we denote by $P:V\rh A$ the projection with kernel $W$ and by $P^\bot:=\id_V-P$. The map $\widetilde{P}:\End(V)\rh\End(V):f\rh PfP^\bot$ is a projection with kernel $\End(V/W)$, the graded Lie subalgebra of $\End(V)$ consisting of those $f:V\rh V$ such that $f(W)\subset W$, and image canonically isomorphic to $\Hom(W,A)$: we notice that $f$ lies in $\operatorname{Im}\,\widetilde{P}$ if and only if $f=Pf=fP^\bot=PfP^\bot$, thus if $f,g\in\operatorname{Im}\,\widetilde{P}$, then $fg=fP^\bot Pg=0$ and we see that $\operatorname{Im}\,\widetilde{P}$ is an abelian Lie subalgebra of $\End(V)$. By hypotheses $d$ is a Maurer Cartan element of $\End(V/W)$, hence it induces via higher derived brackets a $L_\infty[1]$ structure $\Phi(d)=\Phi([d,\cdot])$ on $\Hom(W,A)$. The linear bracket is $\Phi(d)_1(f)(w)=Pdf(w)-(-1)^{|f|}f(dw)$, which is the induced differential on $\Hom((W,d),(A,Pd))$. Next we observe that if $f,g\in\operatorname{Im}\,\widetilde{P}$ then
\[[[d,f],g]=(-1)^{|f|+1}\left(fdg-(-1)^{(|f|+1)(|g|+1)}gdf\right)=(-1)^{|f|+1}P\left(fdg-(-1)^{(|f|+1)(|g|+1)}gdf\right)P^\bot\]
thus $[[d,f],g]\in \operatorname{Im}\,\widetilde{P}$, which implies $\Phi(d)_i=0$ for $i\geq3$. Finally the above computation shows
\[\Phi(d)_2(f\odot g)(w)=(-1)^{|f|+1}\left(f(P^\bot dg(w))-(-1)^{(|f|+1)(|g|+1)}g(P^\bot df(w))\right)\]
Via d\'ecalage, it is defined a dgla structure on $\Hom(W,A)[-1]$, weakly equivalent to the homotopy fiber of the inclusion of dglas $(\End(V/W),[d,\cdot],[\cdot,\cdot])\rh(\End(V),[d,\cdot],[\cdot,\cdot])$: by homotopy invariance and results of Fiorenza and Manetti (\cite{FMperiod}, cf. also the related \cite{FMperiod2}), the associated deformation functor is the dg Grassmann functor controlling the infinitesimal embedded deformations of the subcomplex $(W,d)$ in $(V,d)$ (modulo an opportune Gauge equivalence relation, cf. \cite{FMperiod} for details). Finally, in this case the $L_\infty$ morphism \eqref{morfismo} is a strict morphism of dglas
\[\Hom(W,A)[-1]\rh \End(V/W): f\rh [d,f]-P[d,f]P^\bot=[P^\bot dP,f].\]

\end{example}

\begin{example}\label{ex-koszulbrackets} Let $A$ be a unital associative graded algebra, we denote by $A_{L}$ the corresponding graded Lie algebra with the commutator bracket, and by $A_J$ the corresponding graded Jordan algebra with Jordan product $a\circ b=\frac{1}{2}(ab+(-1)^{|a||b|}ba)$. We identify $A_L$ with a Lie subalgebra of $\End(A)$ via the embedding $\bl:A_{L}\rh\End(A):a\rh\bl_a$, where $\bl_a$ is the operator of left multiplication by $a$. There is a projection $P:\End(A)\rh A_L:f\rh\bl_{f(1)}$, whose kernel $L=\{f\in\End(A) \mbox{\,\,s.t.\,\,} f(1) = 0 \}$ is a graded Lie subalgebra of $\End(A)$. We are in the set up of Section \ref{sectionHDB}, so higher derived brackets define a morphism of graded Lie algebras $\Phi:\End(A)\rh\Coder(SA):f\rh\Phi(f)$. When $A$ is graded commutative the brackets defined in this way are the usual higher Koszul brackets $\sK(f)_i$ on $A$ associated to $f$ \cite{koszul,voronov}, in the non commutative case we expect to recover the hierarchy  of nonabelian higher Koszul brackets introduced by Bering in \cite{Bering}, Section 3.

To get a taste of how do the just defined brackets look like in the non commutative case, one can easily verify that $\Phi(f)_1(a)=f(a)-\frac{1}{2}(f(1)a+(-1)^{|a||f|}af(1))=f(a)-f(1)\circ a$ for every $a\in A$; as another example suppose $f\in\End(A)$ is such that $f(1)=0$, then it can be checked that $\Phi(f)_2(a\odot b)=f(a\circ b)-f(a)\circ b - (-1)^{|a||f|}a\circ f(b)$.
\end{example}

\begin{remark}\label{rem-differentialoperators} We could define a family of subspaces $D_k=\{ f\in\End(A) \mbox{\,\,s.t.\,\,}\Phi(f)_{i}=0,\,\, \forall\,i>k\}$ for $k\geq0$: since $\Phi$ is a morphism of Lie algebras the identity $[D_i,D_j]\subset D_{i+j-1}$ follows immediately, in particular $\bigcup_{k\geq0} D_k$ is a graded Lie subalgebra of $\End(A)$. When $A$ is graded commutative $D_k=\Diff_k(A)\subset\End(A)$ is the subspace of differential operators of order $\leq k$ on $A$: it is not clear to the author whether the $D_k$ define interesting classes of operators in the non commutative case as well, perhaps a more meaningful construction could be made along the lines of the next remark.\end{remark}

\begin{remark} It should be possible to construct higher derived operations in different operadic contexts using homotopy transfer along the lines of this paper. For instance, let $B$ be a graded associative algebra with a splitting, as a graded space, in the direct sum $B=A\oplus C$ of graded subalgebras $A$ and $C$, then for every $D\in\Der^1(B/A)$ such that $D^2=0$ it is induced, via homotopical transfer of structure from the homotopy fiber of $i:(A,D,\cdot)\rh (B,D,\cdot)$, an $A_\infty[1]$ structure on $C$, that is, a squaring to zero degree one coderivation on the reduced tensor coalgebra $\overline{TC}$ over $C$: then we could define the higher derived products $\Phi(D)_i:C^{\otimes i}\rh C$ on $C$ associated to $D$ as the Taylor coefficients of this structure. A case of interest should be when $A\subset B$ is a left ideal (this is the situation considered in \cite{Ku}), for instance in the setting of the previous example.\end{remark}

\begin{example}\label{ex-bvinfinity} The author is grateful to M. Manetti for pointing out and carefully explaining to him the following example.
\begin{definition} Let $k$ be a fixed odd integer, a commutative $BV_\infty$ algebra $(A,\Delta_0,\Delta_1,\ldots,\Delta_i,\ldots)$ of degree $k$ consists of the following data (\cite{Krav}, cf. also \cite{BrLaz}):
\begin{itemize}
\item[(1)] a commutative dga $(A,\Delta_0)$ with 1, and
\item[(2)] for every $i\geq1$ a differential operator $\Delta_i\in\Diff^{1-n(k+1)}_{i+1}(A)$ on $A$ of degree $1-n(k+1)$ and order $\leq i+1$ (cf. remark \ref{rem-differentialoperators}), such that
\item[(3)]  $\Delta_i(1)=0$ for every $i\geq0$, and
\item[(4)]   if we denote by $t$ a central variable of (even) degree $k+1$ then the degree one operator
\[ \Delta = \Delta_0 + t\Delta_1 + \cdots + t^i\Delta_i + \cdots \]
on the algebra of formal power series $A[[t]]$ (that is $\Delta(\sum_j a_j\cdot t^j):= \sum_{i,j} \Delta_i(a_j)\cdot t^{i+j}$) squares to zero.
\end{itemize}\end{definition}
There is an associated $L_\infty[1]$ structure on $A[k+1]$ as we now describe. Consider the algebra of formal Laurent series $A((t))=\bigcup_{j\in\Z} t^{j}A[[t]]$, we denote by $p_+:A((t))\rh A[[t]]$ the natural projection, by $A((t))^-$ its kernel and by $p_-=\id_{A((t))}-p_+:A((t))\rh A((t))^-$ . The graded Lie algebra $M=\End( A((t)) )$ splits as $M=L\oplus B$, where $L=\{f\in M\mbox{ s.t. } f(1)\in A[[t]]\}$ and $B\subset M$ is the abelian Lie subalgebra of operators of left multiplication by elements in $A((t))^-$. Obviously $L\subset M$ is a Lie subalgebra, moreover, $\Delta:A[[t]]\rh A[[t]]$ extends by $\K((t))$-linearity to $\Delta:A((t))\rh A((t))$, which is a Maurer-Cartan element in $L$: we are in the set up of Voronov's construction of higher derived brackets, so $\Phi(\Delta)$ defines a $L_\infty[1]$ structure on $B$.
\begin{proposition} Let $\bi:A[k+1]\rh B $ be the linear embedding sending $a$ to the operator of left multiplication by $a\cdot t^{-1}$, then $\bi(A[k+1])$ is a $L_\infty[1]$ subalgebra of $(B,\Phi(\Delta))$. The induced $L_\infty[1]$ algebra structure on $A[k+1]$ is $(A[k+1],\Delta_0,\sK(\Delta_1)_2,\ldots,\sK(\Delta_{i-1})_i,\ldots)$, where $\sK(f)_i$ denotes the $i$-th Koszul bracket on $A$ associated to $f\in\End(A)$ (cf. Example \ref{ex-koszulbrackets}).
\end{proposition}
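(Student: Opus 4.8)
The plan is to reduce both statements to a single nested-commutator computation and then read them off simultaneously. Since $B$ is an abelian Lie subalgebra of $M=\End(A((t)))$ and $\Delta\in L$ (indeed $\Delta(1)=\sum_{m}t^m\Delta_m(1)=0\in A[[t]]$), Remark~\ref{remark5.2} lets me identify $\Phi(\Delta)=\Phi([\Delta,\cdot])$, and Proposition~\ref{abeliancase} gives the closed form
\[ \Phi(\Delta)_i(b_1\odot\cdots\odot b_i)=P[\cdots[[\Delta,b_1],b_2]\cdots,b_i],\qquad i\geq1, \]
where $P:M\rh B$ is the projection with kernel $L$. The first step is to make $P$ explicit: for $g\in M$ one has $g(1)\in A((t))$, and $g-\bl_{p_-(g(1))}$ sends $1$ into $A[[t]]$ while $\bl_{p_-(g(1))}\in B$; hence $P(g)=\bl_{p_-(g(1))}$, left multiplication by the strictly negative $t$-part of $g(1)$.

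Next I substitute $\bi(a_j)=\bl_{a_jt^{-1}}=t^{-1}\bl_{a_j}$. As $k$ is odd, $t$ and $t^{-1}$ have even degree $\pm(k+1)$ and are central in $M$, so they commute past every bracket without Koszul signs, and $\Delta=\sum_{m\geq0}t^m\Delta_m$ with each $t^m$ central and $\Delta_m$ the $\K((t))$-linear extension of an honest operator on $A$. Pulling the $i$ central factors $t^{-1}$ out of the nested commutator and expanding $\Delta$ yields
\[ [\cdots[[\Delta,\bi(a_1)],\bi(a_2)]\cdots,\bi(a_i)]=\sum_{m\geq0}t^{m-i}[\cdots[[\Delta_m,\bl_{a_1}],\bl_{a_2}]\cdots,\bl_{a_i}], \]
where the inner commutators are taken in $\End(A)$. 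Evaluating at $1$ and using $\Delta_m(1)=0$, these inner commutators-at-$1$ are precisely the higher Koszul brackets of Example~\ref{ex-koszulbrackets} (which need no symmetrization, the nested brackets being automatically graded symmetric), so
\[ \big([\cdots[[\Delta,\bi(a_1)]\cdots,\bi(a_i)]\big)(1)=\sum_{m\geq0}t^{m-i}\,\sK(\Delta_m)_i(a_1\odot\cdots\odot a_i)\in t^{-1}A[[t]]. \]

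Now I extract the principal part. Since $\Delta_m\in\Diff_{m+1}(A)$ has order $\leq m+1$, its $(m+2)$-fold commutator with left multiplications vanishes, so $\sK(\Delta_m)_i=0$ for $m<i-1$; the surviving summands have $m\geq i-1$, and those with $m\geq i$ contribute powers $t^{m-i}\in A[[t]]=\operatorname{Ker} p_-$. Hence the unique strictly negative contribution comes from $m=i-1$, and applying $P=\bl_{p_-((\cdot)(1))}$ gives
\[ \Phi(\Delta)_i(\bi(a_1)\odot\cdots\odot\bi(a_i))=\bl_{t^{-1}\sK(\Delta_{i-1})_i(a_1\odot\cdots\odot a_i)}=\bi\big(\sK(\Delta_{i-1})_i(a_1\odot\cdots\odot a_i)\big). \]
This one identity proves both claims at once: the right hand side lies in $\bi(A[k+1])$, so this subspace is closed under every bracket $\Phi(\Delta)_i$ and is an $L_\infty[1]$ subalgebra; and, transporting along the linear isomorphism $\bi$, the induced $i$-th bracket is $\sK(\Delta_{i-1})_i$. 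For $i=1$ this reads $\sK(\Delta_0)_1(a)=[\Delta_0,\bl_a](1)=\Delta_0(a)$, recovering $\Delta_0$ as the first bracket, as claimed.

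I expect the difficulty to be entirely in the bookkeeping rather than the conceptual structure. One should first check that $\Delta$ is a well-defined endomorphism of $A((t))$ (each $t$-coefficient of $\Delta(x)$ is a finite sum, since elements of $A((t))$ are bounded below in $t$-degree) and that the series above lies in $t^{-1}A[[t]]$, so that $p_-$ and $P$ are meaningful; the even degree of $t$ is exactly what keeps the $L_\infty[1]$ Koszul signs aligned with those built into $\sK(\cdot)_i$, and a short check confirms $\bi$ preserves degrees from $A[k+1]$ to $B$. The single load-bearing input is the order estimate $\Delta_m\in\Diff_{m+1}$: it is what forces every coefficient except the $t^{-1}$ one into $\operatorname{Ker} p_-$, hence what makes the answer finite and equal to a single Koszul bracket. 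Everything else reduces to verifying that signs and degrees match the chosen $L_\infty[1]$ normalization after d\'ecalage.
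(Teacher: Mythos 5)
Your proof is correct and follows essentially the same route as the paper's: both compute the nested commutator $[\cdots[\Delta,\bi(a_1)]\cdots,\bi(a_i)](1)$, expand $\Delta=\sum_m t^m\Delta_m$ to identify the $t^{m-i}$-coefficients with the Koszul brackets $\sK(\Delta_m)_i$, and then combine the projection onto strictly negative powers of $t$ with the order constraint $\Delta_m\in\Diff_{m+1}(A)$ (which kills $\sK(\Delta_m)_i$ for $m<i-1$) to isolate the single surviving term $m=i-1$. Your write-up is somewhat more explicit than the paper's (making $P(g)=\bl_{p_-(g(1))}$ precise, noting the centrality of $t$ and the identification $\Phi(\Delta)=\Phi([\Delta,\cdot])$ via Remark~\ref{remark5.2} and Proposition~\ref{abeliancase}), but these are details the paper leaves implicit rather than a different argument.
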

\begin{proof} A straightforward computation shows
\[ [\cdots[\Delta,\bi(a_1)]\cdots,\bi(a_i)](1) = [ [\Delta_0 + t\Delta_1 + \cdots + t^j\Delta_j + \cdots,\,\,\bl_{a_1\cdot t^{-1}} ]\cdots,\,\, \bl_{a_i\cdot t^{-1}}](1) = \]
\[ = \sum_{j\geq0} [\cdots[\Delta_j,\bl_{a_1}]\cdots,\bl_{a_i}](1)\cdot t^{j-i} = \sum_{j\geq 0}\sK(\Delta_j)_i(a_1\odot\cdots\odot a_i)\cdot t^{j-i}\]
by definition of the Koszul brackets, thus $\Phi(\Delta)_i(\bi(a_1)\odot\cdots\odot\bi(a_i))$ is left multiplication with $\sum_{j=0}^{i-1}\sK(\Delta_j)_i(a_1\odot\cdots\odot a_i)\cdot t^{j-i}$. Assumption (2) in the definition of a commutative $BV_\infty$ algebra implies that $\sK(\Delta_j)_i=0$ if $j<i-1$, so we see that
\[\Phi(\Delta)_i(\bi(a_1)\odot\cdots\odot\bi(a_i))=\bi(\sK(\Delta_{i-1})_i(a_1\odot\cdots\odot a_i))\]. \end{proof}
\begin{definition} A commutative $BV_\infty$ algebra has the degeneration property if the projection $(A[[t]],\Delta)\rh (A,\Delta_0):a(t)\rh a(0)$ is surjective in homology. \end{definition}
The following theorem, which was proved with different methods by Braun and Lazarev in \cite{BrLaz}, generalizes the formality theorem from \cite{Lieformality}. We follow the proof of this last result given in \cite{iacono}, Theorem 6.6.
\begin{theorem} If a commutative $BV_\infty$ algebra $(A,\Delta_0,\Delta_1,\ldots,\Delta_i,\ldots)$ of (odd) degree $k$ has the degeneration property, then the $L_\infty[1]$ algebra $(A[k+1],\Delta_0,\sK(\Delta_1)_2,\ldots,\sK(\Delta_{i-1})_i,\ldots)$ is homotopy abelian.
\end{theorem}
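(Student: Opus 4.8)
The plan is to obtain homotopy abelianity for the large Voronov algebra $(B,\Phi(\Delta))$ from Corollary~\ref{cor.homabelian}, and then to transport it to the $L_\infty[1]$ subalgebra $\bi(A[k+1])$. Recall the datum: $M=\End(A((t)))$, $L=\{f:f(1)\in A[[t]]\}$, and $B=\operatorname{Im}P$ with $P(f)=\bl_{p_-(f(1))}$; here $L$ is a graded Lie subalgebra stable under $D=\ad_\Delta$ (because $\Delta(1)=0$ and $\Delta(A[[t]])\subseteq A[[t]]$), $B$ is abelian since $A((t))$ is commutative, and $D^2=\ad_{\Delta^2}=0$. Since $\Delta\in L$ we have $\Phi(\Delta)=\Phi(\ad_\Delta)$ by Remark~\ref{remark5.2}, so by Theorem~\ref{ThoeremHDBvshomotopyfiber} the algebra $(B,\Phi(\Delta))$ is a weak model for the homotopy fiber of $i:(L,\ad_\Delta)\rh(M,\ad_\Delta)$. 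By Corollary~\ref{cor.homabelian} it therefore suffices, for the homotopy abelianity of $B$, to prove that $H(P):H(M,\ad_\Delta)\rh H(B,\Phi(\Delta)_1)$ is surjective, where the tangent differential $\Phi(\Delta)_1$ is carried by the isomorphism $B\cong A((t))^-$ to $\bar\Delta=p_-\Delta$, the differential induced by $\Delta$ on $A((t))/A[[t]]$.

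For this surjectivity I would factor $P$ through evaluation, $M\xrightarrow{\mathrm{ev}_1}A((t))\xrightarrow{p_-}A((t))^-$, $f\mapsto p_-(f(1))$. The map $\mathrm{ev}_1$ is a split surjection of complexes: since $\K((t))$ is a field and $[1]\neq0$ (the case $[1]=0$ forces $A$ acyclic, where the statement is vacuous), there is a $\K((t))$-linear augmentation $\epsilon:A((t))\rh\K((t))$ of degree zero with $\epsilon(1)=1$ and $\epsilon\Delta=0$, and $z\mapsto(x\mapsto\epsilon(x)z)$ is then a chain-level section of $\mathrm{ev}_1$; hence $H(\mathrm{ev}_1)$ is surjective. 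For $H(p_-)$ I would use that the degeneration property is exactly the $E_1$-degeneration of the $t$-adic spectral sequence of $(A[[t]],\Delta)$, which gives $H(A[[t]],\Delta)\cong H(A,\Delta_0)[[t]]$; in particular $H(A[[t]])$ is $t$-torsion free, so $H(A[[t]])\rh H(A((t)))$ is injective and the connecting homomorphism of the long exact sequence of $0\rh A[[t]]\rh A((t))\rh A((t))^-\rh0$ vanishes, making $H(p_-)$ surjective. Composing, $H(P)=H(p_-)\circ H(\mathrm{ev}_1)$ is surjective, so $(B,\Phi(\Delta))$ is homotopy abelian.

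To descend to $\bi(A[k+1])$ I would invoke the following elementary fact: if $j:(W,Q_W)\rh(V,Q_V)$ is an $L_\infty[1]$ morphism with $H(j_1)$ injective and $(V,Q_V)$ homotopy abelian, then $(W,Q_W)$ is homotopy abelian. Passing to minimal models, $(V,Q_V)$ becomes $(H(V),0)$ while $(W,Q_W)$ becomes some minimal $(H(W),\mu)$, and $j$ induces an $L_\infty[1]$ morphism $(H(W),\mu)\rh(H(V),0)$ whose linear part is $H(j_1)$; since the target is trivial and $\mu_1=0$, the $L_\infty[1]$ relations give $H(j_1)\,\mu_i=0$, hence $\mu_i=0$, by induction on $i\geq2$, so $W$ is homotopy abelian. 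I would apply this to the inclusion $\bi(A[k+1])\rh B$, which is an $L_\infty[1]$ subalgebra by the preceding proposition and whose tangent complex is $(At^{-1},\Delta_0)$. Under the degeneration property $H(B)\cong H(A((t))^-,\bar\Delta)\cong\bigoplus_{j\geq1}H(A,\Delta_0)t^{-j}$, and the inclusion identifies $H(A,\Delta_0)t^{-1}$ with the $j=1$ summand, so $H(j_1)$ is injective; the fact above then yields the homotopy abelianity of $(A[k+1],\Delta_0,\sK(\Delta_1)_2,\ldots)$.

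The main obstacle is the cohomological analysis of the middle step: one must correctly identify $H(A[[t]],\Delta)$, $H(A((t)),\Delta)$ and $H(A((t))^-,\bar\Delta)$ and isolate from the degeneration hypothesis the two precise inputs used, namely the $t$-torsion freeness of $H(A[[t]])$ (for $H(p_-)$) and the splitting off of the $j=1$ summand (for the descent). The argument is arranged to use only explicit chain-level sections and long exact sequences, so that no finiteness or completion hypothesis on $\End(A((t)))$ is needed; the descent fact, though routine via minimal models, is the conceptual crux, since $A[k+1]$ is patently not weakly equivalent to the homotopy-abelian model $B$.
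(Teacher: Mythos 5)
Your proposal is correct and follows essentially the same route as the paper's proof: homotopy abelianity of $(B,\Phi(\Delta))$ via Corollary \ref{cor.homabelian}, with $H(P)$ factored through $\operatorname{ev}_1$ and $p_-$, the degeneration property used precisely to make $H(A[[t]])\rh H(A((t)))$ injective (hence $H(p_-)$ surjective), and then descent along $\bi$ using injectivity of $H(\bi)$ into the homotopy abelian $(B,\Phi(\Delta))$. Your only deviations are cosmetic: you construct an explicit augmentation-based chain-level section of $\operatorname{ev}_1$ where the paper uses $H(\End(V))\cong\End(H(V))$ over a field, you phrase the torsion-freeness of $H(A[[t]])$ via $E_1$-degeneration where the paper chases the filtration $F^j=t^jA[[t]]$ directly, and you prove the descent lemma by minimal models where the paper cites \cite{KKP}, Proposition 4.11.
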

\begin{proof} Consider the decreasing filtration $A((t))=\bigcup_{j\in\Z} F^j = \bigcup_{j\in\Z}t^j A[[t]]$ by $\Delta$-closed subspaces: then the degeneration property is equivalent to injectivity in homology of $F^1\subset F^0$, which readily implies injectivity in homology of $F^j\subset F^{j-1}$ (by $\K((t))$-linearity of $\Delta$) and also of $F^j\subset F^k$ for every $j>k$. In particular $A[[t]]\rh A((t))$ is injective in homology, so $p_-:A((t))\rh A((t))^-$ is surjective in homology: in turn this also implies that the projection $P:M\rh B$ induced by the splitting $M=L\oplus B$ is surjective in homology, this follows by looking at the commutative diagram of dg spaces
\[\xymatrix{(M,[\Delta,\cdot])\ar[r]^P\ar[d]^{\operatorname{ev}_1} & (B,P[\Delta,\cdot])\ar[d]^{\operatorname{ev}_1}\\
(A((t)),\Delta)\ar[r]^-{p_-}& (A((t))^-,p_-\Delta)}\]
Since we are working over a field $H(\operatorname{ev}_1)=\operatorname{ev}_{[1]}:H(M)=\End(H(A((t))))\rh H(A((t)))$ is surjective, while $\operatorname{ev}_1:B\rh A((t))^-$ is an isomorphism. Thus the $L_\infty[1]$ structure $\Phi(\Delta)=\Phi([\Delta, \cdot])$ on $B$ is homotopy abelian by Corollary \ref{cor.homabelian}.

The thesis follows from \cite{KKP}, Proposition 4.11, if we show that the embedding $\bi:A[k+1]\rh B$ from the previous proposition is injective in homology. We look at the commutative diagram
\[ \xymatrix{0\ar[r]&F^0\ar@{=}[d]\ar[r] & F^{-1}\ar[d]\ar[r] & F^{-1}/F^0\ar[d]\ar[r]&0 \\0\ar[r]& A[[t]]\ar[r] & A((t))\ar[r] & A((t))^-\ar[r]&0 } \]
The rows are split exact and the middle vertical arrow is injective in homology, then so must be the right one: but this is isomorphic to $\bi:A[k+1]\rh B$.\end{proof}
\end{example}

\begin{example}\label{ex-getzler} Let $(L,D,[\cdot,\cdot])$ be a dgla, then $L=L^{\geq0}\oplus L^{<0}$, where $L^{\geq0}=\oplus_{i\geq0}L^i$ and $L^{<0}=\oplus_{i<0}L^i$: we notice that both $L^{\geq0}$ and $L^{<0}$ are Lie subalgebras of $L$ and $D\in\Der(L/L^{\geq0})$, thus we are in the set up of Section \ref{sectionHDB} and the higher derived bracket associated to $D$ induce a $L_\infty[1]$ structure $\Phi(D)$ on $L^{<0}$. The brackets are explicitly given, for $i\geq2$, by
\[ \Phi(D)_i(l_1\odot\cdots\odot l_i)=\sum_{\sigma\in S_i}\varepsilon(\sigma)\sum_{k=1}^{i}\frac{B_{i-k}}{k!(i-k)!}\overbrace{[\cdots[}^{i-k}P([\cdots[ Dl_{\sigma(1)},l_{\sigma(2)}]\cdots]),l_{\sigma(k+1)}]\cdots,l_{\sigma(i)}] =\]
\[ = \sum_{\sigma\in S_i}\varepsilon(\sigma)\left(\frac{B_{i-1}}{(i-1)!}[\cdots[PDl_{\sigma(1)},l_{\sigma(2)}]\cdots,l_{\sigma(i)}]+
\left(\sum_{k=2}^i\frac{B_{i-k}}{k!(i-k)!}\right)[\cdots[Dl_{\sigma(1)},l_{\sigma(2)}]\cdots,l_{\sigma(i)}]
\right) =\]
\[ = -\frac{B_{i-1}}{(i-1)!}\sum_{\sigma\in S_i}\varepsilon(\sigma)[\cdots[P^\bot Dl_{\sigma(1)},l_{\sigma(2)}]\cdots,l_{\sigma(i)}]\] 
where in the second identity we used the fact that for $k>1$ the $P$ becomes irrelevant, since it applies to an element already in $L^{<0}$, and in the third one we used the identity $\sum_{k=0}^{i-1}B_k\left(\begin{array}{c} i \\ k \end{array}\right)=0$ for $i\geq2$. We notice that $P^\bot D$ acts on $L^{<0}$ as $D$ on $L^{-1}$ and 0 elsewhere. The linear bracket is $\Phi(D)_1=PD$, which is $0$ on $L^{-1}$ and $D$ on $L^{<-1}$. These are essentially the same brackets as those introduced by Getzler in \cite{Getzler}. Finally, by Theorem \ref{ThoeremHDBvshomotopyfiber} we see that the strict $L_\infty[1]$ morphism  $L^{<0}\rh\Sigma^{-1}L^{\geq0}:l\rh s^{-1}P^\bot Dl $ fits into a homotopy fiber sequence $L^{<0}\rh\Sigma^{-1} L^{\geq0}\rh\Sigma^{-1}L$ of $L_\infty[1]$ algebras.
\end{example}

\end{document}